\def\makeheadbox{{%
\hbox to0pt{\vbox{\baselineskip=10dd\hrule\hbox
to\hsize{\vrule\kern3pt\vbox{\kern3pt
\hbox{\bfseries Draft for discussion }
\hbox{Date of this version: 13.04.21}
\kern3pt}\hfil\kern3pt\vrule}\hrule}%
\hss}}}
\def\cequiv{\raisebox{-1.5mm}{$\;\stackrel{\raisebox{-3.9mm}{=}}{{\sim}}\;$}}
\newtheorem{theorem}{Theorem}[section]
\newtheorem{remark}[theorem]{Remark}\newtheorem{proposition}[theorem]{Proposition}
\newtheorem{lemma}[theorem]{Lemma}
\newtheorem{definition}[theorem]{Definition}
\newcounter{mnote}
\let\oldmarginpar\marginpar
\renewcommand\marginpar[1]{\-\oldmarginpar[\raggedleft\footnotesize #1]
  {\raggedright\footnotesize #1}}
\numberwithin{equation}{section}
\setlist[enumerate]{nosep}
\begin{document}
\title{Lowest-degree robust finite element scheme for a fourth-order elliptic singular perturbation problem on rectangular grids}
\author{Huilan Zeng}
\address{LSEC, ICMSEC, Academy of Mathematics and System Sciences, Chinese Academy of Sciences, Beijing 100190, China; School of Mathematical Sciences, University of Chinese Academy of Sciences, Beijing 100049, China}
\email{zhl@lsec.cc.ac.cn}
\author{Chen-Song Zhang}
\address{LSEC, ICMSEC and NCMIS, Academy of Mathematics and System Sciences, Chinese Academy of Sciences, Beijing 100190, China}
\email{zhangcs@lsec.cc.ac.cn}
\author{Shuo Zhang}
\address{LSEC, ICMSEC and NCMIS, Academy of Mathematics and System Sciences, Chinese Academy of Sciences, Beijing 100190, China}
\email{szhang@lsec.cc.ac.cn}
\thanks{Zeng and C.-S. Zhang were partially supported by the Science Challenge Project TZZT2019-B1.1, the National Science Foundation of China 11971472, and the Key Research Program of Frontier Sciences of CAS. S. Zhang is partially supported by National Natural Science Foundation,  11471026 and 11871465, China.}
\subjclass[2000]{Primary 65N12, 65N15, 65N22, 65N30}

\keywords{robust optimal quadratic element, rectangular grids, singular perturbation problem, error analysis}

\maketitle

\begin{abstract}
In this paper, a piecewise quadratic nonconforming finite element method on rectangular grids for a fourth-order elliptic singular perturbation problem is presented.  This proposed method is robustly convergent with respect to the perturbation parameter.  Numerical results are presented to verify the theoretical findings. 
~\\

The new method uses piecewise quadratic polynomials, and is of the lowest degree possible. Optimal order approximation property of the finite element space is proved by means of a locally-averaged interpolation operator newly constructed.  This interpolator, however, is not a projection. Indeed, we establish a general theory and show that no locally defined interpolation associated with the locally supported basis functions can be projective for the finite element space in use. Particularly, the general theory gives an answer to a long-standing open problem presented in [Demko, J. Approx. Theory, \textbf{43}(2):151--156, 1985].
\end{abstract}

\section{Introduction}
\label{intro}
Let $\Omega \subset \mathbb{R}^{2}$ be a simply-connected polygon, which can be covered by a rectangular subdivision, and $f \in L^{2}(\Omega)$. In this paper, we consider the fourth-order elliptic singular perturbation problem:
\begin{equation}\label{eq:model problem}
\left\{
\begin{array}{rl}
 \varepsilon^{2} \Delta^{2}u -\Delta u = f, & \mbox{ in } \ \Omega , \\
u = \frac{\partial u}{\partial \mathbf{n}} = 0, & \mbox{ on } \ \partial\Omega ,
\end{array}
\right.
\end{equation}
where $\frac{\partial u}{\partial \mathbf{n}}$ denotes the normal derivative along the boundary $\partial \Omega$, and $0< \varepsilon \leq 1$ is a real parameter. This equation models, for example, thin buckling plates with $u$ representing the displacement of the plate \cite{Frank1997}. 
~\\

There have been two main approaches to obtain a robust finite element scheme for the model problem~\eqref{eq:model problem}: (i)~designing a finite element discretization works for both fourth-order and second-order problems; (ii)~modifying the variational formulation of the model problem.  The first approach can be further divided into three categories. The first category uses conforming finite elements, such as the Argyris element or Hsieh-Clough-Toucher element. Because of their requirement of higher degree polynomials or complicated macroelement techniques, they are sometimes considered less friendly to practical applications. The second category employs $H^{2}$ nonconforming and $H^{1}$ conforming elements \cite{Chen;Chen2014,Chen;Chen;Qiao2013,Chen;Chen;Xiao2014,Guzman;Leykekhman;Neilan2012,Nilssen;Tai;Winther2001,Tai;Winther2006,Wang;Wu;Xie2013,Wang;Shi;Xu2007,2ndWang;Shi;Xu2007,Xie;Shi;Li2010,Zhang;Wang2008}. The third category involves $H^{1}$ nonconforming elements \cite{Chen;Liu;Qiao2010,Chen;Zhao;Shi2005,Wang;Wu;Xie2013}. As for the second approach based on modified variational formulations, the fourth-order and second-order bilinear formulations are handled separately. It is well-known that the triangular Morley element does not converge for the Poisson equation \cite{Wang2001,Nilssen;Tai;Winther2001}, and thus is not uniformly convergent with respect to $\varepsilon$.  Modified triangular Morley element methods are proposed in \cite{Wang;Xu;Hu2006} and \cite{Wang;Meng2007} in two and three dimensions, respectively. Another example of the second approach is the $C^{0}$ interior penalty discontinuous Galerkin (IPDG) method. It is devised for problem~\eqref{eq:model problem} in \cite{Brenner;Neilan2011} and reanalyzed for a layer-adapted mesh in \cite{Franz;Roos;Wachtel2014}. Moreover, by adopting the IPDG formulation to dispose the Laplace operator, two Morley--Wang--Xu element methods with penalty are presented in \cite{Wang;Huang;Tang;Zhou2018}. 
~\\

In this paper, we apply the reduced rectangular Morley (RRM) element to the singular perturbation problem~\eqref{eq:model problem}. Inspired by a discretized Stokes complex given in \cite{Zhang.S2016nm}, the RRM element is firstly proposed in \cite{Shuo.Zhang2020} for $H^{2}$ problems and later analyzed in \cite{Zeng.H;Zhang.C;Zhang.S2019} for $H^{1}$ problems. Basically, it is an optimal quadratic element for both $H^{2}$ and $H^{1}$ problems, and in turn optimal for \eqref{eq:model problem}. More precisely, this method is convergent with a rate of $\mathcal{O}(h)$ order in the energy norm with respect to the regularity of the exact solution. Moreover, when $\varepsilon$ is very small relative to $h$, the convergence rate can be of asymptotically $\mathcal{O}(h^2)$ order on uniform grids. Further, the RRM element uses piecewise quadratic polynomials (total degree not bigger than 2) and it is of the lowest degree ever possible for the model problem. 
~\\

The RRM element can be viewed as a reduction of the rectangular Morley (RM) element.  The application of the RM element to $H^{2}$ and $H^{1}$ problems has been studied in \cite{Wang.M;Shi.Z2013mono} and \cite{XY.Meng;XQ.Yang;S.Zhang2016}, respectively. In \cite{Wang;Xu;Hu2006}, it is proved that the RM element is uniformly convergent for the singular perturbation problem based on a modified variational functional. Later in \cite{Wang;Wu;Xie2013}, the uniform convergence rate of the original RM element for \eqref{eq:model problem} is shown. For the RRM element space, the consistency error estimate can be obtained consequently. The main difficulty then lies in estimating the approximation error. Similar to the elements described in \cite{Fortin.M;Soulie.M1983,Park.C;Sheen.D2003,Shuo.Zhang2020} and in many spline-type methods \cite{Schumaker2007,WangRenhong2013}, the number of continuity restrictions of the RRM element function associated with a cell is greater than the dimension of the local polynomial space. This element can not be constructed in the formulation of Ciarlet's triple and does not yield a natural nodal interpolation operator, though it does admit a set of locally supported basis functions. In \cite{Zeng.H;Zhang.C;Zhang.S2019}, the approximation of the RRM element space in the broken $H^{1}$ norm is analyzed with the help of a regularity result which is valid for convex domains. In the present paper, we reconstruct the estimation for the broken $H^{2}$ and $H^{1}$ norms on convex and non-convex domains. Inspired by the construction of quasi-spline interpolation operators in the spline function theory (see, for example, \cite{Wang;Lu1998,Sablonniere.P2003,2Sablonniere.P2003}), we propose a locally-averaging operator which preserves $P_{2}$ polynomials locally and is stable in terms of relevant Sobolev norms. Consequently, optimal error estimate of the interpolation operator is established. This interpolation operator is suitable for any regions that can be subdivided into rectangles, and particularly an optimal estimation can be given for the RRM element space in the broken $H^{1}$ norm on non-convex domains.  Therefore, the convergence analysis of the RRM element for the model problem~\eqref{eq:model problem} robustly in $\varepsilon$ is follows. 
~\\

It is notable that the newly-designed interpolation operator is not a projection onto the RRM element space. This is not surprising as the RRM element space does not correspond to a finite element defined as Ciarlet's triple, and it looks more like a nonconforming spline space. Actually, due to the importance of and the convenience introduced by the locally-defined projective operator, to the best of our knowledge, it has been a long-standing open problem to figure out an condition for the existence of interpolations which are stable, projective and locally defined in high (more than one) dimension \cite[Remark 1]{Demko1985}. To understand the situation, we establish a theory to give a necessary and sufficient condition when both projection and locality properties are satisfied simultaneously for an interpolator. An application of this theory to the RRM element space indicates that there exists no local interpolation with the given locally-supported basis functions which preserves the RRM element space. 
~\\

The rest of the paper is organized as follows. In Section~\ref{sec:pre}, some preliminaries are given and the rectangular Morley element is revisited. In Section~\ref{sec:rrmscheme}, the reduced rectangular Morley element space is revisited, some properties of the basis functions are presented, and the approximation analysis is conducted based on a locally-averaging interpolation operator constructed therein. In Section~\ref{sec:convergence}, the convergence analysis of the RRM element for the model problem~\eqref{eq:model problem} robustly in $\varepsilon$ is provided. In Section~\ref{sec:numerical}, some numerical results are presented to verify our theoretical findings. Finally, in the appendix, a necessary and sufficient condition for constructing an interpolation with a projective property is proposed. The theory indicates that there does not exist any local interpolation operator which preserves the RRM element space.

\section{Preliminaries} 
\label{sec:pre}

\subsection{Notations}  
We use $\nabla$ and $\nabla^{2}$ to denote the gradient operator and Hessian, respectively. We use standard notation on Lebesgue and Sobolev spaces, such as $L^{p}(\Omega)$, $H^{s}(\Omega)$, and $H^{s}_{0}(\Omega)$.  Denote, by $H^{-s}(\Omega)$, the dual spaces of $H^{s}_0(\Omega)$. We utilize the subscript $``\cdot_h"$ to indicate the dependence on grids. Particularly, an operator with the subscript $``\cdot_h"$ implies the operation is done cell by cell. Finally, $\lesssim$, $\gtrsim$, and $\cequiv$ respectively denote $\leqslant$, $\geqslant$, and $=$ up to a generic positive constant \cite{J.Xu1992}, which  might depend on the shape-regularity of subdivisions, but not on the mesh-size~$h$  and the perturbation parameter $\varepsilon$.

Let $\big\{\mathcal{G}_h\big\}$ be in a family of rectangular grids of domain $\Omega$. Let $\mathcal{N}_h$ be the set of all vertices, $\mathcal{N}_h=\mathcal{N}_h^i\cup\mathcal{N}_h^b$, with $\mathcal{N}_h^i$ and $\mathcal{N}_h^b$ comprising the interior vertices and the boundary vertices, respectively. Similarly, let $\mathcal{E}_h=\mathcal{E}_h^i\bigcup\mathcal{E}_h^b$ be the set of all the edges, with $\mathcal{E}_h^i$ and $\mathcal{E}_h^b$ comprising the interior edges and the boundary edges, respectively. If none of the vertices of a cell is on $\partial\Omega$, we name it an interior cell, otherwise it is called a boundary cell. We use $\mathcal{K}_{h}^{i}$ and $\mathcal{K}_{h}^{b}$ for the set of interior cells and boundary cells, respectively. Let $ \mathring{\omega}$ denote the interior of the region $\omega$. We use symbol $\#$ for the cardinal number of a set. For an edge $e$, $\mathbf{n}_e$ is a unit vector normal to $e$ and $\boldsymbol{\tau}_e$ is a unit tangential vector of $e$ such that $\mathbf{n}_e\times \boldsymbol{\tau}_e>0$. On the edge $e$, we use $\llbracket\cdot\rrbracket_e$ for the jump across $e$. If $e\subset\partial\Omega$, then $\llbracket\cdot\rrbracket_e$ is the evaluation on $e$. The subscript ${\cdot}_e$ can be dropped when there is no ambiguity brought in.

Suppose that $K$ represents a rectangle with sides parallel to the two axis respectively. Let $\{X_{i}\}\big|_{i =  1:4}$ and $\{e_{i}\}\big|_{i =  1:4}$ denote the sets of vertices and edges of $K$, respectively. Let $c_{K}$ be the barycenter of $K$. Let $L_{K}$, $H_{K}$ be the length of $K$ in the $x$ and $y$ directions, respectively.  Let $h_{K}: = \max\{L_{K},H_{K}\}$ be the size of $K$, and $\rho_{K}$ be the inscribed circle radius.
Let $h := \max\limits_{K \in \mathcal{G}_{h}}h_{K}$ be the mesh size of $\mathcal{G}_{h}$. Let $P_l(K)$ denote the space of all polynomials on $K$ with the total degree no more than~$l$. Let $Q_l(K)$ denote  the space of all polynomials on $K$ of degree no more than~$l$ in each variable. Similarly, we define spaces $P_l(e)$ and $Q_l(e)$ on an edge~$e$.

In this paper, we assume that $\big\{\mathcal{G}_{h}\big\}$ is in a regular family of rectangular grids of domain $\Omega$, i.e.,
\begin{equation}\label{eq:regularity}
\max_{K\in \mathcal{G}_{h}}\frac{h_{K}}{\rho_{K}} \leq \gamma_{0},
\end{equation}
where $\gamma_{0}$ is a generic constant independent of $h$.  Such a mesh is actually locally quasi-uniform, and this helps for the stability analysis of the interpolation operator constructed in Section~\ref{sec:rrmscheme}.

\subsection{Model problem and nonconforming finite element approximation}
The weak form of the model problem \eqref{eq:model problem} is given by : Find $u\in V : = H^{2}_{0}(\Omega)$ satisfying 
\begin{align}
\varepsilon^{2}a(u,v) + b(u,v) = (f,v), \quad \forall v \in H^{2}_{0}(\Omega),
\end{align}
where 
\begin{align*}
a(u,v) = \int_{\Omega} \nabla^{2} u :  \nabla^{2} v {\,d x d y } \quad \mbox{and}  \quad b(u,v) = \int_{\Omega} \nabla u \cdot  \nabla v {\,d x d y }.
\end{align*}
Given an discrete space $V_{h}$ defined on $\mathcal{G}_{h}$, the discrete weak formulation corresponding to~\eqref{eq:model problem} reads as: Find $u_{h}\in V_{h}$, such that
\begin{align}\label{eq:discrete form}
\varepsilon^{2}a_{h}(u_{h},v_{h}) + b_{h}(u_{h},v_{h}) = (f,v_{h}), \quad \forall v_{h} \in V_{h},
\end{align}
where 
\begin{align*}
a_{h}(u_{h},v_{h}) = \sum_{K\in \mathcal{G}_{h}}\int_{K} \nabla^{2} u_{h} :  \nabla^{2} v_{h} {\,d x d y }   \quad \mbox{and}  \quad b_{h}(u_{h},v_{h}) =\sum_{K\in \mathcal{G}_{h}}\int_{K} \nabla u_{h} \cdot  \nabla v_{h} {\,d x d y } .
\end{align*}

Let $u^{0}$ be the solution of the following boundary value problem:
\begin{equation}\label{eq:2nd problem}
\left\{
\begin{array}{rl}
  -\Delta u^{0} = f, & \mbox{ in } \ \Omega , \\
u^{0}   = 0, & \mbox{ on } \ \partial\Omega .
\end{array}
\right.
\end{equation}
The following regularity result is derived in \cite{Nilssen;Tai;Winther2001}.
\begin{lemma}{\rm(\!\cite[Lemma~5.1]{Nilssen;Tai;Winther2001})}\label{lem:regularity}
For a convex domain $\Omega$, there exist a constant $C$, independent of $\varepsilon$ and $f$, such that
\begin{align}
|u|_{2,\Omega} + \varepsilon |u|_{3,\Omega} & \leqslant C \varepsilon^{-\frac{1}{2}} \|f\|_{0,\Omega}; \\
 \big|u- u^{0}\big|_{1,\Omega} & \leqslant C \varepsilon^{\frac{1}{2}} \|f\|_{0,\Omega}.
\end{align}
\end{lemma}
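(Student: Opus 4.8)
The statement is quoted from \cite[Lemma~5.1]{Nilssen;Tai;Winther2001}; here I outline the argument I would use to prove it. First I would record the elementary energy bound: testing the weak form with $v=u$ gives $\varepsilon^{2}|u|_{2,\Omega}^{2}+|u|_{1,\Omega}^{2}=(f,u)\le\|f\|_{0,\Omega}\|u\|_{0,\Omega}$, so by the Poincar\'e inequality $|u|_{1,\Omega}\lesssim\|f\|_{0,\Omega}$ and $\varepsilon|u|_{2,\Omega}\lesssim\|f\|_{0,\Omega}$. This captures the $\varepsilon$-independent part of the picture but not the sharp powers.

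The key step, where convexity enters, is to compare $u$ with the Poisson solution $u^{0}$ of \eqref{eq:2nd problem}: by elliptic regularity on the convex domain, $u^{0}\in H^{2}(\Omega)\cap H^{1}_{0}(\Omega)$ with $|u^{0}|_{2,\Omega}\lesssim\|f\|_{0,\Omega}$, and subtracting the two weak formulations yields $\varepsilon^{2}a(u,v)+b(u-u^{0},v)=0$ for all $v\in H^{2}_{0}(\Omega)$. Since $u-u^{0}$ is not in $H^{2}_{0}$ (its normal derivative on $\partial\Omega$ is $-\partial_{n}u^{0}$), I would introduce a boundary-layer corrector $\eta\in H^{2}(\Omega)$ supported in an $\mathcal{O}(\varepsilon)$-collar of $\partial\Omega$, vanishing on $\partial\Omega$, with $\partial_{n}\eta=\partial_{n}u^{0}$ on $\partial\Omega$, built from an exponential profile $e^{-\mathrm{dist}(\cdot,\partial\Omega)/\varepsilon}$ in the normal direction; such a corrector satisfies $|\eta|_{1,\Omega}\lesssim\varepsilon^{1/2}\|\partial_{n}u^{0}\|_{0,\partial\Omega}\lesssim\varepsilon^{1/2}\|f\|_{0,\Omega}$ and $|\eta|_{2,\Omega}\lesssim\varepsilon^{-1/2}\|f\|_{0,\Omega}$, where $\|\partial_{n}u^{0}\|_{0,\partial\Omega}\lesssim|u^{0}|_{1,\Omega}^{1/2}|u^{0}|_{2,\Omega}^{1/2}\lesssim\|f\|_{0,\Omega}$. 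Then $w:=u-u^{0}+\eta\in H^{2}_{0}(\Omega)$; testing the subtracted identity with $v=w$ and using $a(u,w)=|w|_{2,\Omega}^{2}+a(u^{0}-\eta,w)$, $b(u-u^{0},w)=|w|_{1,\Omega}^{2}-b(\eta,w)$, one obtains
\[
\varepsilon^{2}|w|_{2,\Omega}^{2}+|w|_{1,\Omega}^{2}=b(\eta,w)-\varepsilon^{2}a(u^{0}-\eta,w)\le|\eta|_{1,\Omega}|w|_{1,\Omega}+\varepsilon^{2}\bigl(|u^{0}|_{2,\Omega}+|\eta|_{2,\Omega}\bigr)|w|_{2,\Omega}.
\]
As $\varepsilon^{2}(|u^{0}|_{2,\Omega}+|\eta|_{2,\Omega})\lesssim\varepsilon^{3/2}\|f\|_{0,\Omega}$, Young's inequality absorbs the right-hand side and gives $\varepsilon^{2}|w|_{2,\Omega}^{2}+|w|_{1,\Omega}^{2}\lesssim\varepsilon\|f\|_{0,\Omega}^{2}$; hence $|u|_{2,\Omega}\le|w|_{2,\Omega}+|u^{0}|_{2,\Omega}+|\eta|_{2,\Omega}\lesssim\varepsilon^{-1/2}\|f\|_{0,\Omega}$ and $|u-u^{0}|_{1,\Omega}\le|w|_{1,\Omega}+|\eta|_{1,\Omega}\lesssim\varepsilon^{1/2}\|f\|_{0,\Omega}$, which are the $H^{2}$ part of the first estimate and the whole second estimate.

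For the $H^{3}$ term I would bootstrap: from $\varepsilon^{2}\Delta^{2}u=\Delta u+f$ together with $\|\Delta u\|_{0,\Omega}=|u|_{2,\Omega}\lesssim\varepsilon^{-1/2}\|f\|_{0,\Omega}$ just obtained, one controls $\Delta^{2}u$ in $L^{2}(\Omega)$ and in $H^{-1}(\Omega)$ with the appropriate $\varepsilon$-powers, and then interpolates the clamped-biharmonic elliptic-regularity estimates on the convex domain between $H^{2}$ and a higher-order norm to reach $|u|_{3,\Omega}\lesssim\varepsilon^{-3/2}\|f\|_{0,\Omega}$, i.e. $\varepsilon|u|_{3,\Omega}\lesssim\varepsilon^{-1/2}\|f\|_{0,\Omega}$.

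The main obstacle is the corrector construction in the second step: $\partial_{n}u^{0}$ lies only in $H^{1/2}(\partial\Omega)$ (no better, even on a convex polygon), so making $\eta$ genuinely an $H^{2}(\Omega)$ function while keeping $|\eta|_{1,\Omega}=\mathcal{O}(\varepsilon^{1/2})$ forces a careful construction --- mollifying the datum at scale $\varepsilon$, respecting corner compatibility, and checking that the $\varepsilon$-powers in $|\eta|_{2,\Omega}$ survive; the corner issues behind the biharmonic regularity used in the third step are similarly delicate. Both are carried out in \cite{Nilssen;Tai;Winther2001}, and an alternative to the explicit corrector is a duality argument against the Poisson problem on $\Omega$.
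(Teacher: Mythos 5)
This lemma is imported verbatim from \cite[Lemma~5.1]{Nilssen;Tai;Winther2001}; the paper gives no proof of its own, so there is nothing internal to compare your argument against --- the relevant comparison is with the cited proof, which your sketch follows in spirit. Your core step is correct: since $b(u^{0},v)=(f,v)$ for $v\in H^{2}_{0}(\Omega)$, the identity $\varepsilon^{2}a(u,v)+b(u-u^{0},v)=0$ holds, and your choice $w=u-u^{0}+\eta\in H^{2}_{0}(\Omega)$ is exactly the quasi-optimality statement that $u$ minimizes $\varepsilon^{2}|v|_{2,\Omega}^{2}+|v-u^{0}|_{1,\Omega}^{2}$ over $H^{2}_{0}(\Omega)$, tested against the comparison function $v=u^{0}-\eta$; the algebra and the Young-inequality absorption are right, and you correctly identify that everything hinges on producing $\eta$ with $|\eta|_{1,\Omega}\lesssim\varepsilon^{1/2}\|f\|_{0,\Omega}$ and $|\eta|_{2,\Omega}\lesssim\varepsilon^{-1/2}\|f\|_{0,\Omega}$, which forces mollifying $\partial_{\mathbf n}u^{0}\in H^{1/2}(\partial\Omega)$ at scale $\varepsilon$. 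That caveat is genuine and you flag it honestly.

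The one place where your route as written would lose a power of $\varepsilon$ is the $H^{3}$ bootstrap. Interpolating between $|u|_{2,\Omega}\lesssim\varepsilon^{-1/2}\|f\|_{0,\Omega}$ and an $H^{4}$ bound requires full $H^{4}$ regularity of the clamped biharmonic operator, which is delicate on polygonal domains; and the naive estimate $\|\Delta^{2}u\|_{-1,\Omega}\le\varepsilon^{-2}\bigl(\|\Delta u\|_{-1,\Omega}+\|f\|_{-1,\Omega}\bigr)\lesssim\varepsilon^{-2}\|f\|_{0,\Omega}$ only yields $\varepsilon|u|_{3,\Omega}\lesssim\varepsilon^{-1}\|f\|_{0,\Omega}$. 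The clean fix is to use the already-established second estimate: since $\varepsilon^{2}\Delta^{2}u=\Delta(u-u^{0})$, one has $\|\Delta^{2}u\|_{-1,\Omega}\le\varepsilon^{-2}|u-u^{0}|_{1,\Omega}\lesssim\varepsilon^{-3/2}\|f\|_{0,\Omega}$, and the $H^{3}$ a priori estimate for the clamped biharmonic problem with $H^{-1}$ data on a convex domain then gives $|u|_{3,\Omega}\lesssim\varepsilon^{-3/2}\|f\|_{0,\Omega}$ directly, with no $H^{4}$ regularity or interpolation needed. With that adjustment your outline is a complete and correct plan for the cited result.
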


\subsection{Rectangular Morley (RM) element}
The RM element is defined by 
 $(K,P_{K}^{\rm{M}},D_{K}^{\rm{M}})$ with the following properties:
\begin{itemize}
\item[(1)] $K$ is a rectangle;
\item[(2)] $P_{K}^{\rm{M}} = P_{2}(K) + \text{span}\{x^{3},y^{3}\}$;
\item[(3)] for any $v\in H^{2}(K)$, $D_{K}^{\rm{M}} =\big\{ v(X_{i}), \ \fint_{e_{i}}\partial_{\mathbf{n}_{e_{i}}}v \, d s \big\}_{i=1:4}$.
\end{itemize}
Given a grid $\mathcal{G}_h$, define the RM element space on $\mathcal{G}_h$ as
\begin{equation*}
\begin{split}
V_{h}^{\rm{M}}(\mathcal{G}_h) := \Big\{w_{h}\in L^{2}(\Omega) : w_{h}|_{K} \in P_{K}^{\rm{M}}, \ & w_{h}(X)\mbox{ is continuous at any nodes } X \in \mathcal{N}_{h}^{i}, \\
& \mbox{and} \fint_{e}\partial_{\mathbf{n}_{e}} w_{h} \, d s \mbox{ is continuous across any edge } e \in \mathcal{E}_{h}^{i} \Big\}.
\end{split}
\end{equation*}
Associated with the boundary condition of $H^{1}_{0}$ type, define $V_{hs}^{\rm{M}}(\mathcal{G}_h) :=\Big\{w_{h}\in V_{h}^{\rm{M}} :  w_{h}(X)=0, \ \forall X\in \mathcal{N}_{h}^{b}\Big\}$, and associated with the boundary condition of $H^{2}_{0}$ type, define $V_{h0}^{\rm{M}}(\mathcal{G}_h) :=\Big\{w_{h}\in V_{hs}^{\rm{M}} :\   \fint_{e}\partial_{\mathbf{n}_{e}} w_{h} \,d s =0,  \ \forall e\in \mathcal{E}_{h}^{b} \Big\}.$ In the sequel, we can drop the dependence on $\mathcal{G}_h$ when no ambiguity is brought in. 
\begin{lemma}{\rm(\!\cite[Theorem~5.4.1]{Wang.M;Shi.Z2013mono})}
It holds for any function $v_{h} \in V_{h0}^{\rm{M}}$ and $v\in H^{3}(\Omega)\cap H^{2}_{0}(\Omega)$ that
\begin{align*}
| a_{h}(v,v_{h}) - (\Delta^{2} v,v_{h}) | \lesssim h \big(|v|_{3,\Omega}+h\|\Delta^{2}v\|_{0,\Omega}\big) |v_{h}|_{2,h}.
\end{align*}
\end{lemma}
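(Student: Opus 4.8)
The plan is to estimate the consistency error
$|a_h(v,v_h) - (\Delta^2 v, v_h)|$ by the standard bilinear-lemma technique for nonconforming elements: integrate by parts twice on each cell $K$, collect the boundary terms over all edges, and bound each edge contribution using the weak continuity built into $V_{h0}^{\rm M}$ together with scaled trace inequalities.

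First I would write, for each $K\in\mathcal{G}_h$,
\begin{align*}
\int_K \nabla^2 v : \nabla^2 v_h \,dx\,dy
= \int_K \Delta^2 v \, v_h \,dx\,dy
+ \int_{\partial K}\Big(\partial_{\mathbf n} v_h \,\partial_{\mathbf n}(\partial_{\mathbf n} v) \;-\; \cdots\Big)\,ds,
\end{align*}
i.e. the usual identity with boundary terms involving $\partial_{\mathbf n}v_h$ paired against second normal derivatives of $v$ and $\partial_{\boldsymbol\tau}v_h$ (or $v_h$ itself after a tangential integration by parts) paired against mixed second derivatives of $v$. Summing over $K$ and regrouping edge by edge, the interior-edge terms become integrals of jumps $\jump{\partial_{\mathbf n_e}v_h}$ and $\jump{v_h}$ (the latter vanishing for the Morley-type space since vertex values are single-valued and the relevant quantity is a polynomial of controlled degree), while the boundary-edge terms carry $\partial_{\mathbf n_e}v_h$ and $v_h$ which vanish in the appropriate averaged/nodal sense because $v_h\in V_{h0}^{\rm M}$. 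Since $v\in H^3(\Omega)\cap H^2_0(\Omega)$, the second derivatives of $v$ are in $H^1$, so their traces on edges are well-defined in $L^2$.

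The key point is that $V_{h0}^{\rm M}$ only controls the \emph{edge averages} $\fint_e\partial_{\mathbf n_e}v_h\,ds$, not the pointwise normal derivative; so on each edge $e$ I would subtract the edge average of $\partial_{\mathbf n_e}v$ (a constant $\bar c_e$) and exploit
$\int_e \jump{\partial_{\mathbf n_e}v_h}\,(\partial_{\mathbf n_e}v - \bar c_e)\,ds$, where the jump of the \emph{average} of $\partial_{\mathbf n_e}v_h$ is zero. A scaled trace/Poincaré inequality gives $\|\partial_{\mathbf n_e}v - \bar c_e\|_{0,e}\lesssim h_K^{1/2}|\nabla^2 v|_{0,\omega_e} + h_K^{3/2}|\nabla^2 v|_{1,\omega_e}$ on the edge patch, and an inverse/trace estimate on the polynomial $\partial_{\mathbf n_e}v_h$ together with $\fint_e \jump{\partial_{\mathbf n_e}v_h}=0$ gives $\|\partial_{\mathbf n_e}v_h - \fint_e\partial_{\mathbf n_e}v_h\|_{0,e}\lesssim h_K^{1/2}|v_h|_{2,K}$; the terms with the extra $\|\Delta^2 v\|_{0,\Omega}$ factor come from estimating $\bar c_e$ (or the corresponding mixed-derivative average) using the equation / a duality argument, contributing the $h^2\|\Delta^2 v\|_{0,\Omega}$ piece. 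Multiplying these bounds, summing over edges, and applying Cauchy--Schwarz with the shape-regularity \eqref{eq:regularity} to handle the finitely-many overlapping patches yields
\[
|a_h(v,v_h) - (\Delta^2 v,v_h)| \lesssim h\big(|v|_{3,\Omega}+h\|\Delta^2 v\|_{0,\Omega}\big)|v_h|_{2,h}.
\]

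The main obstacle I expect is the bookkeeping of the edge terms: getting every boundary integral into a form where one factor is a jump (interior edges) or a trace (boundary edges) that is annihilated by the defining conditions of $V_{h0}^{\rm M}$ — in particular handling the tangential-derivative terms, which require an integration by parts along each edge and careful treatment at the vertices so that vertex contributions telescope to zero using single-valuedness of $v_h$ at interior nodes and $v_h=0$ at boundary nodes. Once the boundary terms are correctly identified, the remaining estimates are routine scaled trace, Poincaré, and inverse inequalities on $P_K^{\rm M}$, and this is precisely the content of \cite[Theorem~5.4.1]{Wang.M;Shi.Z2013mono}, which I would cite for the detailed verification.
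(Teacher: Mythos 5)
The paper does not prove this lemma: it is quoted from \cite[Theorem~5.4.1]{Wang.M;Shi.Z2013mono} with no in-paper argument, so the only comparison available is with the standard proof of such consistency estimates (which the paper itself replays for the term $R_1$ in Theorem~\ref{thm:errorRRM}). Your overall strategy is the right one, and your handling of the second-derivative edge terms --- subtracting edge averages from both factors, using $\fint_e\jump{\partial_{\mathbf n_e}v_h}\,ds=0$ for the normal component and vertex continuity (hence vanishing of $\fint_e\jump{\partial_{\boldsymbol\tau_e}v_h}\,ds$) for the tangential component, then scaled trace and inverse inequalities --- is exactly how those terms are treated; on rectangular grids this is particularly clean because the coordinate directions coincide with $\mathbf n_e$ and $\boldsymbol\tau_e$.

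There is, however, one genuine gap. Your opening identity integrates by parts twice on each cell, which produces a boundary term pairing $v_h$ against $\partial_{\mathbf n}(\Delta v)$; for $v\in H^{3}(\Omega)$ the third derivatives are only in $L^{2}(K)$, so $\partial_{\mathbf n}(\Delta v)$ has at best an $H^{-1/2}(\partial K)$ trace and the edge-by-edge $L^{2}$ estimates you propose for that term are not available. The standard repair --- the one used for $R_{1}$ in the proof of Theorem~\ref{thm:errorRRM} --- is to integrate by parts only \emph{once}, yielding $\sum_{K}\bigl[-\int_{K}\nabla\Delta v\cdot\nabla v_{h}+\int_{\partial K}(\nabla^{2}v\,\mathbf n)\cdot\nabla v_{h}\,ds\bigr]$, and to couple the volume term with $(\Delta^{2}v,v_{h})$ by inserting the conforming bilinear nodal interpolant $\Pi^{\rm b}_{h0}v_{h}\in H^{1}_{0}(\Omega)$: Green's formula gives $-\int_{\Omega}\nabla\Delta v\cdot\nabla\Pi^{\rm b}_{h0}v_{h}=(\Delta^{2}v,\Pi^{\rm b}_{h0}v_{h})$ legitimately, the residual $-\sum_{K}\int_{K}\nabla\Delta v\cdot\nabla(v_{h}-\Pi^{\rm b}_{h0}v_{h})$ contributes $h|v|_{3,\Omega}|v_{h}|_{2,h}$, and $(\Delta^{2}v,\Pi^{\rm b}_{h0}v_{h}-v_{h})$ contributes $h^{2}\|\Delta^{2}v\|_{0,\Omega}|v_{h}|_{2,h}$. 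In particular, your attribution of the $h^{2}\|\Delta^{2}v\|_{0,\Omega}$ term to ``estimating $\bar c_{e}$ using the equation / a duality argument'' is not where that term comes from; it is the interpolation error $\|v_{h}-\Pi^{\rm b}_{h0}v_{h}\|_{0,\Omega}\lesssim h^{2}|v_{h}|_{2,h}$ tested against $\Delta^{2}v$. With that substitution (which also removes the need to track $\jump{v_{h}}$ and vertex telescoping, since $v_{h}$ itself no longer appears on the cell boundaries), the rest of your outline goes through.
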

\begin{lemma}{\rm(\!\cite[Lemmas 3.2 and 3.5]{XY.Meng;XQ.Yang;S.Zhang2016})}\label{lem:consisRM} For any function $v_{h} \in V_{hs}^{\rm{M}}$, we have the following estimates:
\begin{itemize}
\item[(a)] For any shape-regular rectangular grid, it holds that
\begin{align*}
| b_{h}(v,v_{h}) + (\Delta v,v_{h}) | \lesssim \sum_{K\in \mathcal{G}_{h}} h_{K}^{2} |v|_{2,K} |v_{h}|_{2,K}\lesssim h |v|_{2,\Omega} |v_{h}|_{1,h}, \quad \forall v\in H^{2}(\Omega)\cap H^{1}_{0}(\Omega);
\end{align*}

\item[(b)] For any uniform rectangular grid, it holds that
\begin{align*}
| b_{h}(v,v_{h}) + (\Delta v,v_{h}) | \lesssim h^{k-1} |v|_{k,\Omega} |v_{h}|_{1,h}, \quad \forall v\in H^{k}(\Omega)\cap H^{1}_{0}(\Omega), \quad k = 2,3.
\end{align*}
\end{itemize}
\end{lemma}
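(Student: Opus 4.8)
The plan is to start from the cell-wise Green identity. Integrating by parts on each $K\in\mathcal G_h$ and collecting the boundary integrals edge by edge gives, for every $v\in H^2(\Omega)\cap H^1_0(\Omega)$ and every $v_h\in V_{hs}^{\rm M}$,
\[
b_h(v,v_h)+(\Delta v,v_h)=\sum_{e\in\mathcal E_h}\int_e\partial_{\mathbf{n}_e}v\,\llbracket v_h\rrbracket_e\,\mathrm{d}s,
\]
where $\llbracket v_h\rrbracket_e$ is the usual jump on an interior edge and equals $v_h|_e$ on a boundary edge. Two structural facts will drive the estimate: by the nodal continuity of $V_{hs}^{\rm M}$, on each edge $e$ the restriction $\llbracket v_h\rrbracket_e$ is a polynomial of degree $\le 3$ (since $P_K^{\rm M}=P_2(K)+\mathrm{span}\{x^3,y^3\}$ restricts to a cubic along a grid line) that \emph{vanishes at both endpoints of} $e$; and $\llbracket v_h\rrbracket_e\equiv 0$ whenever $v_h$ is piecewise affine, because two affine functions agreeing at the two endpoints of $e$ agree on all of $e$. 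Writing $\omega_e$ for the union of the cells meeting $e$ and $h_e$ for the length of $e$, it suffices to prove $\big|\int_e\partial_{\mathbf{n}_e}v\,\llbracket v_h\rrbracket_e\big|\lesssim h_e^{2}|v|_{2,\omega_e}|v_h|_{2,\omega_e}$ for every $e$; the second inequality in (a) is then recovered from the inverse estimate $h_K|v_h|_{2,K}\lesssim|v_h|_{1,K}$ (valid since $v_h|_K$ is a polynomial) together with Cauchy--Schwarz.

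For the per-edge bound I would subtract a conforming companion. Let $\Pi v_h$ be the continuous, piecewise-$Q_2$ Lagrange interpolant of $v_h$ at all vertices, edge midpoints and cell centers, arranged to vanish on $\partial\Omega$---which is consistent with $v_h$ at the boundary \emph{vertices}, where $v_h$ vanishes. Since $\Pi v_h\in H^1_0(\Omega)$, integration by parts gives $b_h(v,\Pi v_h)+(\Delta v,\Pi v_h)=0$, so the error equals the edge sum for $w_h:=v_h-\Pi v_h$. By construction $w_h$ vanishes at all vertices and at all \emph{interior} edge midpoints and obeys $\|w_h\|_{j,K}\lesssim h_K^{2-j}|v_h|_{2,K}$ for $j=0,1$ (Bramble--Hilbert plus scaling, as $\Pi$ reproduces $Q_2(K)\supset P_1(K)$). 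On an interior edge $e$, $\llbracket w_h\rrbracket_e$ is then a polynomial of degree $\le 3$ vanishing at $0$, $h_e/2$ and $h_e$, hence \emph{odd about the midpoint} $m_e$; consequently $\int_e\llbracket w_h\rrbracket_e=0$ while $\|\llbracket w_h\rrbracket_e\|_{0,e}\lesssim h_e^{3/2}|v_h|_{2,\omega_e}$. I would then split $\partial_{\mathbf{n}_e}v=P_1^e\partial_{\mathbf{n}_e}v+(\partial_{\mathbf{n}_e}v-P_1^e\partial_{\mathbf{n}_e}v)$, with $P_1^e$ (resp.\ $P_0^e$) the $L^2(e)$-projection onto $P_1(e)$ (resp.\ constants): the second piece contributes $\lesssim\|\partial_{\mathbf{n}_e}v-P_1^e\partial_{\mathbf{n}_e}v\|_{0,e}\,\|\llbracket w_h\rrbracket_e\|_{0,e}\lesssim h_e^{1/2}|v|_{2,\omega_e}\cdot h_e^{3/2}|v_h|_{2,\omega_e}$, as desired; for the first piece, its constant part is annihilated by $\int_e\llbracket w_h\rrbracket_e=0$, while its affine part pairs against $\llbracket w_h\rrbracket_e$ through a slope bounded by $\|\partial_{\mathbf{n}_e}v-P_0^e\partial_{\mathbf{n}_e}v\|_{0,e}/h_e^{3/2}\lesssim h_e^{-1}|v|_{2,\omega_e}$---a second-seminorm bound, with no lower-order term. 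This is exactly the step at which a fatal $|v|_1|v_h|_1$ coupling would otherwise appear, and the odd symmetry of $\llbracket w_h\rrbracket_e$ is what removes it.

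The hard part, which I expect to be the main obstacle, is the boundary edges. There $\Pi v_h$ cannot simultaneously match the midpoint value $v_h(m_e)$ and stay in $H^1_0(\Omega)$, so the boundary contribution reduces to $\sum_{e\subset\partial\Omega}\int_e\partial_{\mathbf{n}_e}v\,v_h|_e$ with $v_h|_e$ a cubic vanishing only at its two (boundary) endpoints; the symmetry argument above no longer applies, and the term $P_0^e\partial_{\mathbf{n}_e}v\cdot\int_e v_h|_e$ has to be controlled by exploiting that $v$ vanishes on $\partial\Omega$ together with the rectangular structure of the boundary cells---which is ultimately where the enrichment $\mathrm{span}\{x^3,y^3\}$ of $P_K^{\rm M}$ is decisive, and is precisely what the triangular Morley element lacks (hence its non-convergence for $-\Delta$). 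Once every edge is handled with the bound $h_e^2|v|_{2,\omega_e}|v_h|_{2,\omega_e}$, summing over $e$ gives (a). For part (b), on a uniform mesh all cells are translates of a single rectangle, so expanding $v$ in a Taylor series about the cell centers shows that the leading edge-error terms cancel between opposite edges by parity; carrying the expansion one order further, under the extra regularity $v\in H^k(\Omega)$ with $k=2,3$, upgrades the per-edge estimate and yields $\big|b_h(v,v_h)+(\Delta v,v_h)\big|\lesssim h^{k-1}|v|_{k,\Omega}|v_h|_{1,h}$.
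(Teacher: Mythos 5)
First, a remark on scope: the paper does not prove this lemma at all --- it is quoted verbatim from \cite[Lemmas 3.2 and 3.5]{XY.Meng;XQ.Yang;S.Zhang2016} --- so there is no in-paper proof to compare against and your attempt must stand on its own. Your reduction to the edge sum $\sum_{e}\int_e\partial_{\mathbf{n}_e}v\,\llbracket v_h\rrbracket_e\,ds$, the observation that the jump is a cubic vanishing at the endpoints of $e$ and vanishing identically when $v_h$ is piecewise affine (whence $\|\llbracket v_h\rrbracket_e\|_{0,e}\lesssim h_e^{3/2}|v_h|_{2,\omega_e}$), and the passage from the $|v_h|_{2,K}$ form to the $|v_h|_{1,h}$ form by the inverse inequality are all sound. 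The gap sits exactly at the step you yourself single out as decisive. Subtracting a continuous companion $\Pi v_h\in H^1_0(\Omega)$ cannot alter any jump: $\llbracket v_h-\Pi v_h\rrbracket_e=\llbracket v_h\rrbracket_e$. The rectangular Morley element enforces continuity only of vertex values and of $\fint_e\partial_{\mathbf{n}_e}(\cdot)$, not of the value at the edge midpoint (your interpolant is not even well defined there, $v_h$ being two-valued at $m_e$). Hence $\llbracket w_h\rrbracket_e$ does not vanish at $m_e$, is not odd about $m_e$, and $\int_e\llbracket v_h\rrbracket_e\,ds\neq 0$ in general: on a shared vertical edge parametrized by $y\in[0,1]$, traces $y^3-y$ and $y-y^2$ from the two sides have matching vertex values and mean normal derivatives, yet their jump has mean $-5/12$. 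So the constant mode $P_0^e(\partial_{\mathbf{n}_e}v)\int_e\llbracket v_h\rrbracket_e\,ds$ survives and produces precisely the $|v|_{1}|v_h|_{1}$ coupling you were trying to kill; the boundary edges, which you leave open, exhibit the same problem undisguised.

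The cancellation that actually rescues the element is between \emph{opposite edges of the same rectangle}, not within a single edge. Group the edge terms cell by cell, subtract from $v_h|_K$ its $Q_1$ vertex interpolant $q_K$ (the $q_K$ are single-valued across edges and vanish on $\partial\Omega$, so they contribute nothing), and note that for $w=v_h-q_K\in P_2(K)+\mathrm{span}\{x^3,y^3\}$ the traces on the top and bottom edges of $K$ have identical $x^2$ and $x^3$ coefficients and both vanish at the endpoints; their difference is therefore an affine function of $x$ with two zeros, i.e.\ zero, so $\int_{e_{\rm top}}w\,ds=\int_{e_{\rm bot}}w\,ds$. The two dangerous constant modes then combine into $\bigl(\fint_{e_{\rm top}}\partial_y v-\fint_{e_{\rm bot}}\partial_y v\bigr)\int_{e_{\rm top}}w\,ds=\bigl(\tfrac{1}{L_K}\int_K\partial_{yy}v\,dxdy\bigr)\int_{e_{\rm top}}w\,ds\lesssim h_K^2|v|_{2,K}|v_h|_{2,K}$, with the analogous pairing for the vertical edges; boundary cells are handled identically because the pairing is intra-cell. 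This opposite-edge mechanism is what your argument is missing, and without it neither (a) nor the refined uniform-grid estimate (b) --- which requires a further Taylor expansion and inter-cell cancellation on top of it --- is established.
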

\section{Reduced rectangular Morley element space revisited}
\label{sec:rrmscheme}

The reduced rectangular Morley (RRM) element space \cite{Shuo.Zhang2020, Zeng.H;Zhang.C;Zhang.S2019} is defined as  
\begin{equation*}
\begin{split}
V_{h}^{\rm{R}} := \Big\{w_{h}\in V^{\rm M}_h : w_{h}|_{K} \in P_{2}(K)\Big\}.
\end{split}
\end{equation*}
Associated with $H^{1}_{0}(\Omega)$, define $V_{hs}^{\rm{R}} :=V^{\rm R}_{h}\cap V^{\rm M}_{hs}$, and  associated with $H^{2}_{0}(\Omega)$, define $V_{h0}^{\rm{R}} :=V^{\rm M}_{h0}\cap V^{\rm R}_h$.

\subsection{Local basis functions of RRM element}
A boundary vertex is called a {\it corner node} if it is an intersection of two boundary edges that are not on the same line. It can be divided into {\it convex corner node} or {\it concave corner node}. It is  assumed that any two corner nodes are not in the same cell.   A boundary edge is called a {\it corner edge} if one of its endpoints is a corner node, otherwise it is named as {\it non-corner boundary edge}.  Denote, by $\mathcal{M}_{K}$,  a $3\times 3$ patch centered at $K$, whose lengths and heights are denoted as $\big\{L_{K,-1},\ L_{K},\ L_{K,1}\big\}$ and $\big\{H_{K,-1},\ H_{K},\ H_{K,1}\big\}$, respectively;  see Figure~\ref{fig:3x3basis}. 
\begin{figure}[!htbp]
\centering
\includegraphics[height=0.34\hsize]{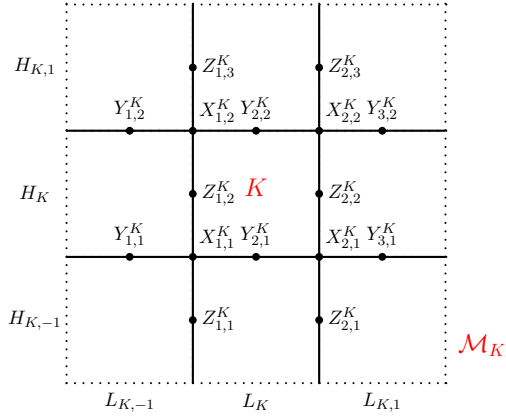}
\caption{Illustration of a $3 \times 3$ patch $\mathcal{M}_{K}$.}\label{fig:3x3basis}
\end{figure}
%
\begin{lemma}\rm(\!\cite[Lemma 15]{Shuo.Zhang2020})
Let $\mathcal{M}_{K}$ be a $3\times 3$ patch centered at $K$; see Figure~\ref{fig:3x3basis}.  Denote $V_{K}^{\rm{R}}:=V^{\rm R}_{h0}(\mathcal{M}_{K})$. Then ${\rm dim}(V_{K}^{\rm{R}} ) = 1$.
\end{lemma}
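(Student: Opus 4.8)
The plan is to compute the dimension of $V_{h0}^{\rm R}(\mathcal{M}_K)$ by a direct counting argument on the $3\times 3$ patch, and then to exhibit a single nontrivial function to confirm the dimension is exactly one rather than zero. First I would set up the bookkeeping: on a $3\times 3$ patch there are $9$ cells, $16$ vertices ($4$ interior, $12$ boundary), and $24$ edges ($12$ interior, $12$ boundary). On each cell the local space is $P_2(K)$, which is $6$-dimensional, so the ``broken'' space has dimension $54$. The homogeneous boundary conditions of $H^2_0$ type on $\mathcal{M}_K$ kill all boundary vertex values ($12$ constraints) and all boundary edge normal-derivative averages ($12$ constraints). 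The interior continuity requirements are: continuity of the function value at each of the $4$ interior vertices, and continuity of $\fint_e \partial_{\mathbf n_e} w_h\,ds$ across each of the $12$ interior edges.

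The subtle point, and the one I expect to be the main obstacle, is that vertex-value continuity at an interior vertex shared by four cells is not four independent scalar constraints but three, and one must argue that the full collection of continuity constraints is linearly independent (or identify exactly the rank deficiency). The cleanest route is to build the space up cell-by-cell through the patch, at each stage counting how many of the six degrees of freedom of the newly added $P_2(K)$ are pinned down by the constraints already in force (boundary conditions on boundary edges/vertices of that cell, plus matching conditions on edges/vertices shared with previously processed cells). Because $P_2$ restricted to an edge is a quadratic in one variable ($3$ parameters) and the normal derivative restricted to an edge is linear ($2$ parameters), each shared edge imposes at most $1$ genuinely new scalar condition (the normal-derivative average) beyond the two endpoint values, and one tracks carefully whether endpoint values are already determined. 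Carrying this out for a convenient processing order (say, the four corner cells, then the four edge-midcells, then the central cell $K$ last) should show that after processing the eight outer cells everything is forced to zero, and the central cell $K$ retains exactly one free parameter: its six $P_2$ coefficients must match prescribed vertex values and edge-normal-derivative averages coming from its eight neighbours, which at that stage are all zero, so $w_h|_K$ vanishes at all four vertices of $K$ and has zero normal-derivative average on all four edges of $K$ — and one checks this homogeneous system on $P_2(K)$ has a one-dimensional kernel.

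To pin down that the kernel on the central cell is exactly one-dimensional, note that a function in $P_2(K)$ vanishing at the four vertices of $K$ lies in the $2$-dimensional span of the two products of the ``edge'' linear functions (after an affine change of coordinates mapping $K$ to $[-1,1]^2$, these are $\{(1-x^2), (1-y^2)\}$ up to scaling — more precisely $w$ vanishing at $(\pm1,\pm1)$ forces $w \in \mathrm{span}\{1-x^2,\,1-y^2\}$ within $P_2$, since the six-dimensional $P_2$ has a four-dimensional subspace of functions attaining prescribed values at the four corners). Imposing that the four edge normal-derivative averages also vanish is then two further linear conditions on this $2$-dimensional space (by symmetry the top/bottom edges give one condition and the left/right edges give another), and a short explicit check shows these two conditions cut the $2$-dimensional space down to dimension $1$ — the surviving function being (a scalar multiple of) $(1-x^2)H_K^{-2}\cdot\text{const} - (1-y^2)L_K^{-2}\cdot\text{const}$ balanced so that both normal-derivative averages vanish; one verifies this candidate is globally in $V_{h0}^{\rm R}(\mathcal{M}_K)$ by extending it by zero to the outer eight cells and checking the interface conditions hold (both the value at interior vertices of $K$ and the normal-derivative averages on edges of $K$ are zero, matching the zero extension). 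Finally one records that the extension-by-zero function is genuinely nonzero, so $\dim V_K^{\rm R} \geq 1$, and combined with the cell-by-cell upper bound $\dim V_K^{\rm R} \leq 1$, the result $\dim V_K^{\rm R} = 1$ follows. I would remark that the essential mechanism — more interface constraints than local degrees of freedom, forcing the outer cells to vanish and leaving the centre cell with a single ``bubble-like'' degree of freedom — is exactly the spline-type phenomenon referred to in the introduction, and it is also what later obstructs the existence of a local projective interpolation.
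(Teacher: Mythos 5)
Your overall strategy (cell-by-cell elimination plus an explicit generator) is reasonable in outline, but the structural claim at its heart is wrong, and the candidate function you exhibit does not exist. First, the ``central bubble'': within $P_{2}(K)$, a function vanishing at the four vertices of a rectangle lies in ${\rm span}\{1-x^{2},\,1-y^{2}\}$ (on the reference square), as you say; but the normal-derivative averages on the left/right edges equal $\mp 2d$ and on the top/bottom edges equal $\mp 2f$ for $p=d(x^{2}-1)+f(y^{2}-1)$, so the four edge conditions reduce to the \emph{two independent} conditions $d=0$ and $f=0$, cutting your two-dimensional space down to $\{0\}$, not to dimension $1$. (Equivalently: the rectangular Morley degrees of freedom are unisolvent on $P_{K}^{\rm M}\supset P_{2}(K)$, so any quadratic with vanishing vertex values and vanishing edge normal-derivative averages is identically zero.) Hence the extension-by-zero of a central-cell bubble gives only the zero function, and your lower bound $\dim V_{K}^{\rm R}\geq 1$ is not established.

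Second, and symmetrically, the outer cells are \emph{not} forced to vanish. Take the bottom-left corner cell $[0,L]\times[0,H]$ of the patch: the five homogeneous conditions (three boundary vertex values and two boundary edge normal-derivative averages) leave a one-parameter family $p=e\bigl(-\tfrac{H}{2}x-\tfrac{L}{2}y+\tfrac{H}{2L}x^{2}+xy+\tfrac{L}{2H}y^{2}\bigr)$, whose value at the patch-interior vertex $(L,H)$ is $eLH\neq 0$. This is consistent with the explicit description given immediately after the lemma: the generator of $V_{K}^{\rm R}$ has \emph{nonzero} values $v_{m,n}^{K}$ at all four interior vertices and nonzero normal-derivative averages on the interior edges, i.e.\ it is supported on all nine cells, with the ratios $\gamma_{x}^{K},\gamma_{y}^{K}$ in \eqref{eq:values1Basis}--\eqref{eq:values3Basis} encoding exactly how the corner-cell and edge-cell parameters are coupled through the interior matching conditions. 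A correct proof has to carry out that coupling: after the boundary conditions the four corner cells keep $1$ parameter each, the four edge-mid cells keep $3$ each, and the centre cell keeps $6$ (total $22$), and one must show that the $24$ interior matching conditions ($3$ at each of the $4$ interior vertices, $1$ on each of the $12$ interior edges) have rank exactly $21$ on this $22$-dimensional space. Your processing order collapses precisely at the step where you assert the outer ring is annihilated; it is not, and the single surviving degree of freedom is global on the patch, not a local bubble. (Note also that the paper itself does not prove this lemma but cites \cite[Lemma 15]{Shuo.Zhang2020}; the formulas \eqref{eq:values1Basis}--\eqref{eq:values3Basis} are the outcome of that computation.)
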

Now we give a detailed description of the functions in $V_{K}^{\rm{R}}$. Let $\big\{X_{m,n}^{K}\big\}$, $\big\{Y_{m,n}^{K}\big\}$, and $\big\{Z_{m,n}^{K}\big\}$ denote the  interior vertices,  interior edge midpoints in the $x$ direction, and interior edge midpoints in the $y$ direction inside $\mathcal{M}_{K}$, respectively (see Figure~\ref{fig:3x3basis}). For any $\varphi \in V_{K}^{R}$, we denote $v_{m,n}^{K}:=\varphi(X_{m,n}^{K})$, $u_{m,n}^{K}:=\partial_y\varphi(Y_{m,n}^{K})$, and $z_{m,n}^{K}:=\partial_x\varphi(Z_{m,n}^{K})$. Then the values of $\{v_{m,n}^{K}\}$,  $\{u_{m,n}^{K}\}$, and  $\{z_{m,n}^{K}\}$ satisfy that
\begin{align}
\big[v_{1,1}^{K},\ v_{2,1}^{K},\ v_{1,2}^{K},\ v_{2,2}^{K}\big] & = \big[1,  \gamma_{x}^{K},\ \gamma_{y}^{K},\ \gamma_{x}^{K}\gamma_{y}^{K}\big]v_{1,1}^{K}; \label{eq:values1Basis} 
\\
\big[u_{1,1}^{K},\ u_{2,1}^{K},\ u_{3,1}^{K},\ u_{1,2}^{K},\ u_{2,2}^{K},\ u_{3,2}^{K}\big] & = \Big[\tfrac{1}{H_{K,-1}},\ \tfrac{1+\gamma_{x}^{K}}{H_{K,-1}},\ \tfrac{\gamma_{x}^{K}}{H_{K,-1}},\ \tfrac{-\gamma_{y}^{K}}{H_{K,1}},\ \tfrac{-(1+\gamma_{x}^{K})\gamma_{y}^{K}}{H_{K,1}},\ \tfrac{-\gamma_{x}^{K}\gamma_{y}^{K}}{H_{K,1}}\Big]v_{1,1}^{K};\label{eq:values2Basis}
\\
\big[z_{1,1}^{K},\ z_{2,1}^{K},\ z_{1,2}^{K},\ z_{2,2}^{K},\ z_{1,3}^{K},\ z_{2,3}^{K}\big] &= \Big[\tfrac{1}{L_{K,-1}},\ \tfrac{-\gamma_{x}^{K}}{L_{K,1}},\ \tfrac{1+\gamma_{y}^{K}}{L_{K,-1}},\ \tfrac{-(1+\gamma_{y}^{K})\gamma_{x}^{K}}{L_{K,1}},\ \tfrac{\gamma_{y}^{K}}{L_{K,-1}},\ \tfrac{-\gamma_{x}^{K}\gamma_{y}^{K}}{L_{K,1}}\Big]v_{1,1}^{K},\label{eq:values3Basis}
\end{align}
where $\gamma_{x}^{K}= \frac{1+\frac{L_{K}}{L_{K,-1}}}{1+\frac{L_{K}}{L_{K,1}}}$ and $\gamma_{y}^{K}= \frac{1+\frac{H_{K}}{H_{K,-1}}}{1+\frac{H_{K}}{H_{K,1}}}$. For each vertice $X_{m,n}^{K}$, midpoint $Y_{m,n}^{K}$, or midpoint $Z_{m,n}^{K}$ on the boundary of $\mathcal{M}_{K}$, $v_{m,n}^{K}$, $u_{m,n}^{K}$, or $z_{m,n}^{K}$ equals to zero correspondingly. Therefore, $\varphi \in V_{K}^{\rm{R}}$ is uniquely determined, if $\varphi(X_{1,1}^{K})$ is determined. 
\begin{definition}\label{def:3x3basis}{
\rm
Let $\mathcal{M}_{K}$ be a $3\times 3$ patch with a center element $K$.  Denote, by $\varphi_{K}$, a function supported on $\mathcal{M}_{K}$, which satisfies
\begin{itemize}
\item[(a)]$\varphi_{K}(x,y)\equiv 0,\quad \forall (x,y)\notin  \mathcal{M}_{K}$;
\item[(b)] $\varphi_{K}|_{\mathcal{M}_{K}} \in V_{K}^{\rm{R}}$, and specially, $\varphi_{K}(X_{1,1}^{K}) = \frac{L_{K,-1}}{L_{K,-1}+L_{K}}\cdot \frac{H_{K,-1}}{H_{K,-1}+H_{K}}$.
	\end{itemize}	
}
\end{definition}

The assumption of $\varphi_{K}(X_{1,1}^{K}) = \frac{L_{K,-1}}{L_{K,-1}+L_{K}}\cdot \frac{H_{K,-1}}{H_{K,-1}+H_{K}}$ is not necessary, but can facilitate the subsequent analysis. 
\begin{figure}[!htbp]
\centering
\includegraphics[height=0.34\hsize]{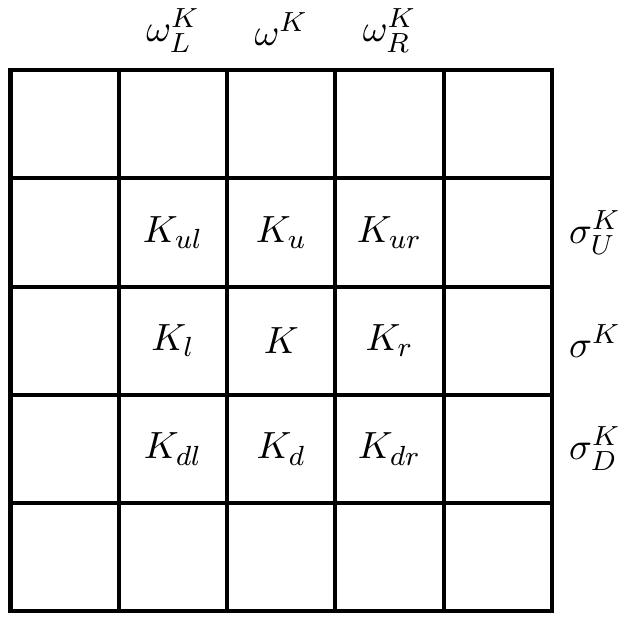}
\caption{Illustration of a $5 \times 5$ patch centered at $K$.}\label{fig:5x5basis}
\end{figure}

Recall that we use $\mathcal{K}_{h}^{i}$ and $\mathcal{K}_{h}^{b}$ for the set of interior cells and boundary cells, respectively. For any $K\in \mathcal{K}_{h}^{i}$, there exists a $3\times 3$ patch centered at $K$.

\begin{proposition}\label{pro:property of phi 5x5}
Let $K$ be an interior element in  $\mathcal{K}_{h}^{i}$, and $\mathcal{M}_{K}$ be its corresponding $3\times 3$ patch. Assume that the $5\times 5$ patch centered at $K$ is within $\Omega$. Let $A_{K}= \{K_{dl},\ K_{d},\ K_{dr},\ K_{l},\ K,\ K_{r},\ K_{ul},\ K_{u}, \ K_{ur} \};$ see Figure~\ref{fig:5x5basis}. Then $K$ is located in the supports of nine functions $\big\{\varphi_{T}:  \ T\in A_{K}\big\}$.
\begin{itemize}
\item[(a)] For any $v\in Q_{1}(\mathcal{M}_{K})$ and $(x,y)\in K$, it holds that
\begin{align*}
\sum_{T\in A_{K}} v(c_{T})\varphi_{T}(x,y) = v \quad \mbox{and}\quad 
\sum_{T\in A_{K}} (\fint_{T}v{\,d x d y }) \varphi_{T}(x,y)= v.
\end{align*}
\item[(b)] For any $v \in P_{2}(\mathcal{M}_{K})$ and $(x,y)\in K$, it holds that
\begin{align*}
&\sum_{T\in A_{K}} r_{T}(v)\varphi_{T}(x,y) = v, \mbox{ with} \ \  r_{T}(v) = v(c_{T}) - \tfrac{1}{8}\big(L_{T}^{2}  \tfrac{\partial^{2}v}{\partial x^{2}} + H_{T}^{2}\tfrac{\partial^{2}v}{\partial y^{2}} \big),
\\
&\sum_{T\in A_{K}} t_{T}(v)\varphi_{T}(x,y) =v, \mbox{ with} \ \ t_{T}(v) = \fint_{T}v {\,d x d y }  - \tfrac{1}{6}\big(L_{T}^{2}\tfrac{\partial^{2}v}{\partial x^{2}}  + H_{T}^{2}\tfrac{\partial^{2}v}{\partial y^{2}} \big).
\end{align*}
\item[(c)] For any $(x,y)\in K$, there exists a set of coefficients $\{d_{T}\}$, such that 
\begin{align*}
\sum_{T\in A_{K}} d_{T}L_{T}H_{T}\varphi_{T}(x,y) = 0, \quad \forall (x,y)\in K,
\end{align*}
where $d_{K} = \pm 1$, $d_{K_{d}} = d_{K_{u}} = d_{K_{l}} = d_{K_{r}} = - d_{K}$, and $d_{K_{dl}} = d_{K_{dr}} = d_{K_{ul}} = d_{K_{ur}}  = d_{K}$.
\end{itemize}
\end{proposition}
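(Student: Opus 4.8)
The plan is to reduce everything to a single identity among the nine local basis functions $\{\varphi_T : T \in A_K\}$ restricted to the center cell $K$, and then read off (a), (b), (c) as specializations. The key observation is that, by Definition~\ref{def:3x3basis} together with \eqref{eq:values1Basis}--\eqref{eq:values3Basis}, each $\varphi_T|_K$ is an explicitly known quadratic polynomial on $K$: its nodal values at the four vertices of $K$, its mean normal derivatives on the four edges of $K$, and hence (since $P_2(K)$ is the span we live in and these data determine it) $\varphi_T|_K$ itself are rational functions of the three aspect ratios $L_{T,\pm1}/L_T$, $H_{T,\pm1}/H_T$ appearing in $\gamma_x^T,\gamma_y^T$. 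So the first step is bookkeeping: for each of the nine positions $T\in A_K$, write down which node/edge of $K$ is "interior vertex $(1,1)$ of $\mathcal M_T$'' etc., and thereby get a clean formula for $\varphi_T|_K\in P_2(K)$. Because $\dim P_2(K)=6$ and we have nine functions, there must be three independent linear relations among $\{\varphi_T|_K\}$; part (c) exhibits one of them, and parts (a), (b) amount to expressing the "useful'' polynomials $1$, the $Q_1$ functions, and the $P_2$ functions in this spanning set.

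For part (a): the nine functionals $v\mapsto v(c_T)$ are the standard piecewise-bilinear-type degrees of freedom on the $3\times3$ grid of cell centers, and the claim $\sum_T v(c_T)\varphi_T|_K = v$ for $v\in Q_1$ is exactly the statement that $\{\varphi_T\}$ reproduces $Q_1$ with those functionals. I would verify this by checking it on the basis $\{1,x,y,xy\}$ of $Q_1(\mathcal M_K)$: plug each into the left side, use the vertex/edge-midpoint values from \eqref{eq:values1Basis}--\eqref{eq:values3Basis} (the $\gamma$'s are precisely designed so the telescoping works), and confirm equality on $K$; the second identity follows because $\fint_T v\,dxdy = v(c_T)$ whenever $v\in Q_1$. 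For part (b), note that on $K$ a general $v\in P_2(\mathcal M_K)$ differs from its $Q_1$ part only by the pure-quadratic monomials $x^2$ and $y^2$ (mod lower order), so by linearity and part (a) it suffices to evaluate $\sum_T v(c_T)\varphi_T|_K$ and $\sum_T (\fint_T v)\varphi_T|_K$ when $v = x^2$ and $v = y^2$ and see by how much they fail to reproduce $v$; the correction terms $\tfrac18(L_T^2\partial_{xx}v + H_T^2\partial_{yy}v)$ and $\tfrac16(L_T^2\partial_{xx}v + H_T^2\partial_{yy}v)$ are exactly $c_T$ minus, and $\fint_T$ minus, the value of the quadratic at the relevant reference point, i.e.\ the Bramble--Hilbert-type defect of the bilinear interpolant on a rectangle of sides $L_T,H_T$. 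Defining $r_T, t_T$ to subtract off these defects makes the reproduction exact. Part (c) is then the cleanest: the constant function $1$ is reproduced in two ways — directly via part (a) with $v\equiv1$ giving $\sum_T \varphi_T|_K = 1$, and one can also build $1$ from the $L_TH_T$-weighted combination; subtracting, or rather directly checking that the specified alternating-sign combination $\sum_T d_T L_TH_T\varphi_T|_K$ annihilates all six degrees of freedom of $P_2(K)$ (the four vertex values and four edge-mean normal derivatives — eight conditions, but only six independent on $P_2(K)$), forces it to vanish on $K$; the sign pattern $d_K=\pm1$, edge-neighbors $-d_K$, corner-neighbors $+d_K$ is the natural "discrete second difference in both directions'' that kills quadratics.

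The main obstacle will be the first step — getting the nine polynomials $\varphi_T|_K$ correct. The indexing in \eqref{eq:values1Basis}--\eqref{eq:values3Basis} is stated relative to a patch $\mathcal M_T$ centered at $T$, so for each neighbor position $T\in A_K$ one must correctly identify how cell $K$ and its vertices/edges sit inside $\mathcal M_T$ (e.g.\ for $T=K_u$, the cell $K$ is the bottom-center cell of $\mathcal M_{K_u}$, so $K$'s data come from the "$n=1$ row'' entries), and keep the mesh-ratio parameters $\gamma_x^T,\gamma_y^T$ straight across the nine patches. Once that dictionary is in hand, everything reduces to checking polynomial identities on a single rectangle $K$ by testing against the monomials $1,x,y,xy,x^2,y^2$, which is routine. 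I would organize the write-up so that the $Q_1$ case (a) is done first and in full, then (b) and (c) are obtained as short perturbations of it using only the values of $\varphi_T$ at the already-computed nodes.
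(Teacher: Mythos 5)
Your proposal is correct and follows essentially the same route as the paper: reduce each identity to a check of the eight rectangular-Morley degrees of freedom (vertex values and edge-mean normal derivatives) of a $P_2$ function on $K$ using \eqref{eq:values1Basis}--\eqref{eq:values3Basis} and the normalization of $\varphi_T(X_{1,1}^T)$, obtain (b) by computing the defect of the formulas in (a) on $x^2$ and $y^2$ and subtracting it, and obtain (c) by verifying the alternating combination annihilates all those degrees of freedom. The only cosmetic difference is your observation that $\fint_T v = v(c_T)$ for $v\in Q_1$, which collapses the two identities in (a) into one check; the paper instead verifies both directly.
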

\begin{proof} 
$(a)$
The left-hand-side of each equality is a sum of $P_{2}$ polynomials restricted on $K$, and the right-hand-side is a bilinear polynomial. We only need to verify that their values equal on the vertices of $K$, and normal derivatives equal on the midpoints of edges on $\partial K$. Utilizing \eqref{eq:values1Basis}, \eqref{eq:values2Basis}, \eqref{eq:values3Basis}, and $\varphi_{T}(X_{1,1}^{T}) = \frac{L_{T,-1}}{L_{T,-1}+L_{T}}\cdot \frac{H_{T,-1}}{H_{T,-1}+H_{T}}$, the results can be verified directly.

\noindent$(b)$
For $v = x^{2}$, direct calculation leads to, \begin{align*}
& \sum_{T\in A_{K}}  v(c_{T})\varphi_{T}(x,y) = x^{2} + \frac{1}{4}\sum_{T\in A_{K}} L_{T}^{2}\varphi_{T}(x,y), \quad \forall (x,y)\in K; \\
& \sum_{T\in A_{K}} (\fint_{T}v{\,d x d y }) \varphi_{T}(x,y)  = x^{2} + \frac{1}{3}\sum_{T\in A_{K}} L_{T}^{2}\varphi_{T}(x,y), \quad \forall (x,y)\in K.
\end{align*}
For $v = y^{2}$, we have similarly,
\begin{align*}
& \sum_{T\in A_{K}} v(c_{T})\varphi_{T}(x,y) = y^{2} + \frac{1}{4}\sum_{T\in A_{K}} H_{T}^{2}\varphi_{T}(x,y), \quad \forall (x,y)\in K; \\
& \sum_{T\in A_{K}} (\fint_{T}v{\,d x d y }) \varphi_{T}(x,y)  = y^{2} + \frac{1}{3}\sum_{T\in A_{K}} H_{T}^{2}\varphi_{T}(x,y), \quad \forall (x,y)\in K.
\end{align*}
Therefore, for $v \in P_{2}(\mathcal{M}_{K})$, it holds that, 
\begin{align*}
&  \sum_{T\in A_{K}} r_{T}(v)\varphi_{T}(x,y) = v, \ \forall (x,y)\in K, \mbox{ where } r_{T}(v) = v(c_{T}) - \tfrac{1}{8}\big(L_{T}^{2}  \tfrac{\partial^{2}v}{\partial x^{2}} + H_{T}^{2}\tfrac{\partial^{2}v}{\partial y^{2}} \big);
\\
& \sum_{T\in A_{K}} t_{T}(v)\varphi_{T}(x,y) = v,  \ \forall (x,y)\in K, \mbox{ where } t_{T}(v) = \fint_{T}v{\,d x d y} - \tfrac{1}{6}\big(L_{T}^{2}\tfrac{\partial^{2}v}{\partial x^{2}}  + H_{T}^{2}\tfrac{\partial^{2}v}{\partial y^{2}} \big).
\end{align*}

\noindent$(c)$
Similar to $(a)$, consider the left-hand-side of this equality, we only have to verify its values vanish at each vertex of $K$, and its normal derivatives vanish at each midpoints of edges of $K$. 
\end{proof}

\begin{remark}\label{rem:adjoint basis property}
{\rm
Suppose $K$ is located in the supports of nine functions $\big\{\varphi_{T}:  \ T\in A_{K}\big\}$;  see Figure~\ref{fig:5x5basis}. Denote $\omega_{L}^{K}:= K_{dl}\cup K_{l}\cup K_{ul}$, $\omega^{K}:= K_{d}\cup K\cup K_{u}$, and $\omega_{R}^{K}:= K_{dr}\cup K_{r}\cup K_{ur}$. Denote $\sigma_{D}^{K} := K_{dl}\cup K_{d}\cup K_{dr}$, $\sigma^{K} := K_{l}\cup K \cup K_{r}$, and $\sigma_{U}^{K} := K_{ul}\cup K_{u}\cup K_{ur}$. For $v \in \{1,\ x, \ y,\ xy, \ x^{2}, \ y^{2}\}$, it can be shown that 
}
\begin{itemize}
\item[(a)]
{\rm
$\big(r_{K}(v)\varphi_{K} + r_{K_{r}}(v)\varphi_{K_{r}}\big)|_{\omega_{L}^{K}\cup \omega^{K}}$,  $\big(t_{K}(v)\varphi_{K} + t_{K_{r}}(v)\varphi_{K_{r}}\big)|_{\omega_{L}^{K}\cup \omega^{K}}$, and $\big(d_{K}L_{K}H_{K}\varphi_{K} + d_{K_{r}} L_{K_{r}}H_{K_{r}}\varphi_{K_{r}}\big)|_{\omega_{L}^{K}\cup \omega^{K}}$ are independent of $L_{K,1} \ \big( =L_{K_{r}}\big)$ and $L_{K_{r},1}.$ 
}
\item[(b)]
{\rm
$\big(r_{K_{l}}(v)\varphi_{K_{l}} + r_{K}(v)\varphi_{K}\big)|_{\omega^{K} \cup \omega_{R}^{K}}$, $\big(t_{K_{l}}(v)\varphi_{K_{l}} + t_{K}(v)\varphi_{K}\big)|_{\omega^{K}\cup \omega_{R}^{K}}$ and $\big(d_{K_{l}}L_{K_{l}}H_{K_{l}}\varphi_{K_{l}} + d_{K}L_{K}H_{K}\varphi_{K}\big)_{\omega^{K}\cup \omega_{R}^{K}}$ are  independent of $L_{K_{l},-1}$ and $L_{K,-1} \ \big(= L_{K_{l}}\big).$
}
\item[(c)]
{\rm 
$\big(r_{K}(v)\varphi_{K} + r_{K_{u}}(v)\varphi_{K_{u}}\big)|_{\sigma_{D}^{K}\cup \sigma^{K}}$, $\big(t_{K}(v)\varphi_{K} + t_{K_{u}}(v)\varphi_{K_{u}}\big)|_{\sigma_{D}^{K}\cup \sigma^{K}}$ and $\big(d_{K}L_{K}H_{K}\varphi_{K} +d_{K_{u}}L_{K_{u}}H_{K_{u}}\varphi_{K_{u}}\big)_{\sigma_{D}^{K}\cup \sigma^{K}}$ are independent of $H_{K,1} \ \big(= H_{K_{u}}\big)$ and $H_{K_{u},1}.$
}
\item[(d)]
{\rm
$\big(r_{K_{d}}(v)\varphi_{K_{d}} + r_{K}(v)\varphi_{K}\big)|_{\sigma^{K}\cup \sigma_{U}^{K}}$, $\big(t_{K_{d}}(v)\varphi_{K_{d}} + t_{K}(v)\varphi_{K}\big)|_{\sigma^{K}\cup \sigma_{U}^{K}}$ and $\big(d_{K_{d}}L_{K_{d}}H_{K_{d}}\varphi_{K_{d}} + d_{K}L_{K}H_{K}\varphi_{K}\big)_{\sigma^{K}\cup \sigma_{U}^{K}}$ are independent of $H_{K_{d},-1}$ and $H_{K,-1}  \ \big(=H_{K_{d}}\big).$
}
\end{itemize}
\end{remark}
\begin{figure}[!htbp]
\centering
\includegraphics[height=0.33\hsize]{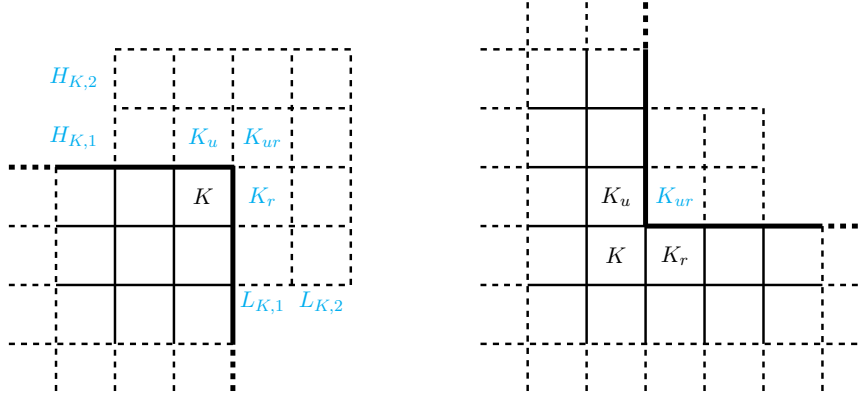}
\caption{Expansion outside $\Omega$ and four basis functions added at a convex corner node (Left) and a concave corner node (Right).}\label{fig:expansion at a corner}
\end{figure}
\begin{figure}[!htbp]
\centering
\includegraphics[height=0.33\hsize]{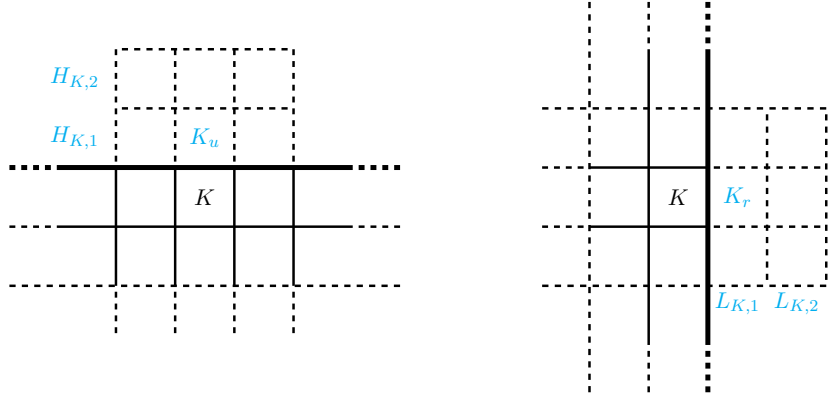}
\caption{Expansion outside a non-corner boundary edge and two basis functions added.}\label{fig:expansion at a non-corner edge}
\end{figure}
$\big\{ \mathcal{M}_{K} \big\}_{K\in \mathcal{K}_{h}^{i}}$ represents a set of patches within $\Omega$, and $\big\{ \varphi_{K} \big\}_{K\in \mathcal{K}_{h}^{i}}$ is a set of functions in $V_{h0}^{\rm R}$. Traversing all the corner nodes and non-corner boundary edges, we get a set of newly added functions $\{\varphi_{K}\}$ by the expansions below. These functions, if restricted on $\Omega$, are in the space $V_{h}^{\rm R}\backslash V_{h0}^{\rm R}$.
\begin{itemize}
\item Consider a convex corner as shown in Figure~\ref{fig:expansion at a corner} (Left). Let $L_{K,1}$, $L_{K,2}$, $H_{K,1}$, and $H_{K,2}$ be some constants close to   $h_{K}$.  Complete a $3\times 3$ patch, denoted by $\mathcal{M}_{K}$, outside the domain with $K$ as the center. The element to the right of $K$ is denoted as $K_{r}$, the element above it is denoted as $K_{u}$, and the element opposite to $K$ with respect to the corner node is denoted as $K_{ur}$. Adding a layer of rectangles outside $\mathcal{M}_{K}$, we obtain four patches $\mathcal{M}_{K}$, $\mathcal{M}_{K_{r}}$, $\mathcal{M}_{K_{u}}$, and $\mathcal{M}_{K_{ur}}$ associated with this convex corner. And four functions supported on them are denoted as $\varphi_{K}$, $\varphi_{K_{r}}$, $\varphi_{K_{u}}$, and $\varphi_{K_{ur}}$, respectively. 

\item Consider a concave corner as shown in Figure~\ref{fig:expansion at a corner} (Right). We also extend the mesh to get four patches, each of which is centered at $K$, $K_{r}$, $K_{u}$, and an added element $K_{ur}$, and we derive four functions supported on four $3\times 3$ patches correspondingly. 

\item Consider a non-corner boundary edge shown in Figure~\ref{fig:expansion at a non-corner edge} (Left). Let $H_{K,1}$ and $H_{K,2}$ be two arbitrary constants close to the height of $K$. A $3\times 3$ patch $\mathcal{M}_{K}$, is completed outside the domain centered at $K$. The element opposite to $K$ with respect to the non-corner boundary edge is denoted as $K_{u}$. Extending a layer of rectangles outside $\mathcal{M}_{K}$, a $3\times 3$ patch centered at $K_{u}$ is derived and named as $\mathcal{M}_{K_{u}}$.  
Let $\varphi_{K}$ and $\varphi_{K_{u}}$ denote two functions supported on $\mathcal{M}_{K}$ and $\mathcal{M}_{K_{u}}$, respectively. Similar operations are conducted on the non-corner boundary edge in the vertical direction; see Figure~\ref{fig:expansion at a non-corner edge} (Right). 
\end{itemize}

The above expanding operations are carried out locally, by which each element in $\mathcal{G}_{h}$ can be located in the supports of nine functions. For each boundary cell $K$, the choice of $L_{K,1}$, $L_{K,2}$, $H_{K,1}$, and $H_{K,2}$ appeared in Figures~\ref{fig:expansion at a corner} and \ref{fig:expansion at a non-corner edge}
can be freely determined only according to the size of $K$, such that \eqref{eq:regularity} still holds. Let $\mathcal{K}_{h}^{\rm ex}$ be the set of all newly added elements near corner nodes and non-corner boundary edges, such as $K_{u}$, $K_{r}$, $K_{ur}$ in Figures~\ref{fig:expansion at a corner} and \ref{fig:expansion at a non-corner edge}. Denote $\mathcal{B}_{h} = \mathcal{K}_{h}^{b} \cup \mathcal{K}_{h}^{\rm ex}$. Then $\big\{ \mathcal{M}_{K} \big\}_{K \in \mathcal{B}_{h}}$ consists of patches not completely contained in $\Omega$. Define $\mathcal{J}_{h}: = \mathcal{K}_{h}^{i} \cup \mathcal{B}_{h}$. In the spirit of Theorem 17 in \cite{Shuo.Zhang2020}, we have
\begin{align}
V_{h}^{\rm{R}}  & = {\rm span}\big\{\varphi_{K}|_{\Omega}: K \in \mathcal{J}_{h}\big\};\\
V_{h0}^{\rm{R}}  & = {\rm span}\big\{\varphi_{K}: K \in \mathcal{K}_{h}^{i}\big\}.
\end{align}
Here $\big\{\varphi_{K} \big\}_{K\in \mathcal{K}_{h}^{i}}$ is a set of linearly independent basis functions in $V_{h0}^{\rm R}$,  and ${\rm dim}(V_{h0}^{\rm R}) = \#(\mathcal{K}_{h}^{i})$. Whereas, $\big\{\varphi_{K} \big\}_{K\in \mathcal{J}_{h}}$ is linearly dependent when these functions are restricted in $\Omega$; see Lemma~\ref{lem:property of phi}~$(c)$.
%
\begin{figure}[!htbp]
\centering
\includegraphics[height=0.32\hsize]{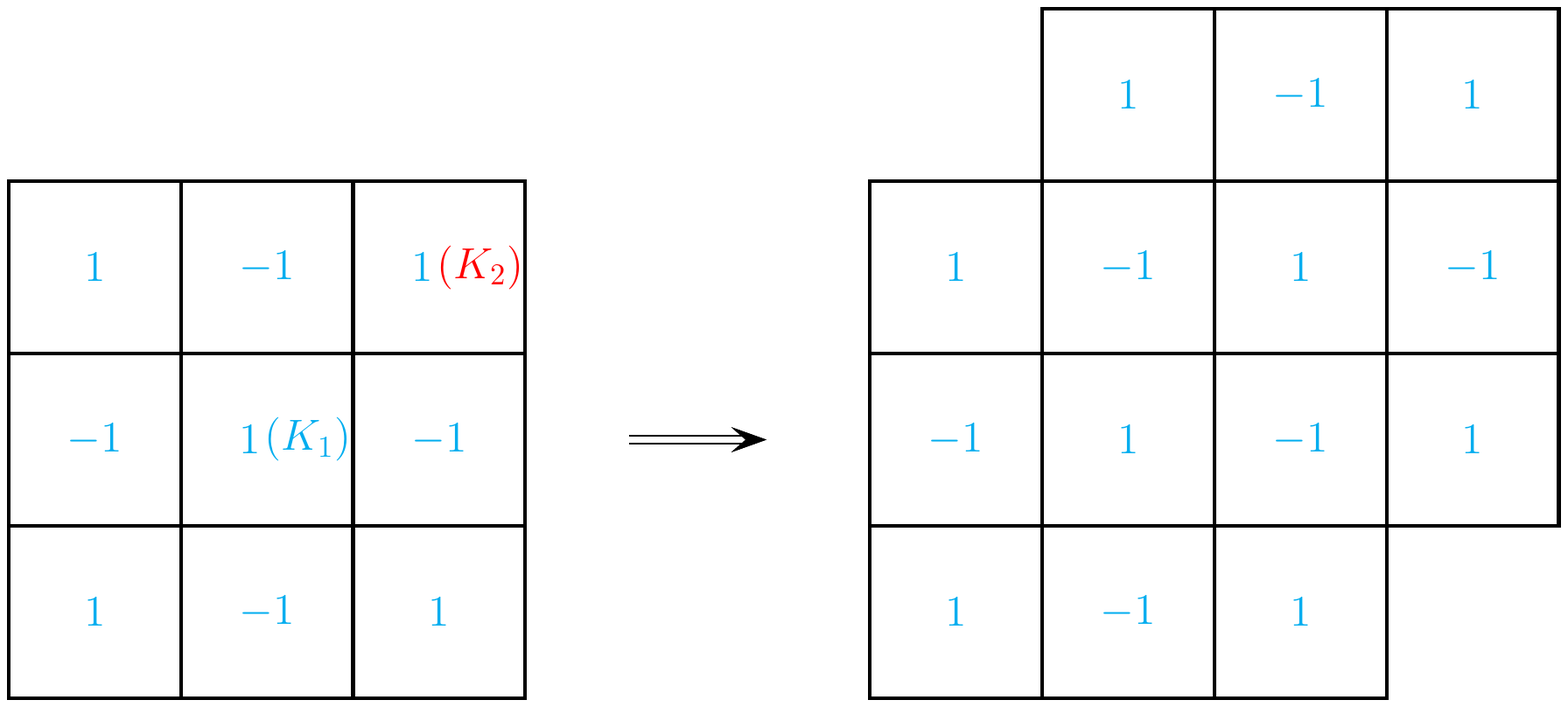}
\caption{Illustration of checkerboard distribution patterns.}\label{fig:checkerboard}
\end{figure}
%
\begin{lemma}\label{lem:property of phi}
Let $\widetilde{\Omega}_{h} = \cup_{K\in \mathcal{J}_{h}}\mathcal{M}_{K}$. The set of functions $\big\{\varphi_{K}\big\}_{K \in \mathcal{J}_{h}}$ have the following properties:
\begin{itemize}
\item[(a)] for any $v\in Q_{1}(\widetilde{\Omega}_{h})$, it holds that
$$
\sum_{K\in \mathcal{J}_{h}}v(c_{K})\varphi_{K}(x,y) = v \quad \mbox{and} \quad \sum_{K\in \mathcal{J}_{h}}(\fint_{K} v{\,d x d y }) \varphi_{K} = v, \quad \forall (x,y)\in \Omega;
$$
\item[(b)] for any $v \in P_{2}(\widetilde{\Omega}_{h})$, it holds that
\begin{align*}
&\sum\limits_{K\in \mathcal{J}_{h}}r_{K}(v)\varphi_{K}(x,y) = v, \ \mbox{with} \  r_{K}(v) = v(c_{K}) - \tfrac{1}{8}\big(L_{K}^{2}  \tfrac{\partial^{2}v}{\partial x^{2}} + H_{K}^{2}\tfrac{\partial^{2}v}{\partial y^{2}} \big), \quad \forall (x,y)\in \Omega,
\\
& \sum\limits_{K\in \mathcal{J}_{h}}t_{K}(v)\varphi_{K}(x,y)=v, \ \mbox{with} \   t_{K}(v) = \fint_{K}v{\,d x d y}  - \tfrac{1}{6}\big(L_{K}^{2}\tfrac{\partial^{2}v}{\partial x^{2}}  + H_{K}^{2}\tfrac{\partial^{2}v}{\partial y^{2}} \big), \quad \forall (x,y)\in \Omega;
\end{align*}
\item[(c)] 
with a set of coefficients $\mathcal{C}_{\mathcal{J}_{h}} = \{d_{K}\}_{K\in\mathcal{J}_{h}}$, named as a checkerboard coefficients set, which satisfies: 
(i) $d_{K} = \pm 1$, (ii) $d_{K_{l}} = d_{K_{r}} = d_{K_{d}} = d_{K_{u}} = d_{K}, \ \forall d_{K} \in \mathcal{C}_{\mathcal{J}_{h}}$ (see Figure~\ref{fig:checkerboard}),
it holds that 
\begin{align*}
\sum\limits_{K\in \mathcal{J}_{h}} d_{K} L_{K}H_{K}\varphi_{K}(x,y) = 0, \quad \forall (x,y)\in \Omega.
\end{align*}
\end{itemize}
\end{lemma}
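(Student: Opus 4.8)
The plan is to prove the three parts of Lemma~\ref{lem:property of phi} by reducing them to the already-established local statements in Proposition~\ref{pro:property of phi 5x5}, together with the boundary-expansion constructions described between Remark~\ref{rem:adjoint basis property} and the lemma. The essential observation is that all three identities are assertions about the restriction of $\sum_{K} c_K(v)\varphi_K$ to a single (interior) cell $T\in\mathcal{G}_h$, and each side of each identity is a $P_2$-polynomial on $T$; hence it suffices to check that the two sides agree at the four vertices of $T$ and that their normal derivatives agree at the four edge-midpoints of $T$ — precisely the degrees of freedom pinning down $P_T^{\rm M}$-functions restricted to a cell. For a fixed $T$, only the nine basis functions $\{\varphi_S : S\in A_T\}$ are nonzero on $T$ (those whose $3\times 3$ patch contains $T$), so the global sum collapses to the local sum of Proposition~\ref{pro:property of phi 5x5}. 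Thus parts (a) and (b) follow cell-by-cell directly from parts (a) and (b) of that proposition, and part (c) from part (c), once we verify that the sign conventions match.

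First I would make the reduction precise: fix $T\in\mathcal{G}_h$, identify $A_T=\{S\in\mathcal{J}_h : T\subset \operatorname{supp}\varphi_S\}$, note $\#A_T=9$, and record that on $T$ each $\varphi_S|_T\in P_2(T)$ while the right-hand sides $v|_T$, $0$ are in $P_2(T)$ as well. Since two functions in $P_T^{\rm M}$ (a fortiori in $P_2(T)$) that share all four vertex values and all four edge-midpoint normal-derivative values coincide, and since $\sum_{K\in\mathcal{J}_h}c_K(v)\varphi_K|_T=\sum_{S\in A_T}c_S(v)\varphi_S|_T$, the global identities on $T$ are exactly the local identities of Proposition~\ref{pro:property of phi 5x5} applied with $A_K:=A_T$. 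Letting $T$ range over all cells of $\mathcal{G}_h$ covering $\Omega$ gives the claims on all of $\Omega$. The functionals $r_K,t_K$ in the lemma are literally the functionals $r_T,t_T$ from the proposition (now indexed by the patch-center), so no recomputation is needed; for $v\in Q_1$ one uses that $\partial^2 v/\partial x^2=\partial^2 v/\partial y^2=0$, recovering the two $Q_1$-identities of part (a) as the special case of part (b).

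The one point requiring genuine care is the treatment of boundary cells and of the expansion elements in $\mathcal{J}_h=\mathcal{K}_h^i\cup\mathcal{B}_h$. Proposition~\ref{pro:property of phi 5x5} is stated under the hypothesis that the $5\times 5$ patch centered at $K$ lies in $\Omega$, which fails for cells near $\partial\Omega$; here I would invoke the expansion construction (Figures~\ref{fig:expansion at a corner}–\ref{fig:expansion at a non-corner edge}) which enlarges the mesh so that every cell $T\in\mathcal{G}_h$, including boundary cells, sits in the supports of exactly nine functions $\{\varphi_S:S\in A_T\}$ drawn from $\{\varphi_K\}_{K\in\mathcal{J}_h}$ — the added functions being supported on patches $\mathcal{M}_K$, $K\in\mathcal{B}_h$, that protrude outside $\Omega$ but are honest $V^{\rm R}_{h0}(\mathcal{M}_K)$-functions on their (extended) patches. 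The algebraic identities of Proposition~\ref{pro:property of phi 5x5} are purely local and depend only on the $3\times3$-patch geometry encoded in $\gamma_x,\gamma_y$ and the lengths $L,H$; they hold verbatim on the extended mesh, and restricting to $\Omega$ preserves them. Remark~\ref{rem:adjoint basis property} is exactly what guarantees that the contributions are insensitive to the free outer lengths $L_{K,1},H_{K,1}$ etc. chosen in the expansion, so the identities are well-posed.

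For part (c), the main obstacle is bookkeeping the sign field $\{d_K\}$: Proposition~\ref{pro:property of phi 5x5}(c) gives, for each fixed $T$, a local cancellation $\sum_{S\in A_T}d_S^{(T)}L_SH_S\varphi_S|_T=0$ with the checkerboard pattern $d_T^{(T)}=\pm1$, nearest neighbors $-d_T^{(T)}$, diagonal neighbors $+d_T^{(T)}$. The global checkerboard set $\mathcal{C}_{\mathcal{J}_h}=\{d_K\}$ is defined by $d_{K_l}=d_{K_r}=d_{K_d}=d_{K_u}=d_K$ (Figure~\ref{fig:checkerboard}): I would observe that, up to the overall sign $d_T$, this global pattern restricted to any $3\times3$ neighborhood $A_T$ reproduces the local pattern of the proposition (nearest neighbors opposite sign, diagonal neighbors same sign as center — note the global ``color'' of $K_l$ equals that of $K_d$ equals that of $K$ only if one colors by $(i+j)$ parity, which is exactly the checkerboard coloring, so the four edge-neighbors of $T$ all carry the sign $-d_T$ in the normalization $d_T^{(T)}=d_T$). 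Hence $\sum_{K\in\mathcal{J}_h}d_KL_KH_K\varphi_K|_T = d_T\sum_{S\in A_T}d_S^{(T)}L_SH_S\varphi_S|_T = 0$ on every cell $T$, and since $\Omega$ is covered by such cells the global identity follows. The existence of a consistent global two-coloring $\{d_K\}$ on $\mathcal{J}_h$ is the only structural input, and it is immediate from the rectangular grid structure; I would just remark that it is the standard checkerboard coloring extended across the boundary-expansion elements.
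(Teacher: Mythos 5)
Your proposal is correct and follows essentially the same route as the paper: reduce to a cell-by-cell check, apply Proposition~\ref{pro:property of phi 5x5} on cells whose enlarged neighbourhood lies in $\Omega$, and use the boundary-expansion construction together with Remark~\ref{rem:adjoint basis property} (insensitivity to the free outer lengths $L_{K,1}$, $L_{K,2}$, etc.) to cover the outermost two layers. Your reading of the sign condition in part (c) as the standard parity (checkerboard) colouring --- edge-neighbours opposite in sign to the centre, diagonal neighbours equal --- is the intended one and is what matches Proposition~\ref{pro:property of phi 5x5}(c), even though condition (ii) as printed literally reads $d_{K_l}=d_K$.
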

\begin{proof}
It is equivalent to prove these results on each element in $\mathcal{G}_{h}$. By Proposition~\ref{pro:property of phi 5x5}, we only have to verify these equalities for the outermost two layers of elements in $\mathcal{G}_{h}$. Notice that the expanding operations are carried out locally, and each element in $\mathcal{G}_{h}$ is located in the supports of nine functions  $\big\{\varphi_{K}\big\}_{K\in \mathcal{J}_{h}}$. Take a right boundary as an example; see Figure~\ref{fig:expansion at a non-corner edge} (Right). According to Remark~\ref{rem:adjoint basis property}, the choices of $L_{K,1}$ and $L_{K,2}$ do not affect the values of $\big(r_{K}(v)\varphi_{K} + r_{K_{r}}(v)\varphi_{K_{r}}\big)|_{\Omega}$,  $\big(t_{K}(v)\varphi_{K} + t_{K_{r}}(v)\varphi_{K_{r}}\big)|_{\Omega}$, and $\big(d_{K}L_{K}H_{K}\varphi_{K} + d_{K_{r}}L_{K_{r}}H_{K_{r}}\varphi_{K_{r}}\big)|_{\Omega}$. Therefore, although these boundary elements on the same column may be extended outside $\Omega$ with different lengths, properties (a)-(c) stated in Property~\ref{pro:property of phi 5x5} is also true for elements 
located in the right outermost two layers of $\mathcal{G}_{h}$. The case of other boundaries can be verified similarly. 
\end{proof}
\begin{proposition}
Let $\varphi_{K}$ be the function supported on $\mathcal{M}_{K}$. It holds that
\begin{equation}\label{eq:normEstimate}
\big|\varphi_{K}|_{k,T}\leq C_{\gamma_{0}}h_{T}^{1-k}, \quad \forall T \subset \mathcal{M}_{K}, 
\end{equation}
where $C_{\gamma_{0}}$ represents a positive constant only dependent on the regularity constant $\gamma_{0}$.
\end{proposition}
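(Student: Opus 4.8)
The plan is to prove the bound \eqref{eq:normEstimate} by a standard scaling argument, reducing everything to a single reference configuration on which $\varphi_K$ is an explicit fixed polynomial. First I would fix $T\subset\mathcal M_K$; by the local quasi-uniformity implied by the regularity assumption \eqref{eq:regularity}, the nine cells of $\mathcal M_K$ all have diameters comparable to $h_K$, and in particular $h_T\cequiv h_K$, with the implied constants depending only on $\gamma_0$. It therefore suffices to bound $|\varphi_K|_{k,T}$ by $C_{\gamma_0}h_K^{1-k}$. On $T$, the function $\varphi_K$ is a single polynomial in $P_2(T)$ (or identically zero if $T\not\subset\mathcal M_K$, which is excluded), so $|\varphi_K|_{k,T}=0$ for $k\ge 3$ and we only need $k=0,1,2$.

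Next I would set up the affine pull-back. Let $F_K:\widehat{\mathcal M}\to\mathcal M_K$ be the affine map carrying a reference $3\times3$ patch $\widehat{\mathcal M}$ (say with center cell $[0,1]^2$ and the surrounding ratios $L_{K,\pm1}/L_K$, $H_{K,\pm1}/H_K$ as the only free shape parameters) onto $\mathcal M_K$; because the grid is rectangular and the map is a pure dilation/translation, the Jacobian factors are $L_K$ in $x$ and $H_K$ in $y$, both $\cequiv h_K$ by regularity. Let $\widehat\varphi=\varphi_K\circ F_K$. By the chain rule and the usual equivalence of Sobolev seminorms under affine equivalence, one gets
\begin{align*}
|\varphi_K|_{k,T}\lesssim h_K^{1-k}\,|\widehat\varphi|_{k,\widehat T},
\end{align*}
where $\widehat T=F_K^{-1}(T)$ is one of the nine reference cells and the implied constant depends only on $\gamma_0$ (through the bounds on the anisotropy ratio $L_K/H_K$ and on $h_K/\rho_K$). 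So the claim reduces to a uniform bound $|\widehat\varphi|_{k,\widehat T}\le C_{\gamma_0}$, i.e. to controlling the reference function.

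Finally I would control $\widehat\varphi$ on the reference patch. The function $\widehat\varphi$ lies in the one-dimensional space $V^{\rm R}_{h0}(\widehat{\mathcal M})$ and is normalized by its value at the interior vertex $\widehat X_{1,1}$, which by Definition~\ref{def:3x3basis} equals $\frac{L_{K,-1}}{L_{K,-1}+L_K}\cdot\frac{H_{K,-1}}{H_{K,-1}+H_K}\in(0,1)$, hence is bounded. All its remaining degrees of freedom — the vertex values $v^{K}_{m,n}$ and the midpoint derivative values $u^{K}_{m,n},z^{K}_{m,n}$ in \eqref{eq:values1Basis}--\eqref{eq:values3Basis} — are explicit rational expressions in the ratios $\gamma_x^K,\gamma_y^K$ and the reciprocals of the side lengths; under \eqref{eq:regularity} the quantities $\gamma_x^K,\gamma_y^K$ lie in a fixed compact subinterval of $(0,\infty)$ determined by $\gamma_0$, and on the reference patch the side lengths are normalized to be $\cequiv1$, so every nodal value of $\widehat\varphi$ is bounded by a constant depending only on $\gamma_0$. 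Since on each reference cell $\widehat\varphi$ is the $P_2$ interpolant (in the RM sense) of these bounded nodal data, and the nodal basis of $P_2(\widehat T)$ against the functionals ``four vertex values plus four edge-normal-derivative averages'' is a fixed finite set of polynomials, all of whose $H^k$ seminorms on $\widehat T$ are absolute constants, we conclude $|\widehat\varphi|_{k,\widehat T}\le C_{\gamma_0}$. I expect the only mildly delicate point to be making the last step fully rigorous: one must check that the local linear system mapping the RM degrees of freedom on a rectangle to a $P_2$ polynomial is uniformly invertible over the admissible anisotropy range, which again follows from \eqref{eq:regularity} but deserves to be stated; everything else is bookkeeping with the explicit formulas \eqref{eq:values1Basis}--\eqref{eq:values3Basis}.
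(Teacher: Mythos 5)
Your argument is correct and is essentially the paper's proof in scaled form: the paper writes $\varphi_K|_T$ directly in the rectangular Morley nodal basis of $T$, bounds the coefficients by $a_s\le C_{\gamma_0}$ and $b_t\le C_{\gamma_0}h_T^{-1}$ using \eqref{eq:values1Basis}--\eqref{eq:values3Basis}, and invokes the standard scalings $|p_s^{{\rm M}}|_{k,T}\le C_{\gamma_0}h_T^{1-k}$, $|q_t^{{\rm M}}|_{k,T}\le C_{\gamma_0}h_T^{2-k}$, which is exactly your reference-patch computation carried out on the physical cell. The only slip is terminological: the eight functionals (four vertex values plus four edge-normal-derivative means) are unisolvent on the eight-dimensional space $P_2(T)+{\rm span}\{x^3,y^3\}$, not on the six-dimensional $P_2(T)$, so the ``interpolant'' you invoke is the RM interpolant, which reproduces $\varphi_K|_T$ precisely because $P_2(T)\subset P_T^{{\rm M}}$.
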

\begin{proof}
For $T\subset \mathcal{M}_{K}$, $\varphi_{K}|_{T}\in P_{2}(T)$ and it can be written as
\begin{equation}\label{eq:RRMbasisOnK}
\varphi_{K}|_{T} = \sum_{s=1:4}a_{s}p_{s}^{\rm{M}} + \sum_{t=5:8}b_{t}q_{t}^{\rm{M}},
\end{equation}
where $p_{s}^{\rm{M}}$ and $q_{t}^{\rm{M}}$ represent the rectangular Morley basis functions related to nodes and edges of $T$, respectively. 
It is known that
\begin{equation}\label{eq:normRMbasis}
\big|p_{s}^{\rm{M}}\big|_{k,T} \leq C_{\gamma_{0}}h_{T}^{1-k} \quad \mbox{and} \quad  \big|q_{t}^{\rm{M}}\big|_{k,T} \leq C_{\gamma_{0}}h_{T} h_{T}^{1-k}.
\end{equation}
From \eqref{eq:values1Basis}--\eqref{eq:values3Basis}, we have
\begin{equation}\label{eq:valuesOnK}
a_{s} \leq C_{\gamma_{0}}, \quad b_{t} \leq C_{\gamma_{0}}h_{T}^{-1}.
\end{equation}
A combination of \eqref{eq:RRMbasisOnK}, \eqref{eq:normRMbasis}, and \eqref{eq:valuesOnK} leads to the desired result.
\end{proof}
\subsection{Interpolation operator for RRM element space}
We establish an available interpolation operator that is stable and reproduces quadratic polynomial.  Its construction is similar to the quasi-interpolation operators proposed in the spline theory \cite{Wang;Lu1998,Sablonniere.P2003,2Sablonniere.P2003}. As a matter of fact, an interpolation which does not necessarily preserve the entire finite element space and preserving quadratic polynomials is enough for the approximation property.

\begin{definition}\label{def:interpolationVh0}
With the set of functions $\{\varphi_{K}\}_{K\in \mathcal{K}_{h}^{i}}$, we define an interpolation operator for the homogeneous space
\begin{align*}
\Pi_{h0}: L^{1}(\Omega) \rightarrow V_{h0}^{\rm{R}},\quad \Pi_{h0}v = \sum\limits_{K\in \mathcal{K}_{h}^{i}}\lambda_{K}(v)\varphi_{K}(x,y),
\end{align*}
with $\lambda_{K}(v) = \sum\limits_{\mu = 1}^{5}w_{\mu}^{K}(\fint_{S_{\mu}^{K}}v{\,d x d y })$, 
where $\big\{S_{\mu}^{K}\}_{\mu = 1:5}$ are five cells around $K$ (see Figure~\ref{fig:S3x3basis}) and 
\begin{align*}
&w_{1}^{K} = \frac{-L_{K}^{2}}{(L_{K,-1}+L_{K})(L_{K,-1}+L_{K}+L_{K,1})}, \quad w_{2}^{K} = \frac{-L_{K}^{2}}{(L_{K}+L_{K,1})(L_{K,-1}+L_{K}+L_{K,1})},\\
&w_{3}^{K} = \frac{-H_{K}^{2}}{(H_{K,-1}+H_{K})(H_{K,-1}+H_{K}+H_{K,1})}, \quad w_{4}^{K} = \frac{-H_{K}^{2}}{(H_{K}+H_{K,1})(H_{K,-1}+H_{K}+H_{K,1})},\\
& w_{5}^{K} = 1 - (w_{1}^{K} + w_{2}^{K}  + w_{3}^{K}  + w_{4}^{K}).
\end{align*}
\begin{figure}[!htbp]
\centering
\includegraphics[height=0.34\hsize]{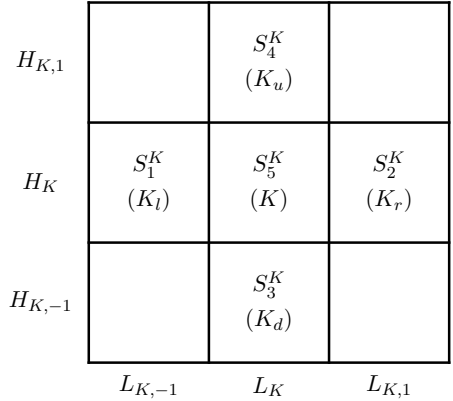}
\caption{Integral mean values on five elements around $K$.}\label{fig:S3x3basis}
\end{figure}
\end{definition}
The main result of this subsection is the theorem below.
\begin{theorem}\label{thm:approxH02}
Let $v\in H^{2}_{0}(\Omega) \cap H^{s}(\Omega)$, $2\leq s \leq 3$. Then $|v-\Pi_{h0}v|_{k,h}\lesssim h^{s-k}|v|_{s,\Omega}$ with $ 0\leq k \leq s.$ Morever, if $v\in H^{1}_{0}(\Omega)$, it also holds that $|v-\Pi_{h0}v|_{k,h}\lesssim h^{1-k}|v|_{1,\Omega}$, with $ k = 0,1.$
\end{theorem}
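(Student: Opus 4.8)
The strategy is to prove the estimate cellwise, distinguishing a ``deep interior'', where $\Pi_{h0}$ reproduces $P_2$ exactly, from a boundary layer of width $\mathcal O(h)$, where reproduction is unavoidably lost and the homogeneous boundary conditions of $v$ must be exploited; the cellwise bounds are then summed using the bounded overlap of the macro-patches, with all constants kept independent of $h$ and $\varepsilon$ by virtue of local quasi-uniformity \eqref{eq:regularity}. The first ingredient is $L^2$-stability. Reading the weights $w_\mu^K$ off Definition~\ref{def:interpolationVh0} and using \eqref{eq:regularity} gives $|w_\mu^K|\le C_{\gamma_0}$, hence $|\lambda_K(v)|\lesssim\sum_\mu|S_\mu^K|^{-1/2}\|v\|_{0,S_\mu^K}\lesssim h_K^{-1}\|v\|_{0,\omega_K}$ with $\omega_K:=\bigcup_\mu S_\mu^K$; combining this with the basis estimate \eqref{eq:normEstimate}, with $h_K\cequiv h_T$ for cells in a common patch, and with the fact that each cell meets only $\mathcal O(1)$ supports, yields
\[
|\Pi_{h0}v|_{k,T}\ \lesssim\ h_T^{-k}\,\|v\|_{0,\widehat\omega_T},\qquad 0\le k\le 2,
\]
where $\widehat\omega_T$ is a fixed-size union of cells around $T$.

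Next, polynomial reproduction. A direct computation on the monomials $1,x,y,xy,x^2,y^2$ shows that the weights in Definition~\ref{def:interpolationVh0} are exactly those for which $\lambda_K(q)=t_K(q)$ for every $q\in P_2$, with $t_K$ the functional of Lemma~\ref{lem:property of phi}(b). Consequently Lemma~\ref{lem:property of phi}(b) gives, for $q\in P_2(\widetilde\Omega_h)$, the identity $q-\Pi_{h0}q=\sum_{K\in\mathcal B_h}t_K(q)\varphi_K$ on $\Omega$; in particular $\Pi_{h0}q\equiv q$ on any cell $T$ whose $3\times3$ patch contains no cell of $\mathcal B_h$, i.e.\ on any cell at distance at least two layers from $\partial\Omega$. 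For such a cell, taking $q_T\in P_2$ to be an averaged Taylor polynomial of $v$ on $\widehat\omega_T$, we have $v-\Pi_{h0}v=(v-q_T)-\Pi_{h0}(v-q_T)$ on $T$, so the stability bound and the Bramble--Hilbert lemma give
\[
|v-\Pi_{h0}v|_{k,T}\ \lesssim\ |v-q_T|_{k,T}+h_T^{-k}\|v-q_T\|_{0,\widehat\omega_T}\ \lesssim\ h^{s-k}|v|_{s,\widehat\omega_T};
\]
squaring, summing over the deep interior, and using the bounded overlap of the $\widehat\omega_T$ proves the claim there ($k=3$, possible only for $s=3$, being trivial since $\Pi_{h0}v$ is piecewise quadratic). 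The $H^1_0$ variant is identical, comparing instead with a local constant.

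The main obstacle is the boundary layer $\Sigma$, the union of cells within $\mathcal O(1)$ layers of $\partial\Omega$. On such a cell $T$ every $\varphi_K$ with $K\in\mathcal K_h^i$ that is nonzero on $T$ is centred at an interior cell, hence vanishes at the boundary vertices of $T$; thus no nonzero polynomial is reproduced on $T$ and the previous argument breaks. Instead I bound crudely, $|v-\Pi_{h0}v|_{k,T}\le|v|_{k,T}+|\Pi_{h0}v|_{k,T}\lesssim|v|_{k,T}+h_T^{-k}\|v\|_{0,\widehat\omega_T}$, sum over $\Sigma$ (bounded overlap, and $\Sigma$ of width $\mathcal O(h)$), and recover the powers of $h$ from the boundary data: for $v\in H^2_0(\Omega)$ both $v$ and $\nabla v$ vanish on $\partial\Omega$, so one-dimensional Poincar\'e inequalities transverse to $\partial\Omega$ on the thin strip give $\|v\|_{0,\Sigma}\lesssim h^2|v|_{2,\Omega}$, $|v|_{1,\Sigma}\lesssim h|v|_{2,\Omega}$, and $|v|_{2,\Sigma}\le|v|_{2,\Omega}$, whence $|v-\Pi_{h0}v|_{k,\Sigma}\lesssim h^{2-k}|v|_{2,\Omega}$ (and, with only $v=0$ on $\partial\Omega$ available, $|v-\Pi_{h0}v|_{k,\Sigma}\lesssim h^{1-k}|v|_{1,\Omega}$ for the $H^1_0$ statement). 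With the deep-interior estimate this settles the theorem for $s=2$ and the $H^1_0$ addendum immediately; for $2<s\le3$ the boundary contribution $h^{2-k}|v|_{2,\Omega}$ has still to be matched to $h^{s-k}|v|_{s,\Omega}$, and I expect this to rest on the elementary fact that $P_2\cap H^2_0(\Omega)=\{0\}$ on the polygon $\Omega$ — so that $|v|_{2,\Omega}\lesssim|v|_{3,\Omega}$ on $H^2_0(\Omega)\cap H^3(\Omega)$ — together with the precise way the estimate is used in Section~\ref{sec:convergence}. Beyond organizing this interior/boundary split, the only delicate bookkeeping is to verify that every constant above depends solely on $\gamma_0$ and on the shape of $\Omega$, which is precisely where the regular-family hypothesis \eqref{eq:regularity} enters.
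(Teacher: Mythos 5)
Your interior/boundary split, the $L^2$-stability bound, and the verification that $\lambda_K$ reproduces $t_K$ on $P_2$ all match the paper's ingredients (the paper routes the interior part through an extension operator $E$ and the auxiliary operator $\widetilde\Pi_h$, but that is cosmetic). The genuine gap is in your boundary-layer estimate for $2<s\le 3$. The crude bound $|v-\Pi_{h0}v|_{k,T}\lesssim|v|_{k,T}+h_T^{-k}\|v\|_{0,\widehat\omega_T}$ combined with transverse Poincar\'e inequalities on the $\mathcal O(h)$-strip caps the boundary contribution at $h^{2-k}|v|_{2,\Omega}$: since only $v$ and $\nabla v$ vanish on $\partial\Omega$, $\|v\|_{0,\Sigma}$ cannot be improved beyond $\mathcal O(h^2)$, and this is one full power of $h$ short of the required $h^{3-k}|v|_{3,\Omega}$ when $s=3$. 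Your proposed repair --- $|v|_{2,\Omega}\lesssim|v|_{3,\Omega}$ on $H^2_0(\Omega)\cap H^3(\Omega)$ --- is a fixed-domain norm inequality (true by compactness, since $P_2\cap H^2_0=\{0\}$) and therefore produces no additional factor of $h$; it turns $h^{2-k}|v|_{2,\Omega}$ into $h^{2-k}|v|_{3,\Omega}$, not $h^{3-k}|v|_{3,\Omega}$. Since the $s=3$ case is exactly the one invoked in \eqref{lem:approx in energy norm} to obtain the $h(h+\varepsilon)|w|_{3,\Omega}$ bound, this cannot be waved away.

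The paper closes this gap by estimating the offending boundary coefficients directly rather than by a triangle inequality. On a boundary cell the discrepancy is $\sum_{K\in\mathcal J_h\setminus\mathcal K_h^i}\lambda_K(Ev)\varphi_K$; after inserting a local $p\in P_2$, the key step is the identity \eqref{eq:lambdaP}, which rewrites $\lambda_K(p)$ as a Taylor expansion of $p$ about a boundary vertex $a_K$. Each term $p(a_K)$, $\partial_xp(a_K)$, $\partial_yp(a_K)$, $\partial^2_{xy}p$ is then small because the corresponding derivative of $v$ vanishes on the boundary edge $e_K$ (note that $v=\partial_{\mathbf n}v=0$ on an axis-parallel edge forces $\partial^2_{xy}v=0$ there as well, a fact your argument never uses), so via Lemma~\ref{lem:NormEquivalence} and the trace inequality each term is controlled by the local best-approximation error $h^{s-j}|v|_{s}$ of the $j$-th derivative, yielding $|\lambda_K(p)|\lesssim h^{s-1}|v|_{s,Q_K}$ and hence the full rate $h^{s-k}$. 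Your write-up is complete and correct for $s=2$ and for the $H^1_0$ addendum, but for $s=3$ you must replace the crude boundary bound by an argument of this finer type.
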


We postpone the proof of Theorem \ref{thm:approxH02} after several technical lemmas. We firstly introduce an auxiliary operator $\widetilde{\Pi}_{h}$.

\begin{definition}\label{def:interpolationVh}
Let $\widetilde{\Omega}_{h} = \cup_{K\in \mathcal{J}_{h}}\mathcal{M}_{K}$. With the set of functions $\big\{\varphi_{K}\big\}_{K\in \mathcal{J}_{h}}$, we define an interpolation operator 
\begin{align*}
&\widetilde{\Pi}_{h}: L^{1}(\widetilde{\Omega}_{h}) \rightarrow V_{h}^{\rm{R}}, \quad \widetilde{\Pi}_{h}(v) = \sum\limits_{K\in \mathcal{J}_{h}}\lambda_{K}(v)\varphi_{K}(x,y).
\end{align*}
\end{definition}

Since every functional $\lambda_{K}(v)$ in Definitions~ \ref{def:interpolationVh} and \ref{def:interpolationVh0} only involves the information of $v$ within $\mathcal{M}_{K}$, operators $\widetilde{\Pi}_{h}$ and $\Pi_{h0}$ define local approximation schemes. The interpolation $\widetilde{\Pi}_{h}(v)$ involves information of $v$ outside $\Omega$, and the difference between $(\widetilde{\Pi}_{h}v)|_{\Omega}$ and $(\Pi_{h0}v)|_{\Omega}$ only lies in some elements near $\partial\Omega$. 
\begin{lemma}\label{lem:preserveP2}%
Interpolations $\widetilde{\Pi}_{h}$ and $\Pi_{h0}$ preserve quadratic functions, namely,
\begin{itemize}
\item[(a)]
for any $T\in \mathcal{G}_{h}$, $(\widetilde{\Pi}_{h}v)|_{T} = v|_{T}$ with $v\in P_{2}(\widetilde{\Omega}_{h})$;
\item[(b)]
if $T\in \mathcal{G}_{h}$ and it satisfies $\#\big\{\mathcal{M}_{K}:\ \mathcal{M}_{K}\cap \mathring{T} \ne \varnothing, K\in \mathcal{K}_{h}^{i}\big\} = 9$, then $(\Pi_{h0}v)|_{T} = v|_{T}$ with $v\in P_{2}(\Omega)$.
\end{itemize}
\end{lemma}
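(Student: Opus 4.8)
The plan is to reduce both assertions to the polynomial‑reproduction identities already in hand, namely Lemma~\ref{lem:property of phi}(b) and Proposition~\ref{pro:property of phi 5x5}(b), which give $\sum_{K\in\mathcal{J}_h}t_K(v)\varphi_K = v$ on $\Omega$ (and its nine‑term local counterpart on a central cell) with $t_K(v) = \fint_K v\,dx\,dy - \tfrac16\bigl(L_K^2\partial_{xx}v + H_K^2\partial_{yy}v\bigr)$. Thus the whole problem boils down to the single algebraic fact that on $P_2$ the averaging functional $\lambda_K$ of Definition~\ref{def:interpolationVh0} agrees with $t_K$: $\lambda_K(v) = t_K(v)$ for every $K\in\mathcal{J}_h$ and every quadratic $v$.

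To establish $\lambda_K = t_K$ on $P_2$ I would work by linearity on the monomials $\{1,x,y,xy,x^2,y^2\}$, relying on two elementary observations: (i) for a quadratic $p = a+bx+cy+dxy+ex^2+fy^2$ and a cell $S$ with centre $c_S$ and side lengths $L_S, H_S$, one has $\fint_S p\,dx\,dy = p(c_S) + \tfrac1{12}\bigl(eL_S^2 + fH_S^2\bigr)$; and (ii) in the five‑cell (cross‑shaped) stencil $\{S_\mu^K\}_{\mu=1:5}$ the two cells above/below $K$ share its width $L_K$ while the two cells left/right share its height $H_K$. The weights satisfy $\sum_\mu w_\mu^K = 1$ and are chosen so that $\sum_\mu w_\mu^K(x_{S_\mu^K}-x_K) = \sum_\mu w_\mu^K(y_{S_\mu^K}-y_K) = 0$; together with (ii) this makes $\lambda_K(v) = v(c_K) = t_K(v)$ for $v\in\{1,x,y,xy\}$. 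The only genuine computation is $v = x^2$ (and symmetrically $v = y^2$): inserting the explicit $w_\mu^K$, using $w_5^K + w_3^K + w_4^K = 1 - w_1^K - w_2^K$, and simplifying $\bigl(\tfrac{L_K+L_{K,\ell}}{2}\bigr)^2 + \tfrac{L_{K,\ell}^2}{12} - \tfrac{L_K^2}{12} = \tfrac16(L_K+L_{K,\ell})(L_K+2L_{K,\ell})$ for $\ell = \pm1$, the telescoping collapses to $\lambda_K(x^2) = \tfrac{L_K^2}{12} - \tfrac{L_K^2}{3} = -\tfrac{L_K^2}{4} = t_K(x^2)$.

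Granting this, part~(a) is immediate: for $v\in P_2(\widetilde{\Omega}_h)$, $\widetilde{\Pi}_h v = \sum_{K\in\mathcal{J}_h}\lambda_K(v)\varphi_K = \sum_{K\in\mathcal{J}_h}t_K(v)\varphi_K = v$ on $\Omega$ by Lemma~\ref{lem:property of phi}(b), hence on each $T\in\mathcal{G}_h$. Part~(b) follows by locality of the supports: since $\operatorname{supp}\varphi_K = \mathcal{M}_K$, on a fixed cell $T$ only those $\varphi_K$ with $T\subset\mathcal{M}_K$, equivalently $\mathcal{M}_K\cap\mathring T\ne\varnothing$, are nonzero, and for a cell far enough inside there are exactly nine such patches; the hypothesis $\#\{\mathcal{M}_K:\ \mathcal{M}_K\cap\mathring T\ne\varnothing,\ K\in\mathcal{K}_h^i\} = 9$ guarantees all nine are centred at interior cells (and hence lie in $\Omega$), so on $T$ the truncated sum $\Pi_{h0}v = \sum_{K\in\mathcal{K}_h^i}\lambda_K(v)\varphi_K$ coincides with the nine‑term sum of Proposition~\ref{pro:property of phi 5x5}(b) and equals $v|_T$ for $v\in P_2(\Omega)$. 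The only real work is the monomial check, and within it the single case $v = x^2$; this is precisely where the specific weights $w_\mu^K$ of Definition~\ref{def:interpolationVh0} enter essentially, so I expect it to be the main (albeit routine) obstacle, everything else being bookkeeping with already‑proved identities and with the locality of the basis supports.
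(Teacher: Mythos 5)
Your proposal is correct and follows essentially the same route as the paper: both reduce the lemma to the identity $\lambda_K(v)=t_K(v)$ for $v\in P_2$ (the paper asserts this via ``difference theory,'' you verify it monomial by monomial, correctly) and then invoke Lemma~\ref{lem:property of phi}(b), with part (b) following because the hypothesis forces the nine relevant patches on $T$ to be centred at interior cells so that $\Pi_{h0}$ and $\widetilde{\Pi}_h$ coincide there. Your explicit computation for $v=x^2$ checks out ($\lambda_K(x^2)=\tfrac{L_K^2}{12}-\tfrac{L_K^2}{3}=-\tfrac{L_K^2}{4}=t_K(x^2)$), so the argument is complete.
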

\begin{proof}
\noindent $(a)$ By Lemma~\ref{lem:property of phi} and the difference theory, we replace the second derivatives appearing in the expression of $t_{K}(v)$ with a weighted sum of five integral mean values around $K$, where the weights are computed to be $\big\{w_{\mu}^{K}\big\}_{\mu = 1:5}$. That is to say,  for $v\in P_{2}(\widetilde{\Omega}_{h})$, it holds that $\lambda_{K}(v) = t_{K}(v)$. Therefore, $(\widetilde{\Pi}_{h} v)|_{T} = v|_{T}$ for any $T \in \mathcal{G}_{h}$.

\noindent $(b)$
The condition of $\#\big\{\mathcal{M}_{K}:\ \mathcal{M}_{K}\cap \mathring{T} \ne \varnothing, K\in \mathcal{K}_{h}^{i}\big\} = 9$ is to ensure $V_{h0}^{\rm R}\big|_{T} = V_{h}^{\rm R}\big|_{T}$, and the result is direct obtained from the proof in (a).
\end{proof}
\begin{lemma}\label{lem:stability}
Let $T\in \mathcal{G}_{h}$. Denote $\Delta_{T}  := \cup_{ \substack{ \mathcal{M}_{K}\supset T \\ K \in \mathcal{J}_{h}} } \mathcal{M}_{K}$.  Then $\widetilde{\Pi}_{h}$ is stable on $T$, i.e.,
$
|\widetilde{\Pi}_{h}v|_{k,T} \lesssim h_{T}^{-k}\|v\|_{0,\Delta_{T}},\mbox{ with } k = 0, 1, 2.
$
\end{lemma}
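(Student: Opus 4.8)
The plan is to reduce the local stability estimate to a finite combination of elementwise estimates of the basis functions $\varphi_K$, using the explicit formula for $\widetilde{\Pi}_h$ together with the scaling bound \eqref{eq:normEstimate}. First I would observe that, by definition, $\widetilde{\Pi}_h v = \sum_{K\in\mathcal{J}_h}\lambda_K(v)\varphi_K$, and that on the fixed cell $T$ only those terms with $\mathcal{M}_K\supset T$ contribute; by the local quasi-uniformity implied by \eqref{eq:regularity} there are at most a fixed number (nine) of such indices $K$, and for each of them $\mathcal{M}_K\subset\Delta_T$. Hence
\begin{align*}
|\widetilde{\Pi}_h v|_{k,T}\;\le\;\sum_{\substack{\mathcal{M}_K\supset T\\ K\in\mathcal{J}_h}}|\lambda_K(v)|\,|\varphi_K|_{k,T}.
\end{align*}

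The second step is to bound each scalar $\lambda_K(v)$. Since $\lambda_K(v)=\sum_{\mu=1}^5 w_\mu^K\big(\fint_{S_\mu^K}v\,dxdy\big)$, I would bound the average of $v$ over each patch cell $S_\mu^K$ by $|S_\mu^K|^{-1/2}\|v\|_{0,S_\mu^K}$ via Cauchy--Schwarz, note $S_\mu^K\subset\mathcal{M}_K\subset\Delta_T$ and $|S_\mu^K|\gtrsim h_T^2$ by shape-regularity/local quasi-uniformity, and check that the weights $w_\mu^K$ are bounded by a constant depending only on $\gamma_0$ — this is immediate from their explicit rational expressions in the patch lengths and heights, whose ratios are controlled by $\gamma_0$. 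This gives $|\lambda_K(v)|\lesssim h_T^{-1}\|v\|_{0,\Delta_T}$.

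The third step is to combine: using \eqref{eq:normEstimate}, $|\varphi_K|_{k,T}\le C_{\gamma_0}h_T^{1-k}$ (here $h_T\cequiv h_K$ because $T\subset\mathcal{M}_K$ and the patch is locally quasi-uniform), so each of the finitely many terms contributes $\lesssim h_T^{-1}\|v\|_{0,\Delta_T}\cdot h_T^{1-k}=h_T^{-k}\|v\|_{0,\Delta_T}$, and summing the bounded number of terms yields $|\widetilde{\Pi}_h v|_{k,T}\lesssim h_T^{-k}\|v\|_{0,\Delta_T}$ for $k=0,1,2$, as claimed.

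I do not expect any genuine obstacle here; the only points requiring care are purely bookkeeping: verifying that the number of indices $K$ with $\mathcal{M}_K\supset T$ is uniformly bounded and that all relevant patches lie in $\Delta_T$ (so that no information outside $\Delta_T$ is used), and checking the $\gamma_0$-uniform boundedness of the weights $w_\mu^K$ and of the ratios $h_T/h_K$ — all of which follow from the regularity assumption \eqref{eq:regularity} and the local construction of the patches. The mild subtlety is that for $T$ near $\partial\Omega$ some patches $\mathcal{M}_K$ stick out of $\Omega$, but this is harmless since $\widetilde{\Pi}_h$ is defined on $L^1(\widetilde{\Omega}_h)$ and $\Delta_T\subset\widetilde{\Omega}_h$ is exactly the union of those extended patches.
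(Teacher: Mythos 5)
Your proposal is correct and follows essentially the same route as the paper's proof: both reduce to the finitely many (nine) basis functions whose patches contain $T$, bound $|\varphi_K|_{k,T}$ by $h_T^{1-k}$ via \eqref{eq:normEstimate}, control $|\lambda_K(v)|$ through the boundedness of the weights $w_\mu^K$ and the Cauchy--Schwarz bound $\big(\fint_{S_\mu^K}v\big)^2\le |S_\mu^K|^{-1}\int_{S_\mu^K}v^2$ together with $|S_\mu^K|\gtrsim h_T^2$. The only cosmetic difference is that you use the triangle inequality on the sum while the paper squares and applies Cauchy--Schwarz; the estimates are identical.
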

\begin{proof}
We notice that $\Delta_{T}$ is at most a $5 \times 5$ patch. From the assumption of local quasi-uniformity in \eqref{eq:regularity}, we can conclude that all elements in $\Delta_{T}$ are of comparable size. Utilizing ~\eqref{eq:normEstimate}, we have $\max\limits_{ \substack{ \mathcal{M}_{K}\supset T \\ K\in \mathcal{J}_{h}} } |\varphi_{K} |_{k,T} \lesssim h_{T}^{1-k}$, where the hidden constant is only dependent on~$\gamma_{0}$.

\begin{align*}
|\widetilde{\Pi}_{h}v|_{k,T}^{2} & = \big|\sum_{ \substack{ \mathcal{M}_{K}\supset T \\ K\in \mathcal{J}_{h}} } \lambda_{K}(v) \varphi_{K}(x,y)\big|_{k,T}^{2} 
 \lesssim \sum_{ \substack{ \mathcal{M}_{K}\supset T \\ K\in \mathcal{J}_{h}} } |\lambda_{K}(v)|^{2} |\varphi_{K}(x,y)|_{k,T}^{2} \\ 
& \leq \max_{ \substack{ \mathcal{M}_{K}\supset T \\ K\in \mathcal{J}_{h}} } |\varphi_{K}(x,y)|_{k,T}^{2} \sum_{ \substack{ \mathcal{M}_{K}\supset T \\ K\in \mathcal{J}_{h}} } |\lambda_{K}(v)|^{2} 
\lesssim h_{T}^{2-2k} \sum_{ \substack{ \mathcal{M}_{K}\supset T \\ K\in \mathcal{J}_{h}} }\Big( \sum\limits_{\mu = 1}^{5}w_{\mu}^{K}\fint_{S_{\mu}^{K}}v{\,d x d y} \Big)^{2} \\
& \lesssim h_{T}^{2-2k} \sum_{ \substack{ \mathcal{M}_{K}\supset T \\ K\in \mathcal{J}_{h}} } \sum\limits_{\mu = 1}^{5}\big(\fint_{S_{\mu}^{K}}v{\,d x d y} \big)^{2}
 \lesssim h_{T}^{2-2k} \sum_{ \substack{ \mathcal{M}_{K}\supset T \\ K \in \mathcal{J}_{h}} } \sum\limits_{\mu = 1}^{5}\frac{1}{|S_{\mu}^{K}|}\int_{S_{\mu}^{K}}v^{2} {\,d x d y}  \\
& \lesssim h_{T}^{-2k} \|v\|_{0,\Delta_{T}}^{2}.
\end{align*}
The proof is thus completed. 
\end{proof}
\begin{lemma}\label{lem:approximationPi}
For any $T \in \mathcal{G}_{h}$, the following approximation property of  $\widetilde{\Pi}_{h}$ holds: 
\begin{equation}
|v-\widetilde{\Pi}_{h}v|_{k,T}\lesssim h_{T}^{s-k}|v|_{s,\Delta_{T}},\mbox{ with } 0\leq k \leq s \leq 3.
\end{equation}
\end{lemma}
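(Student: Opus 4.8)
The plan is to establish the estimate by the standard Bramble--Hilbert / averaged-Taylor strategy, exploiting the two properties of $\widetilde{\Pi}_h$ already in hand: it preserves $P_2(\widetilde{\Omega}_h)$ locally (Lemma~\ref{lem:preserveP2}(a)) and it is $L^2$-stable with the correct scaling on the enlarged patch $\Delta_T$ (Lemma~\ref{lem:stability}). Fix $T\in\mathcal{G}_h$ and let $0\le k\le s\le 3$. For an arbitrary $p\in P_{\lceil s\rceil-1}(\Delta_T)\subset P_2(\Delta_T)$ (note $s\le 3$, so polynomials of degree $\le 2$ suffice and are exactly reproduced) write
\begin{align*}
|v-\widetilde{\Pi}_h v|_{k,T} \le |v-p|_{k,T} + |\widetilde{\Pi}_h(p-v)|_{k,T},
\end{align*}
using $\widetilde{\Pi}_h p = p$ on $T$. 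The second term is bounded via Lemma~\ref{lem:stability} by $h_T^{-k}\|v-p\|_{0,\Delta_T}$, and the first term is trivially $\le|v-p|_{k,T}\le\|v-p\|_{k,\Delta_T}$. Hence
\begin{align*}
|v-\widetilde{\Pi}_h v|_{k,T} \lesssim \sum_{j=0}^{k} h_T^{j-k}\,|v-p|_{j,\Delta_T} + h_T^{-k}\,\|v-p\|_{0,\Delta_T}.
\end{align*}

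Next I would invoke the Bramble--Hilbert lemma on the patch $\Delta_T$: since $\Delta_T$ is at most a $5\times5$ patch of rectangles whose cells are all of comparable size $h_T$ (by the local quasi-uniformity \eqref{eq:regularity}), it is a connected domain that is the union of a bounded number of shape-regular rectangles, so it satisfies a uniform cone condition with constants depending only on $\gamma_0$. Therefore there exists $p\in P_2(\Delta_T)$ (an averaged Taylor polynomial, or the $L^2$-projection onto $P_{\lceil s\rceil -1}$) such that, for every $j$ with $0\le j\le \lceil s\rceil$,
\begin{align*}
|v-p|_{j,\Delta_T} \lesssim h_T^{\,s-j}\,|v|_{s,\Delta_T},
\end{align*}
with hidden constant depending only on $\gamma_0$; for non-integer interpolation one uses the fractional Bramble--Hilbert estimate, which holds on such Lipschitz patches. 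Substituting this $p$ into the previous display, every term becomes $\lesssim h_T^{s-k}|v|_{s,\Delta_T}$, and summing the finitely many terms gives the claim $|v-\widetilde{\Pi}_h v|_{k,T}\lesssim h_T^{s-k}|v|_{s,\Delta_T}$.

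The one point requiring genuine care — and the main obstacle — is the scaling argument behind both the Bramble--Hilbert estimate and the stability bound on the non-uniform, multi-cell patch $\Delta_T$: because the cells of $\mathcal{G}_h$ need not be uniform, one cannot simply map $\Delta_T$ to a single fixed reference patch. The remedy is to use \eqref{eq:regularity} to conclude that all cells meeting $T$ in $\Delta_T$ have diameters comparable to $h_T$ up to a factor depending only on $\gamma_0$, so that $\Delta_T$ is quasi-uniformly scaled; then a Deny--Lions/compactness argument applied to the finitely many combinatorial types of patch configurations (interior patch, boundary patch, corner patch, each with bounded aspect ratios) yields the estimate with a uniform constant. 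One should also note that the extended cells used in the expansion near $\partial\Omega$ were chosen so that \eqref{eq:regularity} persists on $\widetilde{\Omega}_h$, so $\Delta_T$ is shape-regular even when $T$ is a boundary cell. Once this uniformity is secured, the rest is the routine triangle-inequality manipulation sketched above.
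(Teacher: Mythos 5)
Your proposal is correct and follows essentially the same route as the paper: reproduce a $P_2$ polynomial via Lemma~\ref{lem:preserveP2}(a), split by the triangle inequality, bound the interpolated part with the stability estimate of Lemma~\ref{lem:stability}, and close with a Bramble--Hilbert estimate on the patch $\Delta_T$ (the paper cites the Dupont--Scott/Scott--Zhang form for a finite union of star-shaped rectangles, with constant depending only on $\gamma_0$). Your additional remarks on the uniformity of the constant over the finitely many patch configurations merely elaborate what the paper asserts by citation, so no gap remains.
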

\begin{proof}
For any polynomial $p \in P_{2}(\widetilde{\Omega}_{h})$, we have by Lemmas \ref{lem:preserveP2} and \ref{lem:stability} that 
\begin{align*}
|v-\widetilde{\Pi}_{h}v|_{k,T} & \leq |v-p|_{k,T} + |\widetilde{\Pi}_{h}(p-v)|_{k,T} \\
& \lesssim |v-p|_{k,T} + h_{T}^{-k}\|v-p\|_{0,\Delta_{T}} 
\end{align*}
Since $\Delta_{T}$ is a finite union of rectangles, each of which is star-shaped ensured by \eqref{eq:regularity}, we can apply the Bramble-Hilbert lemma in the form presented in \cite{Dupont;Scott1980,Scott;Zhang1990} and obtain 
\begin{align}\label{eq:bestP2}
\inf_{p\in P_{2}}|v-p|_{k,\Delta_{T}} \lesssim h_{T}^{s-k}|v|_{s,\Delta_{T}}, \mbox{ with }  0\leq k \leq s \leq 3,
\end{align}
where the hidden constant is only dependent on $\gamma_{0}$.
Therefore, we derive
\begin{align*}
|v-\widetilde{\Pi}_{h}v|_{k,T}\lesssim h_{T}^{s-k}|v|_{s,\Delta_{T}},\mbox{ with } 0\leq k \leq s \leq 3.
\end{align*}
The proof is completed. 
\end{proof}
\begin{lemma}{\rm(\!\cite[Theorem 1.4.5]{Brenner;Scott2007})}\label{lem:extension}
Suppose that $\Omega$ has a Lipschitz boundary. Then there is an extension mapping $E:~W^{p}_{k}(\Omega) \mapsto W^{p}_{k}(\mathbb{R}^{2})$ defined for all non-negative integers $k$ and real numbers $p$ in the range $1\leq p\leq \infty$ satisfying
 \begin{align}\label{eq:stabext}
 Ev\big|_{\Omega} = v, \quad 
 \|Ev\|_{W^{p}_{k}(\mathbb{R}^{2})} \leq C \|v\|_{W^{p}_{k}(\Omega)}, \quad \forall v \in W^{p}_{k}(\Omega),
 \end{align}
 where $C$ is a generic constant independent of $v$.
 \end{lemma}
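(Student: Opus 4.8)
The plan is to prove this classical Sobolev extension theorem by reducing the general Lipschitz domain to the model case of a region lying above a Lipschitz graph and building there a single \emph{universal} extension operator — one and the same $E$ for every $k$ and every $p$ — following Stein's method. First I would localize. Since $\partial\Omega$ is compact and Lipschitz, it can be covered by finitely many open sets $U_1,\dots,U_N$ such that, after a rigid rotation of coordinates in each $U_j$, the set $\Omega\cap U_j$ is exactly the part of $U_j$ lying strictly above the graph $x_2>\phi_j(x_1)$ of a Lipschitz function $\phi_j$. Adjoining an interior set $U_0\Subset\Omega$ so that $\{U_j\}_{j=0}^N$ covers $\overline\Omega$, I fix a smooth partition of unity $\{\chi_j\}$ subordinate to this cover. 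Writing $v=\sum_{j=0}^N\chi_j v$, the interior piece $\chi_0 v$ extends to $\mathbb R^2$ by zero, and each boundary piece is carried by the rotation to a function supported near a Lipschitz graph; it therefore suffices to construct a bounded extension for the model \emph{special Lipschitz domain} $\Omega^\ast=\{(x_1,x_2):x_2>\phi(x_1)\}$.

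Second, on $\Omega^\ast$ I would assemble Stein's operator from two ingredients. The first is a regularized distance $\delta^\ast$, defined and smooth on $\mathbb R^2\setminus\overline{\Omega^\ast}$, comparable to $\operatorname{dist}(\cdot,\partial\Omega^\ast)$, and obeying $|\partial^\alpha\delta^\ast|\lesssim(\delta^\ast)^{1-|\alpha|}$; such a function comes from a Whitney decomposition of the complement. The second is a kernel $\psi\in C[1,\infty)$ of rapid decay normalized by
\[
\int_1^\infty\psi(\lambda)\,d\lambda=1,\qquad\int_1^\infty\lambda^m\psi(\lambda)\,d\lambda=0\quad(m=1,2,\dots).
\]
The extension is $Ev=v$ on $\Omega^\ast$ and, for $x_2<\phi(x_1)$,
\[
Ev(x_1,x_2)=\int_1^\infty v\bigl(x_1,\,x_2+\lambda\,\delta^\ast(x_1,x_2)\bigr)\,\psi(\lambda)\,d\lambda,
\]
which reflects $v$ across the boundary by sampling it at heights that grow with the distance to $\partial\Omega^\ast$.

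Finally I would establish the bounds \eqref{eq:stabext}. Taking a derivative of order $k$ of $Ev$ produces, through a higher-order chain-rule expansion, terms where the derivatives land on $v$ (giving derivatives of $v$ of order $\le k$ evaluated inside $\Omega^\ast$) and terms where they land on $\delta^\ast$ (giving factors bounded using $|\partial^\alpha\delta^\ast|\lesssim(\delta^\ast)^{1-|\alpha|}$). The resulting negative powers of $\delta^\ast$ are precisely those killed by the vanishing moments $\int_1^\infty\lambda^m\psi\,d\lambda=0$, so after the change of variable that trades $\lambda\delta^\ast$ for the vertical height one bounds each piece by Minkowski's integral inequality and obtains $\|Ev\|_{W^p_k(\mathbb R^2)}\lesssim\|v\|_{W^p_k(\Omega^\ast)}$, uniformly in $k$ and in $1\le p\le\infty$. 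Patching back, $Ev:=\sum_j E_j(\chi_j v)$ — the interior term extended by zero and each $E_j$ the model operator conjugated by the $j$-th rotation and cut off to keep supports controlled — is linear, satisfies $Ev|_\Omega=v$ by construction, and inherits the estimate since the cover is finite. The main obstacle is this last step: showing that the \emph{same} moment kernel $\psi$ controls derivatives of \emph{every} order simultaneously, i.e.\ that for each multi-index $\alpha$ the singular factor $(\delta^\ast)^{-(|\alpha|-1)}$ generated by differentiating $\delta^\ast$ is annihilated by the matching vanishing moment of $\psi$. This cancellation is exactly what lets one operator serve all $W^p_k$ at once, and making it rigorous requires careful bookkeeping of the derivative expansion against the decay of $\psi$. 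For the polygonal $\Omega$ of the present paper the construction simplifies near each straight edge to a finite higher-order reflection, while the corners are still handled by the localization above.
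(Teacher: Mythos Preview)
Your sketch is a correct outline of Stein's universal extension theorem, but the paper does not prove this lemma at all: it is simply quoted verbatim from \cite[Theorem~1.4.5]{Brenner;Scott2007} as a known result, with no argument supplied. What you have written is essentially the proof that the cited reference (following Stein) gives, so there is nothing to compare --- you have filled in what the paper deliberately left to the literature.
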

\begin{theorem}\label{thm:approxOnDomain}
Let $E$ be an extension operator that satisfies \eqref{eq:stabext}. It holds for $v\in H^{s}(\Omega)$ that $|v-\widetilde{\Pi}_{h}E v|_{k,h}\lesssim h^{s-k}|v|_{s,\Omega}$ with $0\leq k \leq s \leq 3$.
\end{theorem}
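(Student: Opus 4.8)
The plan is to reduce the global statement to the cellwise approximation estimate already proved in Lemma~\ref{lem:approximationPi} and then sum over the mesh. Fix $T\in\mathcal{G}_h$. Since $T\subset\Omega$, the extension property in \eqref{eq:stabext} gives $(Ev)|_T=v|_T$, hence $|v-\widetilde{\Pi}_hEv|_{k,T}=|Ev-\widetilde{\Pi}_hEv|_{k,T}$, and applying Lemma~\ref{lem:approximationPi} to the function $Ev\in L^1(\widetilde{\Omega}_h)$ yields $|v-\widetilde{\Pi}_hEv|_{k,T}\lesssim h_T^{s-k}|Ev|_{s,\Delta_T}$ for $0\le k\le s\le 3$.

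Next I would square this, sum over all $T\in\mathcal{G}_h$, and use $h_T\le h$. The point to check here is that $\{\Delta_T\}_{T\in\mathcal{G}_h}$ has \emph{finite overlap}: by construction $\Delta_T=\cup_{\mathcal{M}_K\supset T}\mathcal{M}_K$ is contained in (at most) a $5\times5$ block of cells around $T$, and the local quasi-uniformity guaranteed by \eqref{eq:regularity} forces any fixed point of $\widetilde{\Omega}_h$ to lie in only a number of the sets $\Delta_T$ bounded by a constant depending on $\gamma_0$ alone. Consequently $\sum_{T\in\mathcal{G}_h}|Ev|_{s,\Delta_T}^2\lesssim|Ev|_{s,\widetilde{\Omega}_h}^2$, and therefore $|v-\widetilde{\Pi}_hEv|_{k,h}^2=\sum_{T\in\mathcal{G}_h}|v-\widetilde{\Pi}_hEv|_{k,T}^2\lesssim h^{2(s-k)}|Ev|_{s,\widetilde{\Omega}_h}^2$.

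It then remains to replace $|Ev|_{s,\widetilde{\Omega}_h}$ by the seminorm $|v|_{s,\Omega}$ on the fixed domain, and this is where the reproduction property of $\widetilde{\Pi}_h$ enters. Using the Deny--Lions (Bramble--Hilbert) lemma on the \emph{fixed} domain $\Omega$, choose a low-degree polynomial $q\in P_2$ (degree low enough that $|q|_{s,\cdot}\equiv0$) with $\|v-q\|_{s,\Omega}\lesssim|v|_{s,\Omega}$; since $\widetilde{\Pi}_h$ reproduces $P_2$ cellwise by Lemma~\ref{lem:preserveP2}(a), one has on each $T$ that $v-\widetilde{\Pi}_hEv=(v-q)-\widetilde{\Pi}_h(Ev-q)$, so the whole argument above may be rerun with $Ev$ replaced by the extension $E(v-q)+q$ of $v$. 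Because $|E(v-q)+q|_{s,\widetilde{\Omega}_h}=|E(v-q)|_{s,\widetilde{\Omega}_h}\le\|E(v-q)\|_{s,\mathbb{R}^{2}}\lesssim\|v-q\|_{s,\Omega}\lesssim|v|_{s,\Omega}$ by Lemma~\ref{lem:extension}, the claimed bound $|v-\widetilde{\Pi}_hEv|_{k,h}\lesssim h^{s-k}|v|_{s,\Omega}$ follows. (Run verbatim with the given $E$ one only gets $h^{s-k}\|v\|_{s,\Omega}$; the polynomial subtraction, legitimate precisely because $\widetilde{\Pi}_h$ is $P_2$-invariant, is what upgrades the full norm to the seminorm.)

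I expect the two genuine points of care to be exactly these last two: verifying the finite-overlap bound for $\{\Delta_T\}$ from the shape-regularity hypothesis, and the norm-versus-seminorm bookkeeping in the extension step. Once Lemmas~\ref{lem:approximationPi} and \ref{lem:extension} and the $P_2$-reproduction of Lemma~\ref{lem:preserveP2} are in hand, the cellwise estimate and the summation are routine.
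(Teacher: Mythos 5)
Your overall route is the same as the paper's: apply the cellwise estimate of Lemma~\ref{lem:approximationPi} to $Ev$, sum over $T\in\mathcal{G}_h$ using the finite overlap of $\{\Delta_T\}$, and invoke Lemma~\ref{lem:extension}. The finite-overlap point is correct and worth making explicit (the paper only says ``going through all elements''), and you have put your finger on a real issue that the paper's one-line proof glosses over: \eqref{eq:stabext} controls $\|Ev\|_{s,\mathbb{R}^2}$ by the \emph{full} norm $\|v\|_{s,\Omega}$, so the verbatim argument only yields $h^{s-k}\|v\|_{s,\Omega}$, not the seminorm bound claimed in the statement.

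The trouble is with your repair. The identity $v-\widetilde{\Pi}_hEv=(v-q)-\widetilde{\Pi}_h(Ev-q)$ is fine, but it does not let you ``rerun the argument with $Ev$ replaced by $E(v-q)+q$'': that substitution replaces the interpolant $\widetilde{\Pi}_hEv$ by $\widetilde{\Pi}_h\bigl(E(v-q)+q\bigr)$, and these two functions differ by $\sum_K\lambda_K(Eq-q)\varphi_K$, the sum running over the extended patches $K\in\mathcal{B}_h$. Those functionals average their argument over cells $S_\mu^K$ that protrude outside $\Omega$, where $Ev-q$ and $E(v-q)$ genuinely differ by $Eq-q$, a quantity not controlled by $|v|_{s,\Omega}$ (take $v$ a large constant). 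If instead you apply Lemma~\ref{lem:approximationPi} honestly to $w=Ev-q$, you get $|v-\widetilde{\Pi}_hEv|_{k,T}\lesssim h_T^{s-k}|Ev-q|_{s,\Delta_T}=h_T^{s-k}|Ev|_{s,\Delta_T}$ because the $s$-seminorm does not see $q$, and you are back to $\|v\|_{s,\Omega}$. So your argument, like the paper's, actually proves the estimate with $\|v\|_{s,\Omega}$ on the right (or, equivalently, the seminorm bound for the modified interpolant $q+\widetilde{\Pi}_hE(v-q)$). That weaker form is all that is needed downstream: in the proof of Theorem~\ref{thm:approxH02} the function lies in $H^2_0(\Omega)$ or $H^1_0(\Omega)$, where $\|v\|_{s,\Omega}\lesssim|v|_{s,\Omega}$ by Poincar\'e-type inequalities. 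But as a proof of the statement exactly as written, for arbitrary $v\in H^s(\Omega)$ and an $E$ satisfying only \eqref{eq:stabext}, the last step does not close.
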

\begin{proof}
Going through all elements in $\mathcal{G}_{h}$, by Lemmas~\ref{lem:approximationPi} and \ref{lem:extension}, we deduce the best approximation on $\Omega$, i.e., $|v-~\widetilde{\Pi}_{h}Ev|_{k,h} = |Ev-~\widetilde{\Pi}_{h}Ev|_{k,h} \lesssim h^{s-k}|Ev|_{s,\widetilde{\Omega}_{h}} \lesssim h^{s-k}|v|_{s,\Omega}$ with $0\leq k \leq s \leq 3.$
\end{proof}
These two lemmas are elementary but useful for verifying the approximation property of $\Pi_{h0}$; they can be found, for example, in \cite[Lemma~2]{Clement1975} and \cite[p24--p26]{Fichera2006}, respectively.
\begin{lemma}\label{lem:NormEquivalence} 
Let $e$ be an edge and $p\in P_{l}(e)$ with $l\geqslant 0$. Then 
$
|p|_{0,\infty, e}^{2} \lesssim |e|^{-1}|p|_{0,e}^{2}.
$
\end{lemma}
\begin{lemma}\label{lem:traceThm}
Let $K\in \mathcal{G}_{h}$, $e$ be an edge of $K$, and $v\in H^{1}(K)$. Then
$
|v|_{0,e}^{2}\lesssim h_{K}^{-1}|v|_{0,K}^{2} + h_{K}|v|_{1,K}^{2}.
$
\end{lemma}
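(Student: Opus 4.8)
The plan is to reduce to smooth functions by density and then establish the estimate by integrating the one-dimensional fundamental theorem of calculus in the direction transversal to $e$. First I would note that, since $C^{\infty}(\overline{K})$ is dense in $H^{1}(K)$, it suffices to prove the inequality for $v\in C^{\infty}(\overline{K})$ with the implied constant independent of $v$; the general case $v\in H^{1}(K)$ then follows by approximating $v$ in the $H^{1}(K)$-norm and passing to the limit, which in particular shows the trace of $v$ on $e$ is well defined in $L^{2}(e)$.

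By an affine change of coordinates I may assume $K=(0,L_{K})\times(0,H_{K})$ and $e=(0,L_{K})\times\{0\}$; the other three edges are handled identically by swapping the two coordinate directions and/or reflecting. For each fixed $x\in(0,L_{K})$ and each $y\in(0,H_{K})$,
\[
v(x,0)^{2} = v(x,y)^{2} - 2\int_{0}^{y} v(x,s)\,\partial_{s} v(x,s)\,ds .
\]
I would then average this identity over $y\in(0,H_{K})$ and integrate over $x\in(0,L_{K})$. Using $\int_{0}^{H_{K}}\!\!\int_{0}^{y}|v\,\partial_{s}v|\,ds\,dy \le H_{K}\int_{0}^{H_{K}}|v\,\partial_{s}v|\,ds$ for the iterated integral, this gives
\[
|v|_{0,e}^{2} \le \frac{1}{H_{K}}\,|v|_{0,K}^{2} + 2\int_{K}|v|\,|\partial_{y}v|\,dx\,dy .
\]
Bounding the last term with Young's inequality $2ab\le H_{K}^{-1}a^{2}+H_{K}b^{2}$ together with $|\partial_{y}v|\le|\nabla v|$ yields $|v|_{0,e}^{2}\le 2H_{K}^{-1}|v|_{0,K}^{2}+H_{K}\,|v|_{1,K}^{2}$.

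Finally I would invoke the shape-regularity assumption \eqref{eq:regularity}, which forces $L_{K}\cequiv H_{K}\cequiv h_{K}$, so that the coefficients $H_{K}^{-1}$ and $H_{K}$ above are $\cequiv h_{K}^{-1}$ and $\cequiv h_{K}$ respectively, giving exactly the claimed bound. I do not anticipate a genuine obstacle: the only mildly delicate points are the density argument that legitimizes working with smooth $v$ and the use of \eqref{eq:regularity} to collapse the two side lengths of $K$ into the single mesh scale $h_{K}$, both of which are routine. An equivalent route would be to scale $K$ to the unit square, apply the standard trace theorem there, and scale back while tracking the powers of $h_{K}$; I would use whichever version is shorter in the final write-up.
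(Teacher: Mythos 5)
Your proof is correct. The paper does not actually prove this lemma; it only cites it as an elementary scaled trace inequality (referring to Fichera's book), and your argument --- the one-dimensional fundamental theorem of calculus in the direction transversal to $e$, averaged over the cell, followed by Young's inequality and the shape-regularity bound $L_{K}\cequiv H_{K}\cequiv h_{K}$ --- is precisely the standard derivation of that cited result, with all constants tracked correctly.
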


Now we are going to prove Theorem \ref{thm:approxH02}.

\paragraph{\bf Proof of Theorem \ref{thm:approxH02}}
Let $E$ be an extension operator that satisfies \eqref{eq:bestP2}. Since $v-\Pi_{h0}v = (v - \widetilde{\Pi}_{h}Ev)+ (\widetilde{\Pi}_{h}Ev - \Pi_{h0}v)$, we only have to analyze $\widetilde{\Pi}_{h}Ev - \Pi_{h0}v$ cell by cell.
If $T\in \mathcal{G}_{h}$, and $\#\big\{\mathcal{M}_{K}: \ \mathcal{M}_{K}\cap \mathring{T} \ne \varnothing, K\in \mathcal{K}_{h}^{i}\big\} = 9$, then $(\widetilde{\Pi}_{h}Ev - \Pi_{h0}v)|_{T} = 0$, otherwise we have 
$$
(\widetilde{\Pi}_{h}Ev - \Pi_{h0}v)|_{T} = \sum\limits_{\substack{\mathcal{M}_{K}\supset T \\ K \in \mathcal{J}_{h} \backslash \mathcal{K}_{h}^{i}}} \lambda_{K}(Ev) \varphi_{K}|_{T}.
$$
First, we consider the case that $v\in H^{2}_{0}(\Omega)\cap H^{3}(\Omega)$. We insert some function $p\in P_{2}(\widetilde{\Omega}_{h})$ in the right-hand-side of the above equation. By \eqref{eq:normEstimate} and the proof procedure in Lemma \ref{lem:stability}, we obtain
\begin{align*}
\big|\sum\limits_{\substack{\mathcal{M}_{K}\supset T \\ K \in \mathcal{J}_{h} \backslash \mathcal{K}_{h}^{i}}} \lambda_{K}(Ev) \varphi_{K} \big|_{k,T}^{2} & = \big|\sum\limits_{\substack{\mathcal{M}_{K}\supset T \\ K \in \mathcal{J}_{h} \backslash \mathcal{K}_{h}^{i}}} \lambda_{K}(Ev-p) \varphi_{K} + \sum\limits_{\substack{\mathcal{M}_{K}\supset T \\ K \in \mathcal{J}_{h} \backslash \mathcal{K}_{h}^{i}}}\lambda_{K}(p) \varphi_{K}\ \big|_{k,T}^{2} \\
& \lesssim \big|\sum\limits_{\substack{\mathcal{M}_{K}\supset T \\ K \in \mathcal{J}_{h} \backslash \mathcal{K}_{h}^{i}}} \lambda_{K}(Ev-p) \varphi_{K} \  \big|_{k,T}^{2} + \big|\sum\limits_{\substack{\mathcal{M}_{K}\supset T \\ K \in \mathcal{J}_{h} \backslash \mathcal{K}_{h}^{i}}}\lambda_{K}(p) \varphi_{K}\  \big|_{k,T}^{2} \\
& \lesssim h_{T}^{-2k}\|Ev-p\|_{0,\Delta_{T}}^{2} + h_{T}^{2-2k}\sum\limits_{\substack{\mathcal{M}_{K}\supset T \\ K \in \mathcal{J}_{h} \backslash \mathcal{K}_{h}^{i}}} |\lambda_{K}(p) |^{2}.
\end{align*}

From Lemma~\ref{lem:property of phi} (b) and the construction of the functional $\lambda_{K}$, it holds that 
$$
\lambda_{K}(p) = p(c_{K}) - \frac{1}{8}\frac{\partial^{2}p}{\partial x^{2}}L_{K}^{2} - \frac{1}{8}\frac{\partial^{2}p}{\partial y^{2}}H_{K}^{2}.
$$

Thus, by the Taylor's expansion, there exists some $a_{K}\in \mathcal{N}_{h}^{b}$, $e_{K}\in \mathcal{E}_{h}^{b}$, and a boundary element $Q_{K}\in \mathcal{K}_{h}^{b}$, satisfying $a_{K}\in e_{K} \subset Q_{K}$, such that
\begin{align}\label{eq:lambdaP}
\lambda_{K}(p) = p(a_{K}) + (-1)^{\delta_{1}} \frac{\partial p }{\partial x}(a_{K})\frac{L_{K}}{2}+(-1)^{\delta_{2}}\frac{\partial p }{\partial y}(a_{K})\frac{H_{K}}{2} + (-1)^{\delta_{1}+\delta_{2}}\frac{\partial^{2} p }{\partial x \partial y}\frac{L_{K}H_{K}}{4},
\end{align}
where  $\delta_{1}$ and $\delta_{2}$ equals to $\pm 1$, and their specific values are determined by the relative position of $c_{K}$ and $a_{K}$. Since $v \in H^{2}_{0}(\Omega)$, it can be deduced that 
\begin{align}\label{eq:uH02}
|v|_{0,e_{K}} =\big|\tfrac{\partial v}{\partial x}\big|_{0,e_{K}} = \big|\tfrac{\partial v}{\partial y}\big|_{0,e_{K}} = \big|\tfrac{\partial^{2}v}{\partial x \partial y}\big|_{0,e_{K}} = 0.
\end{align}
From Lemma \ref{lem:traceThm} and \eqref{eq:uH02},  we have 
\begin{equation}\label{eq:traceP}
\begin{split}
& |p|_{0,e_{K}}^{2} = |v-p|_{0,e_{K}}^{2} \lesssim h_{Q_{K}}^{-1} |v-p|_{0,Q_{K}}^{2} + h_{Q_{K}} |v-p|_{1,Q_{K}}^{2}; \\
& \big|\tfrac{\partial p}{\partial x}\big|_{0,e_{K}}^{2} = \big|\tfrac{\partial v}{\partial x} -\tfrac{\partial p}{\partial x}\big|_{0,e_{K}}^{2} \lesssim h_{Q_{K}}^{-1} |v-p|_{1,Q_{K}}^{2} + h_{Q_{K}}|v-p|_{2,Q_{K}}^{2}; \\
& \big|\tfrac{\partial p}{\partial y}\big|_{0,e_{K}}^{2} = \big|\tfrac{\partial v}{\partial y} -\tfrac{\partial p}{\partial y}\big|_{0,e_{K}}^{2} \lesssim h_{Q_{K}}^{-1} |v-p|_{1,Q_{K}}^{2} + h_{Q_{K}} |v-p|_{2,Q_{K}}^{2}; \\
& \big|\tfrac{\partial^{2} p}{\partial x \partial y}\big|_{0,e_{K}}^{2} = \big|\tfrac{\partial^{2} v}{\partial x \partial y} -\tfrac{\partial^{2} p}{\partial x \partial y}\big|_{0,e_{K}}^{2} \lesssim h_{Q_{K}}^{-1} |v-p|_{2,Q_{K}}^{2} + h_{Q_{K}}  |v-p|_{3,Q_{K}}^{2}.
\end{split}
\end{equation}
A combination of Lemma \ref{lem:NormEquivalence},  \eqref{eq:bestP2}, and \eqref{eq:traceP} leads to 
\begin{align}
h_{T}^{-2k}\|Ev-p\|_{0,\Delta_{T}}^{2} + h_{T}^{2-2k}\sum\limits_{\substack{\mathcal{M}_{K}\supset T \\ K \in \mathcal{J}_{h} \backslash \mathcal{K}_{h}^{i}}} |\lambda_{K}(p)|^{2} \lesssim h_{T}^{2(3-k)} |Ev|_{3,\Delta_{T}}, \mbox{ with } 0\leqslant k \leqslant 3.
\end{align}
For the case of a lower regularity that $v \in H^{2}_{0}(\Omega)$, we assume $p\in P_{1}(\widetilde{\Omega}_{h})$, and then $\lambda_{K}(p) = p(c_{K}) = p(a_{K}) + (-1)^{\delta_{1}} \frac{\partial p }{\partial x}(a_{K})\frac{L_{K}}{2}+(-1)^{\delta_{2}}\frac{\partial p }{\partial y}(a_{K})\frac{H_{K}}{2}$. For the case that $v \in H^{1}_{0}(\Omega)$, we utilize some $p\in P_{0}(\widetilde{\Omega}_{h})$, and then $\lambda_{K}(p) = p(c_{K}) = p(a_{K})$. By repeating the above process, similar results can be obtained for those two cases. Finally we have
$|v-\Pi_{h0}v|_{k,h}\lesssim h^{s-k}|v|_{s,\Omega}, \mbox{ with } 0\leq k \leq s \leq 3.$ The proof is completed.
\qed
\begin{remark}
{\rm
We note that the operator $\Pi_{h0}$ is not a projection. Actually, with the given basis functions, no locally-defined interpolation can be projective. We refer to the appendix for detailed discussions. 
}
\end{remark}
\section{A robust optimal scheme for the model problem}
\label{sec:convergence}
Associated with the the RRM element space $V_{h0}^{\rm{R}}$, a finite element scheme for \eqref{eq:model problem} is defined as:   Find $u_{h}^{\rm{R}}\in V_{h0}^{\rm{R}}$, such that
\begin{align}\label{eq:discrete form RRM}
\varepsilon^{2}a_{h}(u_{h}^{\rm{R}},v_{h}) + b_{h}(u_{h}^{\rm{R}},v_{h}) = (f,v_{h}), \quad \forall v_{h} \in V_{h0}^{\rm{R}}.
\end{align}
Define $ \interleave  w \interleave_{\varepsilon,h} := \sqrt{\varepsilon^{2} a_{h}(w,w)+b_{h}(w,w)}$, then $\interleave  \cdot \interleave_{\varepsilon,h}$ is a norm on $V+V_{h0}^{\rm{R}}$. The well-posedness of \eqref{eq:discrete form RRM} follows by the Lax-Milgram lemma. 
\begin{theorem}\label{thm:errorRRM}
Let $\Omega \subset \mathbb{R}^{2}$ be a bounded domain and $\mathcal{G}_{h}$ be a regular rectangular subdivision which covers $\Omega$. Let $u$ and $u_{h}^{\rm R}$ be the solutions of \eqref{eq:model problem} and \eqref{eq:discrete form RRM}, respectively. There exists a constant $C$, uniform with respect to $\varepsilon$ and $h$, such that it holds when $u \in H^{2}_{0}(\Omega) \cap H^{3}(\Omega)$ that 
\begin{equation}
\interleave u-u_{h}^{\rm{R}} \interleave_{\varepsilon,h} \leqslant C\left[ h |u|_{2,\Omega} + \varepsilon h |u|_{3,\Omega}  + \varepsilon^{2} h \|\Delta^{2}u\|_{0,\Omega}\right].
\end{equation}
If further $\mathcal{G}_{h}$ is uniform, then 
\begin{equation}\interleave u-u_{h}^{\rm{R}} \interleave_{\varepsilon,h} \leqslant C\left[ h^{2}|u|_{3,\Omega} + \varepsilon h |u|_{3,\Omega}  +   \varepsilon^{2} h \|\Delta^{2}u\|_{0,\Omega}\right].
\end{equation}
\end{theorem}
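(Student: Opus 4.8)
The plan is to use the standard second Strang lemma framework for nonconforming methods, splitting the error into an approximation (best-fit) part and a consistency part, and then invoking the results already assembled in the excerpt. First I would establish coercivity of the bilinear form $\varepsilon^2 a_h(\cdot,\cdot) + b_h(\cdot,\cdot)$ on $V_{h0}^{\rm R}$ in the norm $\interleave\cdot\interleave_{\varepsilon,h}$ — this is immediate since the form \emph{is} the square of that norm — so Lax–Milgram gives well-posedness and the Strang estimate
\begin{align*}
\interleave u - u_h^{\rm R}\interleave_{\varepsilon,h}
&\lesssim \inf_{v_h\in V_{h0}^{\rm R}} \interleave u - v_h\interleave_{\varepsilon,h}
+ \sup_{0\ne w_h\in V_{h0}^{\rm R}}
\frac{\varepsilon^2 a_h(u,w_h) + b_h(u,w_h) - (f,w_h)}{\interleave w_h\interleave_{\varepsilon,h}}.
\end{align*}
For the approximation term I would take $v_h = \Pi_{h0}u$ and apply Theorem~\ref{thm:approxH02}: since $\interleave u - \Pi_{h0}u\interleave_{\varepsilon,h}^2 = \varepsilon^2|u-\Pi_{h0}u|_{2,h}^2 + |u-\Pi_{h0}u|_{1,h}^2$, the bounds $|u-\Pi_{h0}u|_{2,h}\lesssim h|u|_{3,\Omega}$ (for $s=3$) and $|u-\Pi_{h0}u|_{1,h}\lesssim h|u|_{2,\Omega}$ give the contributions $\varepsilon h|u|_{3,\Omega}$ and $h|u|_{2,\Omega}$; on a uniform grid one may instead use $|u-\Pi_{h0}u|_{1,h}\lesssim h^2|u|_{3,\Omega}$ to get the sharpened first term.

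For the consistency term I would insert the strong form: since $\varepsilon^2\Delta^2 u - \Delta u = f$, the residual numerator equals $\varepsilon^2\big(a_h(u,w_h) - (\Delta^2 u, w_h)\big) + \big(b_h(u,w_h) + (\Delta u, w_h)\big)$. The first bracket is controlled by the cited Lemma on $V_{h0}^{\rm M}$ (note $V_{h0}^{\rm R}\subset V_{h0}^{\rm M}$), yielding $\lesssim \varepsilon^2 h(|u|_{3,\Omega} + h\|\Delta^2 u\|_{0,\Omega})|w_h|_{2,h}$; the second bracket is controlled by Lemma~\ref{lem:consisRM}(a), yielding $\lesssim h|u|_{2,\Omega}|w_h|_{1,h}$ (or, on a uniform grid, Lemma~\ref{lem:consisRM}(b) with $k=3$ gives $\lesssim h^2|u|_{3,\Omega}|w_h|_{1,h}$). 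Dividing by $\interleave w_h\interleave_{\varepsilon,h}$ and using $\varepsilon|w_h|_{2,h}\le\interleave w_h\interleave_{\varepsilon,h}$ and $|w_h|_{1,h}\le\interleave w_h\interleave_{\varepsilon,h}$ to absorb the test-function factors, the consistency bound becomes $\lesssim \varepsilon h|u|_{3,\Omega} + \varepsilon^2 h\|\Delta^2 u\|_{0,\Omega} + h|u|_{2,\Omega}$ in the general case, and $\lesssim \varepsilon h|u|_{3,\Omega} + \varepsilon^2 h\|\Delta^2 u\|_{0,\Omega} + h^2|u|_{3,\Omega}$ on uniform grids. Combining with the approximation estimate gives exactly the two claimed inequalities.

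The main obstacle — or rather the only place requiring care — is bookkeeping the $\varepsilon$-weights so that the estimate is genuinely \emph{robust}: one must make sure that every occurrence of $|w_h|_{2,h}$ is paired with an $\varepsilon$ (or $\varepsilon^2$) factor before being absorbed into $\interleave w_h\interleave_{\varepsilon,h}$, since a bare $|w_h|_{2,h}$ cannot be bounded by $\interleave w_h\interleave_{\varepsilon,h}$ uniformly as $\varepsilon\to 0$. The consistency lemmas are stated precisely so that this pairing works out: the $H^2$-type consistency error carries $\varepsilon^2$ and is tested against $|w_h|_{2,h}$, while the $H^1$-type consistency error carries no $\varepsilon$ and is tested against $|w_h|_{1,h}$. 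A secondary technical point is that Theorem~\ref{thm:approxH02} requires $u\in H^2_0(\Omega)\cap H^3(\Omega)$ for the $s=3$ rates, which is exactly the hypothesis of the theorem, so no additional regularity argument is needed here; the regularity Lemma~\ref{lem:regularity} is only relevant if one wants to further bound the right-hand side in terms of $\|f\|_{0,\Omega}$, which this statement does not ask for.
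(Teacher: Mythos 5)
Your overall strategy (Strang's second lemma, approximation via $\Pi_{h0}u$ and Theorem~\ref{thm:approxH02}, second-order consistency via Lemma~\ref{lem:consisRM}) coincides with the paper's, and those parts are fine. The genuine gap is in the fourth-order consistency term. You bound $\varepsilon^{2}\bigl(a_{h}(u,w_{h})-(\Delta^{2}u,w_{h})\bigr)$ by the cited lemma for $V_{h0}^{\rm M}$, obtaining $\varepsilon^{2}h\bigl(|u|_{3,\Omega}+h\|\Delta^{2}u\|_{0,\Omega}\bigr)|w_{h}|_{2,h}$, and then absorb $|w_{h}|_{2,h}$ at the cost of one factor of $\varepsilon$. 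That turns the $\|\Delta^{2}u\|$-part into $\varepsilon h^{2}\|\Delta^{2}u\|_{0,\Omega}\interleave w_{h}\interleave_{\varepsilon,h}$, not the $\varepsilon^{2}h\|\Delta^{2}u\|_{0,\Omega}\interleave w_{h}\interleave_{\varepsilon,h}$ you write down; the two are incomparable, and precisely in the singularly perturbed regime $h>\varepsilon$ your bound is strictly weaker than the one claimed in the theorem (since $\varepsilon h^{2}\geq\varepsilon^{2}h$ exactly when $h\geq\varepsilon$). Because $\|\Delta^{2}u\|_{0,\Omega}$ can only be controlled through $\varepsilon^{2}\Delta^{2}u=f+\Delta u$, which costs a factor $\varepsilon^{-2}$, there is no way to massage $\varepsilon h^{2}\|\Delta^{2}u\|_{0,\Omega}$ into the stated right-hand side. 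So the step ``the consistency bound becomes $\ldots+\varepsilon^{2}h\|\Delta^{2}u\|_{0,\Omega}$'' does not follow from your chain of inequalities — ironically, it is exactly the $\varepsilon$-bookkeeping you flag as the delicate point.

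The paper circumvents this by inserting the bilinear nodal interpolant $\Pi_{h0}^{\rm b}w_{h}\in H^{1}_{0}(\Omega)$: it splits $\varepsilon^{2}a_{h}(u,w_{h})-\varepsilon^{2}(\Delta^{2}u,w_{h})$ into $R_{1}=\varepsilon^{2}\bigl(a_{h}(u,w_{h})-(\Delta^{2}u,\Pi_{h0}^{\rm b}w_{h})\bigr)$ and $R_{3}=\varepsilon^{2}(\Delta^{2}u,\Pi_{h0}^{\rm b}w_{h}-w_{h})$. In $R_{1}$, integration by parts eliminates $\Delta^{2}u$ entirely and leaves only $|u|_{3,\Omega}$ paired with $\varepsilon|w_{h}|_{2,h}$, giving $\varepsilon h|u|_{3,\Omega}\interleave w_{h}\interleave_{\varepsilon,h}$; in $R_{3}$, the estimate $\|\Pi_{h0}^{\rm b}w_{h}-w_{h}\|_{0,\Omega}\lesssim h|w_{h}|_{1,h}$ pairs $\|\Delta^{2}u\|_{0,\Omega}$ with $|w_{h}|_{1,h}\leq\interleave w_{h}\interleave_{\varepsilon,h}$ at no $\varepsilon$ cost, which is what produces the $\varepsilon^{2}h\|\Delta^{2}u\|_{0,\Omega}$ term. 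You would need to adopt this (or an equivalent) decomposition to prove the theorem as stated.
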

\begin{proof}
By the Strang's second lemma \cite{P.G.Ciarlet2002}, we have
\begin{align}\label{eq:StrangRRM}
\interleave u-u_{h}^{\rm R} \interleave_{\varepsilon,h} \lesssim \inf_{v_{h}\in V_{h0}^{\rm{R}}} \interleave u - v_{h} \interleave_{\varepsilon,h} + \sup_{w_{h}\in V_{h0}^{\rm{R}},w_{h} \ne 0}  \frac{E_{\varepsilon,h}(u,w_{h})}{\interleave w_{h}\interleave_{\varepsilon,h} },
\end{align}
where $E_{\varepsilon,h}(u,w_{h}) = \varepsilon^{2} a_{h}(u,w_{h}) + b_{h}(u,w_{h}) -(f,w_{h})$.

For the approximation error, by Theorem \ref{thm:approxH02}, it holds  for $w\in  H^{2}_{0}(\Omega)\cap H^{3}(\Omega)$ that
\begin{equation}\label{lem:approx in energy norm}
\inf_{v_{h}\in V_{h0}^{\rm{R}}} \interleave w - v_{h} \interleave_{\varepsilon,h} \leq \interleave w - \Pi_{h0}w \interleave_{\varepsilon,h} \lesssim 
\left\{
\begin{array}{l}
h(|w|_{2,\Omega} +\varepsilon |w|_{3,\Omega}), \\
h(h+\varepsilon)|w|_{3,\Omega}.
\end{array}
\right.
\end{equation}

For the consistency error, let $\Pi_{h0}^{\rm{b}}$ be the nodal interpolation operator associated with the bilinear element, then
\begin{equation}
\begin{split}
E_{\varepsilon,h}(u,w_{h}) & =  \varepsilon^{2}a_{h}(u,w_{h}) + b_{h}(u,w_{h}) - (f,w_{h}) \\
 & = \varepsilon^{2} \sum_{K\in \mathcal{G}_{h}} \int_{K} \nabla^{2}u : \nabla^{2}w_{h}  + \sum_{K\in \mathcal{G}_{h}} \int_{K} \nabla u \cdot \nabla w_{h}  - \int_{\Omega}(\varepsilon^{2} \Delta^{2} u - \Delta u)w_{h} \\
 & = \varepsilon^{2} \sum_{K\in \mathcal{G}_{h}} \int_{K} (\nabla^{2}u : \nabla^{2}w_{h} - \Delta^{2} u \ \Pi_{h0}^{\rm{b}} w_{h} ) + \sum_{K\in \mathcal{G}_{h}} \int_{K} (\nabla u \cdot \nabla w_{h} + \Delta u \ w_{h}) \\
&  \quad + \varepsilon^{2}\int_{\Omega} \Delta^{2} u\ (\Pi_{h0}^{\rm{b}} w_{h} - w_{h}) := R_{1} + R_{2} +R_{3}.
\end{split}
\end{equation}
We denote $\partial_{1}: = \partial_{x}$, $\partial_{2}: = \partial_{y}$, and utilize similar abbreviations for higher derivatives. Since $\Pi_{h0}^{\rm{b}} w_{h}\in H^{1}_{0}(\Omega)$, we have, by the Green's formula,
\begin{equation}
R_{1} = \varepsilon^{2} \sum_{K\in \mathcal{G}_{h}}  \sum_{i,j = 1:2} \int_{K} \partial_{iij}u \ \partial_{j}(\Pi_{h0}^{\rm{b}} w_{h} - w_{h}) + \varepsilon^{2}\sum_{K\in \mathcal{G}_{h}}\sum_{i,j = 1:2} \int_{\partial_{K}} \partial_{ij}u \ \partial_{j} w_{h} \ n_{i}\,d s.
\end{equation}
By the Cauchy-Schwarz inequality and the approximation property of the interpolation $\Pi_{h0}^{\rm{b}} $, 
\begin{equation}\label{eq:R1_1}
\Big| \varepsilon^{2} \sum_{K\in \mathcal{G}_{h}}  \sum_{i,j = 1:2} \int_{K} \partial_{iij}u \ \partial_{j}(\Pi_{h0}^{\rm{b}} w_{h} - w_{h}) \Big| \lesssim
\left\{
\begin{array}{l}
 \varepsilon h |u|_{3,\Omega} \interleave w_{h} \interleave_{\varepsilon,h}, \\
 \varepsilon^{\frac{3}{2}} h^{\frac{1}{2}} |u|_{3,\Omega} \interleave w_{h} \interleave_{\varepsilon,h}.
\end{array}
\right.
\end{equation}
 Note that $ \fint_{F} \llbracket\partial_{(i,j)} w_{h} \rrbracket_F \,d s = 0, i = 1:2, \forall F\in \mathcal{E}_{h}$. Let $R_{F}^{0}$ be the $L^{2}$ projection operator onto $P_{0}(F)$.
 \begin{equation}\label{eq:R1_2}
\begin{split}
\big| \varepsilon^{2}\sum_{K\in \mathcal{G}_{h}}\sum_{i,j = 1:2} \int_{\partial_{K}} \partial_{ij}u \ \partial_{j} w_{h} \ n_{i}\,d s \big| & = \big| \varepsilon^{2}\sum_{K\in \mathcal{G}_{h}}\sum_{i,j = 1:2} \sum_{F\in \partial_{K}}\int_{F} \partial_{ij}u \ \partial_{j} w_{h}\ (n_{F})_{i}\,d s \big| \\
& = \big| \varepsilon^{2}\sum_{K\in \mathcal{G}_{h}}\sum_{i,j = 1:2} \sum_{F\in \partial_{K}}\int_{F} R_{F}^{0} (\partial_{ij}u) \ R_{F}^{0} (\partial_{j}  w_{h}) \ (n_{F})_{i}\,d s \big| \\
& \lesssim \varepsilon^{2}\sum_{K\in \mathcal{G}_{h}}\sum_{i,j = 1:2} \sum_{F\in \partial_{K}}  \| R_{F}^{0} (\partial_{ij}u) \|_{0,F}  \| R_{F}^{0} (\partial_{j}  w_{h})  \|_{0,F} \\
& \lesssim  
\left\{
\begin{array}{l}
\varepsilon h |u|_{3,\Omega} \interleave w_{h} \interleave_{\varepsilon,h}, \\
\varepsilon h^{\frac{1}{2}} |u|_{2,\Omega}^{\frac{1}{2}}|u|_{3,\Omega}^{\frac{1}{2}} \interleave w_{h} \interleave_{\varepsilon,h}.
\end{array}
\right.
\end{split}
\end{equation}
Summing up \eqref{eq:R1_1} and \eqref{eq:R1_2}, we obtain that 
\begin{equation}\label{eq:R1}
R_{1} \lesssim
\left\{
\begin{array}{l}
\varepsilon h |u|_{3,\Omega} \interleave w_{h} \interleave_{\varepsilon,h}, \\
\big(\varepsilon^{\frac{3}{2}} h^{\frac{1}{2}} |u|_{3,\Omega} + \varepsilon h^{\frac{1}{2}} |u|_{2,\Omega}^{\frac{1}{2}}|u|_{3,\Omega}^{\frac{1}{2}} \big)\interleave w_{h} \interleave_{\varepsilon,h} .
\end{array}
\right.
\end{equation}
From $V_{h0}^{\rm{R}} \subset V_{hs}^{\rm{M}}$ and $H^{2}_{0}(\Omega) \subset H^{1}_{0}(\Omega)$, we have $R_{2} \lesssim h|u|_{2,\Omega}\interleave w_{h}\interleave_{\varepsilon,h}$ by Lemma \ref{lem:consisRM}. Specially, if the mesh is uniform, then $R_{2} \lesssim h^{2}|u|_{3,\Omega}\interleave w_{h}\interleave_{\varepsilon,h}$.
For $R_{3}$, it holds that $R_{3}\lesssim \varepsilon^{2} h  \|\Delta^{2}u\|_{0,\Omega}\interleave w_{h}\interleave_{\varepsilon,h}$.
By \eqref{eq:StrangRRM}, \eqref{lem:approx in energy norm}, together with the estimates of terms $R_{1}$, $R_{2}$, and $R_{3}$, the results can be derived immediately.
\end{proof}

It appears in Theorem \ref{thm:errorRRM} that, the RRM element, which is a nonconforming quadratic finite element scheme, ensures linear convergence with respect to $h$, uniformly in $ \varepsilon$, as long as the term $|u|_{2,\Omega}  + \varepsilon |u|_{3,\Omega} + \varepsilon^{2} \|\Delta^{2}u\|_{0,\Omega}$ is uniformly bounded. When $\varepsilon$ approaches zero, the convergence rate of this scheme approaches $O(h^{2})$ in the energy norm on uniform grids, provided that the solution is sufficiently smooth.  As is mentioned in \cite{Nilssen;Tai;Winther2001}, it may happen that $|u|_{2,\Omega}$ and $|u|_{3,\Omega}$ blow up when $\varepsilon \rightarrow 0$. By the regularity result on a convex domain in Lemma \ref{lem:regularity}, we conclude with the following uniform convergence property for the RRM element method.
\begin{theorem}\label{thm:uniform convergence}
Let $\Omega$ be convex and $f\in L^{2}(\Omega)$. Then 
$\interleave u-u_{h}^{\rm{R}}\interleave_{\varepsilon,h}\lesssim h^{\frac{1}{2}}\|f\|_{0,\Omega}$.
\end{theorem}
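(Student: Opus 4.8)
The natural plan is to dichotomise on the size of $\varepsilon$ relative to $h$, treating the large-$\varepsilon$ case by the already-proved Theorem~\ref{thm:errorRRM} and the small-$\varepsilon$ case through the boundary-layer splitting $u = u^{0}+(u-u^{0})$. First I would record what Lemma~\ref{lem:regularity} yields on the convex domain $\Omega$: $|u|_{2,\Omega}\lesssim\varepsilon^{-1/2}\|f\|_{0,\Omega}$, hence also $|u|_{3,\Omega}\lesssim\varepsilon^{-3/2}\|f\|_{0,\Omega}$ and $\varepsilon^{2}\|\Delta^{2}u\|_{0,\Omega}=\|f+\Delta u\|_{0,\Omega}\lesssim\varepsilon^{-1/2}\|f\|_{0,\Omega}$, together with $|u-u^{0}|_{1,\Omega}\lesssim\varepsilon^{1/2}\|f\|_{0,\Omega}$ and the classical bound $\|u^{0}\|_{2,\Omega}\lesssim\|f\|_{0,\Omega}$.

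If $\varepsilon\ge h$ the result is immediate: the first estimate of Theorem~\ref{thm:errorRRM} and the bounds above give $\interleave u-u_{h}^{\rm R}\interleave_{\varepsilon,h}\lesssim h\big(|u|_{2,\Omega}+\varepsilon|u|_{3,\Omega}\big)+\varepsilon^{2}h\|\Delta^{2}u\|_{0,\Omega}\lesssim h\varepsilon^{-1/2}\|f\|_{0,\Omega}=(h/\varepsilon)^{1/2}\,h^{1/2}\|f\|_{0,\Omega}\le h^{1/2}\|f\|_{0,\Omega}$.

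For $\varepsilon<h$ I would return to Strang's inequality \eqref{eq:StrangRRM} and bound both terms by $h^{1/2}\|f\|_{0,\Omega}$. For the approximation part I take $v_{h}=\Pi_{h0}u$: by Theorem~\ref{thm:approxH02}, $\varepsilon|u-\Pi_{h0}u|_{2,h}\lesssim\varepsilon|u|_{2,\Omega}\lesssim\varepsilon^{1/2}\|f\|_{0,\Omega}\le h^{1/2}\|f\|_{0,\Omega}$; and writing $u-\Pi_{h0}u=\big((u-u^{0})-\Pi_{h0}(u-u^{0})\big)+\big(u^{0}-\Pi_{h0}u^{0}\big)$, the first bracket is $\lesssim|u-u^{0}|_{1,\Omega}\lesssim\varepsilon^{1/2}\|f\|_{0,\Omega}$ by the $H^{1}_{0}$ case of Theorem~\ref{thm:approxH02}, while the second requires the refinement $|u^{0}-\Pi_{h0}u^{0}|_{1,h}\lesssim h^{1/2}\|u^{0}\|_{2,\Omega}$ valid for $u^{0}\in H^{1}_{0}(\Omega)\cap H^{2}(\Omega)$ — proved by combining Theorem~\ref{thm:approxOnDomain} on interior cells (order $h$) with the boundary-strip estimate $\|\nabla u^{0}\|_{0,\omega_{h}}^{2}\lesssim h\|u^{0}\|_{2,\Omega}^{2}$ on the $\mathcal{O}(h)$-wide ring $\omega_{h}$ of boundary cells, where $\Pi_{h0}$ unavoidably gives up $P_{2}$-reproduction because $V^{\rm R}_{h0}$ carries the (here spurious) clamped condition. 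For the consistency part I reuse $E_{\varepsilon,h}(u,w_{h})=R_{1}+R_{2}+R_{3}$ from the proof of Theorem~\ref{thm:errorRRM}. Term $R_{1}$ is controlled by its ``second'' bound there together with the regularity estimates, $R_{1}\lesssim\big(\varepsilon^{3/2}h^{1/2}|u|_{3,\Omega}+\varepsilon h^{1/2}|u|_{2,\Omega}^{1/2}|u|_{3,\Omega}^{1/2}\big)\interleave w_{h}\interleave_{\varepsilon,h}\lesssim h^{1/2}\|f\|_{0,\Omega}\interleave w_{h}\interleave_{\varepsilon,h}$. In $R_{2}$ and $R_{3}$ I split $u=u^{0}+(u-u^{0})$ and use $-\Delta u^{0}=f$ and $\partial_{\mathbf{n}}u|_{\partial\Omega}=0$: the $u^{0}$-part collapses to $b_{h}(u^{0},w_{h})+(\Delta u^{0},w_{h})$, bounded by Lemma~\ref{lem:consisRM} against $h\|u^{0}\|_{2,\Omega}\lesssim h^{1/2}\|f\|_{0,\Omega}$, plus $(f,\Pi^{\rm b}_{h0}w_{h}-w_{h})\lesssim h\|f\|_{0,\Omega}|w_{h}|_{1,h}$; the $(u-u^{0})$-part, after cell-wise integration by parts, becomes $\sum_{K}\int_{\partial K}\partial_{\mathbf{n}}(u-u^{0})\,w_{h}$ (and the analogue against $\Pi^{\rm b}_{h0}w_{h}-w_{h}$), on whose boundary edges $\partial_{\mathbf{n}}(u-u^{0})=-\partial_{\mathbf{n}}u^{0}$ has $L^{2}(\partial\Omega)$-norm $\lesssim\|u^{0}\|_{2,\Omega}$, and on whose interior edges I integrate by parts tangentially — using $\fint_{e}\jump{\partial_{\boldsymbol{\tau}}w_{h}}=0$ from vertex-continuity — and apply the multiplicative trace inequality $\|g\|_{0,\partial K}^{2}\lesssim h_{K}^{-1}\|g\|_{0,K}^{2}+\|g\|_{0,K}|g|_{1,K}$ with $g=\partial_{\mathbf{n}}(u-u^{0})$, thereby pairing the ``layer-small'' factor $|u-u^{0}|_{1,\Omega}\lesssim\varepsilon^{1/2}\|f\|_{0,\Omega}$ with the ``layer-large'' factor $|u-u^{0}|_{2,\Omega}\lesssim\varepsilon^{-1/2}\|f\|_{0,\Omega}$; every piece is then $\lesssim h^{1/2}\|f\|_{0,\Omega}\interleave w_{h}\interleave_{\varepsilon,h}$. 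Feeding these back into \eqref{eq:StrangRRM} closes the case $\varepsilon<h$.

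The delicate point is precisely the regime $\varepsilon<h$: a blunt combination of Theorems~\ref{thm:errorRRM} and \ref{thm:approxH02} with Lemma~\ref{lem:regularity} only produces $h\varepsilon^{-1/2}\|f\|_{0,\Omega}$, which fails to be $\mathcal{O}(h^{1/2})$ once $\varepsilon\ll h$. What makes the argument go through is (i) the boundary-layer refinement of the interpolation error, which exploits that $\Pi_{h0}$ loses accuracy only on an $\mathcal{O}(h)$-wide strip, and (ii) never estimating $h\,|u-u^{0}|_{2,\Omega}$ on its own in the consistency terms but systematically pairing $|u-u^{0}|_{2,\Omega}$ with $|u-u^{0}|_{1,\Omega}$ via a sharp multiplicative trace inequality and a tangential Poincaré step built on the weak continuity of the RRM element. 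The remaining ingredients — the case $\varepsilon\ge h$, the term $R_{1}$, and the $u^{0}$-portions of $R_{2},R_{3}$ — are routine once the consequences of Lemma~\ref{lem:regularity} are in place.
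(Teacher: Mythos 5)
Your proposal is correct and follows the same skeleton as the paper's proof: Strang's lemma, the boundary-layer splitting $u=u^{0}+(u-u^{0})$ combined with Lemma~\ref{lem:regularity} and $|u^{0}|_{2,\Omega}\lesssim\|f\|_{0,\Omega}$, the second bound in \eqref{eq:R1} for $R_{1}$, the identity $\varepsilon^{2}\Delta^{2}u=\Delta(u-u^{0})$ for $R_{2}+R_{3}$, and a dichotomy on $\varepsilon$ versus $h$. The differences are organizational but worth recording. (i) You dispatch the entire regime $\varepsilon\ge h$ at once from Theorem~\ref{thm:errorRRM} via $h\varepsilon^{-1/2}=(h/\varepsilon)^{1/2}h^{1/2}\le h^{1/2}$, whereas the paper only splits cases inside the estimate of $R_{2}+R_{3}$; your shortcut is clean and shortens the argument. (ii) For the approximation error you claim only $|u^{0}-\Pi_{h0}u^{0}|_{1,h}\lesssim h^{1/2}\|u^{0}\|_{2,\Omega}$, obtained from interior-cell accuracy plus a boundary-strip estimate, while the paper invokes \eqref{eq:H1norm(u-Piuh)} to get the full rate $h|u^{0}|_{2,\Omega}$ by citing Theorem~\ref{thm:approxH02}; since that theorem's $s=2$ case is stated for $H^{2}_{0}$ functions and $u^{0}$ only lies in $H^{1}_{0}\cap H^{2}$ (its normal derivative does not vanish on $\partial\Omega$), your more conservative $h^{1/2}$ claim is actually the safer one, and it still suffices for the final $h^{1/2}\|f\|_{0,\Omega}$ bound. (iii) For $R_{2}+R_{3}$ with $\varepsilon\le h$ the paper recombines terms so that the layer contribution appears as the volume pairing $\sum_{K}\int_{K}\nabla(u-u^{0})\cdot\nabla(w_{h}-\Pi^{\rm b}_{h0}w_{h})$, bounded directly by $|u-u^{0}|_{1,\Omega}|w_{h}|_{1,h}\lesssim\varepsilon^{1/2}\|f\|_{0,\Omega}|w_{h}|_{1,h}$; you instead keep the edge functional $\sum_{K}\int_{\partial K}\partial_{\mathbf{n}}(u-u^{0})\,w_{h}$ and control it by a multiplicative trace inequality paired with the tangential Poincar\'e step $\|\jump{w_{h}}\|_{0,e}\lesssim h_{e}^{1/2}|w_{h}|_{1,\omega_{e}}$, arriving at the factor $\bigl(|u-u^{0}|_{1,\Omega}^{2}+h|u-u^{0}|_{1,\Omega}|u-u^{0}|_{2,\Omega}\bigr)^{1/2}\lesssim h^{1/2}\|f\|_{0,\Omega}$. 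Both routes exploit exactly the same cancellation (never letting $h|u-u^{0}|_{2,\Omega}$ stand alone); the paper's volume pairing is slightly more economical, while yours avoids having to rewrite $R_{2}$ against $\Pi^{\rm b}_{h0}w_{h}$ first. The only place your bookkeeping should be tightened in a write-up is the split of $R_{3}$: the piece paired with $\Pi^{\rm b}_{h0}w_{h}-w_{h}$ must be kept as $\Delta(u-u^{0})$ and integrated by parts (volume term controlled by $|u-u^{0}|_{1,\Omega}$, edge term by the same trace pairing), not estimated crudely through $\|\Delta u\|_{0,\Omega}\lesssim\varepsilon^{-1/2}\|f\|_{0,\Omega}$, which would lose the required half power of $h$.
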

\begin{proof}
From Theorem~\ref{thm:approxH02}, we have
$\big|u-\Pi_{h0}u\big|_{2,h}^{2} \lesssim |u|_{2,\Omega}\big|u-\Pi_{h0}u\big|_{2,h}  \lesssim h |u|_{2,\Omega} |u|_{3,\Omega}.$ By Lemma ~\ref{lem:regularity}, we further obtain
\begin{align}\label{eq:H2norm(u-Piuh)}
\varepsilon^{2}\big|u-\Pi_{h0}u\big|_{2,h}^{2} \lesssim h \varepsilon^{\frac{1}{2}}|u|_{2,\Omega} \varepsilon^{\frac{3}{2}}|u|_{3,\Omega}  \lesssim h \|f\|_{0,\Omega}^{2}.
\end{align}
From \cite[Theorem 3.2.1.2] {Grisvard1985}, $|u^{0}|_{2,\Omega} \lesssim \|f\|_{0,\Omega}$. This, together with Lemma ~\ref{lem:regularity}, leads to 
\begin{equation}\label{eq:H1norm(u-Piuh)}
\begin{split}
\big|u-\Pi_{h0}u\big|_{1,h}^{2}  & \lesssim \big|u-u^{0}- \Pi_{h0}(u-u^{0})\big|_{1,h}^{2} + \big|u^{0}-\Pi_{h0}u^{0}\big|_{1,h}^{2} \\
 & \lesssim \big|u-u^{0}\big|_{1,\Omega}h\big|u-u^{0}\big|_{2,\Omega} + h^{2}\big|u^{0}\big|_{2,\Omega}^{2} \\
 & \lesssim h(\varepsilon^{-\frac{1}{2}} \big|u-u^{0}\big|_{1,\Omega})(\varepsilon^{\frac{1}{2}} \big|u-u^{0}\big|_{2,\Omega}) + h^{2}\big|u^{0}\big|_{2,\Omega}^{2} \\
 & \lesssim h\|f\|_{0,\Omega}^{2} + h^{2}\|f\|_{0,\Omega}^{2}  \lesssim h\|f\|_{0,\Omega}^{2}.
 \end{split}
\end{equation}
From \eqref{eq:H2norm(u-Piuh)} and \eqref{eq:H1norm(u-Piuh)}, we obtain
\begin{align}\label{eq:unifrom approx error}
\inf_{v_{h}\in V_{h0}^{\rm{R}}} \interleave u - v_{h} \interleave_{\varepsilon,h} \leq \interleave u - \Pi_{h0}u \interleave_{\varepsilon,h} \lesssim h^{\frac{1}{2}}\|f\|_{0,\Omega}.
\end{align}
Owing to the second estimate in \eqref{eq:R1}, it yields that
$$
R_{1} \lesssim  \big(\varepsilon^{\frac{3}{2}} h^{\frac{1}{2}} |u|_{3,\Omega} + \varepsilon h^{\frac{1}{2}} |u|_{2,\Omega}^{\frac{1}{2}}|u|_{3,\Omega}^{\frac{1}{2}} \big)\interleave w_{h} \interleave_{\varepsilon,h} \lesssim h^{\frac{1}{2}}\|f\|_{0,\Omega}\interleave w_{h} \interleave_{\varepsilon,h}.
$$
By \eqref{eq:model problem} and \eqref{eq:2nd problem},  $\varepsilon^{2} \Delta^{2}u = \Delta(u-u^{0})$.
When $h< \varepsilon$, we have, by Lemmas \ref{lem:regularity} and \ref{lem:consisRM}, that
\begin{equation*}
\begin{split}
R_{2}+R_{3} & =  \sum_{K\in \mathcal{G}_{h}} \int_{K} (\nabla u \cdot \nabla w_{h} + \Delta u \ w_{h}) + \varepsilon^{2}\int_{\Omega} \Delta^{2} u \ (\Pi_{h0}^{\rm{b}} w_{h} - w_{h})  \\
& =  \sum_{K\in \mathcal{G}_{h}} \int_{K} (\nabla u \cdot \nabla w_{h} + \Delta u \ w_{h}) + \int_{\Omega} \Delta(u-u^{0}) (\Pi_{h0}^{\rm{b}} w_{h} - w_{h}) \\
& \lesssim h|u|_{2,\Omega}|w_{h}|_{1,h} + h|u-u^{0}|_{2,\Omega}|w_{h}|_{1,h} \\
& \lesssim h^{\frac{1}{2}} \varepsilon^{\frac{1}{2}}|u|_{2,\Omega}|w_{h} |_{1,h} + h^{\frac{1}{2}} \varepsilon^{\frac{1}{2}}(|u|_{2,\Omega}+|u^{0}|_{2,\Omega}) |w_{h} |_{1,h}   \lesssim h^{\frac{1}{2}}\|f\|_{0,\Omega}\interleave w_{h} \interleave_{\varepsilon,h}.
\end{split}
\end{equation*}
When $\varepsilon \leq h$, noticing that $\Pi_{h0}^{\rm{b}} w_{h} \in H^{1}_{0}(\Omega)$, we obtain
\begin{equation*}
\begin{split}
R_{2}+R_{3} & =  \sum_{K\in \mathcal{G}_{h}} \int_{K} (\nabla u \cdot \nabla w_{h} + \Delta u \ w_{h}) + \int_{\Omega} \Delta(u-u^{0})(\Pi_{h0}^{\rm{b}} w_{h} - w_{h})  \\
& =  \sum_{K\in \mathcal{G}_{h}} \int_{K} (\nabla u \cdot \nabla (w_{h}-\Pi_{h0}^{\rm{b}} w_{h}) + \Delta u (w_{h}-\Pi_{h0}^{\rm{b}} w_{h})) - \int_{\Omega} \Delta(u-u^{0})(w_{h} -\Pi_{h0}^{\rm{b}} w_{h} )  \\
& =  \sum_{K\in \mathcal{G}_{h}} \int_{K} \nabla u \cdot \nabla (w_{h}-\Pi_{h0}^{\rm{b}} w_{h})  + \int_{\Omega} \Delta u^{0} (w_{h} -\Pi_{h0}^{\rm{b}} w_{h} ) \\
& = \sum_{K\in \mathcal{G}_{h}} \int_{K} \nabla (u-u^{0}) \cdot \nabla (w_{h}-\Pi_{h0}^{\rm{b}} w_{h}) + \sum_{K\in \mathcal{G}_{h}} \int_{\partial K} \frac{\partial u^{0}}{\partial \mathbf{n}} (w_{h}-\Pi_{h0}^{\rm{b}} w_{h}) \\
& \lesssim \varepsilon^{-\frac{1}{2}} |u-u^{0}|_{1,\Omega}  \varepsilon^{\frac{1}{2}} |w_{h}|_{1,h} + h|u^{0}|_{2,\Omega}|w_{h}|_{1,h} \\
& \lesssim \|f\|_{0,\Omega}h^{\frac{1}{2}} |w_{h}|_{1,h}  + h\|f\|_{0,\Omega} |w_{h}|_{1,h} \lesssim h^{\frac{1}{2}}\|f\|_{0,\Omega} \interleave w_{h} \interleave_{\varepsilon,h},
\end{split}
\end{equation*}
where Lemma~\ref{lem:consisRM} is utilized to estimate the term $\sum_{K\in \mathcal{G}_{h}} \int_{\partial K} \frac{\partial u^{0}}{\partial \mathbf{n}} (w_{h}-\Pi_{h0}^{\rm{b}} w_{h})$.
Hence we obtain that
\begin{align}\label{eq:unifrom consis error}
E_{\varepsilon,h}(u,w_{h}) = R_{1}+R_{2}+R_{3} \lesssim h^{\frac{1}{2}}\|f\|_{0,\Omega} \interleave w_{h} \interleave_{\varepsilon,h}.
\end{align}
Combining \eqref{eq:StrangRRM}, \eqref{eq:unifrom approx error}, and \eqref{eq:unifrom consis error}, the uniform estimate is finally obtained.
\end{proof}

\section{Numerical experiments}
\label{sec:numerical}
We consider both uniform subdivisions and non-uniform subdivisions of $\Omega$; see~Figure~\ref{fig:non-uniformgrid}. Numerical examples of the model problem~\eqref{eq:model problem} are given below. 
\begin{figure}[!htbp]
\centering
\includegraphics[height=0.29\hsize]{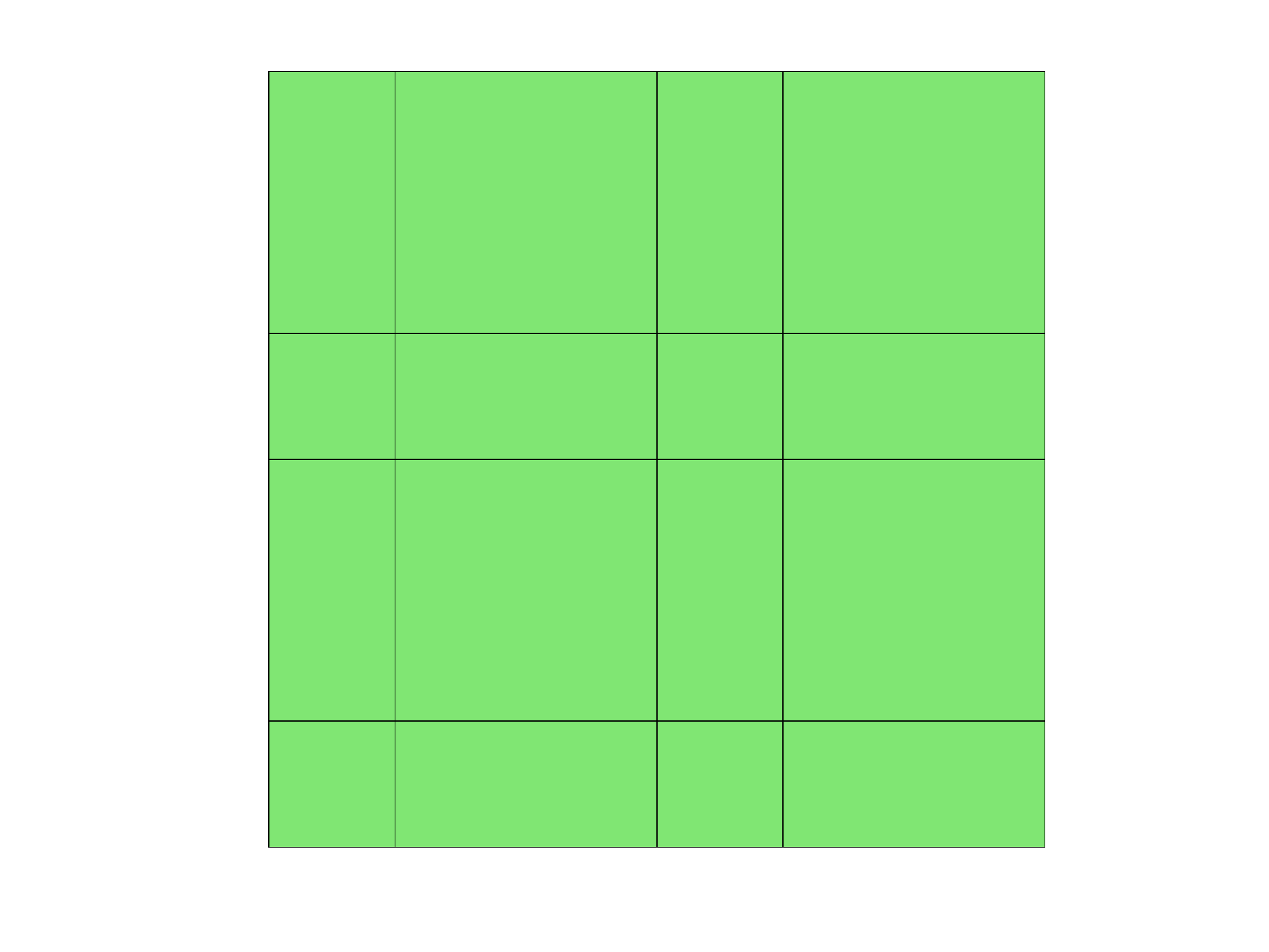}
\qquad
\includegraphics[height=0.29\hsize]{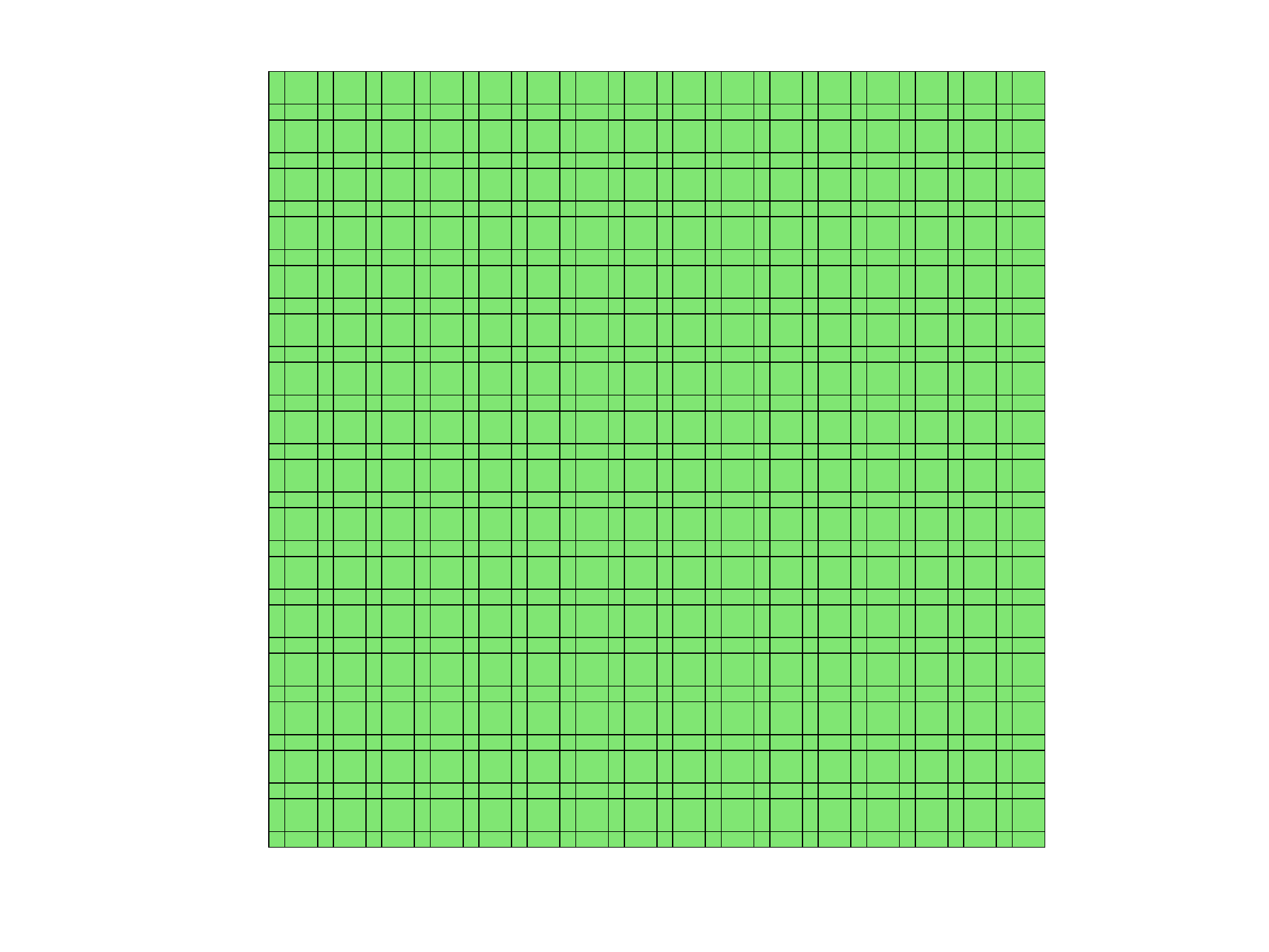}
\caption{Illustration of a non-uniform shape regular subdivision. The partition in the right is a combination of small patterns as the left one.}\label{fig:non-uniformgrid}
\end{figure}

\vspace{0.1cm}
\noindent{\bf Example 1}
 
\noindent Let $\Omega=(0,1)^{2}$. Take $u = \big(sin(\pi x)sin(\pi y)\big)^{2}$, and set $f = \varepsilon^{2} \Delta^{2}u - \Delta u$. Then $u$ is the solution of problem~\eqref{eq:model problem} when $\varepsilon > ~0$. Apply \eqref{eq:discrete form RRM}  to get the discrete solution $u_{h}^{\rm{R}}$ on uniform or non-uniform meshes, and compute the relative error $\frac{\interleave u-u_{h}^{\rm{R}} \interleave_{\varepsilon,h}}{\interleave u \interleave_{\varepsilon,h}}$. From the experiment result of the non-uniform case shown in Table~\ref{tab:non-uniform Example1}, the convergence rate is $\mathcal{O}(h)$ for $0<\varepsilon \leq1$. From the experiment result of the uniform case shown in Table~\ref{tab:uniform Example1}, the convergence rate is $\mathcal{O}(h)$ when $\varepsilon = \mathcal{O}(1)$ and $\mathcal{O}(h^{2})$ when $\varepsilon \ll 1 $. Both cases verify the theoretical findings in Theorem \ref{thm:errorRRM}.

\vspace{0.1cm}
\noindent{\bf Example 2}: 

Let $\Omega = (0,2)^{2}\backslash [1,2]^{2}$. Take the same $u$ as in {\bf Example 1}. From Tables~\ref{tab:non-uniform Example2} and \ref{tab:uniform Example2}, the convergence rates on the L-shaped domain are consistent with the results derived on $\Omega = (0,1)^{2}$.  It verify the theoretical findings, especially the results in Theorem~\ref{thm:approxH02}, which shows that the interpolating properties are valid for non-convex domains. 

\vspace{0.1cm}
\noindent{\bf Example 3}
 
 \noindent  Let $\Omega=(0,1)^{2}$. Consider \eqref{eq:model problem} with $f = 2\pi^2sin(\pi x)sin(\pi y)$. It is proposed in \cite{Wang;Huang;Tang;Zhou2018} that the exact solution with this right hand term possesses strong boundary layers when $\varepsilon$ is very small. The explict expression of $u$ is unknown, but the exact solution of \eqref{eq:2nd problem} reads $u^{0} = sin(\pi x)sin(\pi y)$. From \cite[(3.29)]{Wang;Huang;Tang;Zhou2018}, it holds that $\interleave u^{0}-u_{h}^{\rm{R}} \interleave_{\varepsilon,h} \lesssim (\varepsilon^{\frac{1}{2}}+h^{\frac{1}{2}}) \|f\|_{0,\Omega}$ under the same assumptions in Theorem \ref{thm:uniform convergence}. Here we take $\varepsilon$ to be small enough, such that $\varepsilon < h$. From Tables~\ref{tab:non-uniform Example3} and~\ref{tab:uniform Example3}, the convergence rate of the relative error $\frac{\interleave u^{0}-u_{h}^{\rm{R}} \interleave_{\varepsilon,h}}{\interleave u^{0} \interleave_{\varepsilon,h}}$ is $\mathcal{O}(h^{\frac{1}{2}})$, which verify the uniform convergence of the RRM element in Theorem~\ref{thm:uniform convergence}.
\begin{table}[h!!]
	\caption{Relative error $\frac{\interleave u-u_{h}^{\rm{R}} \interleave_{\varepsilon,h}}{\interleave u \interleave_{\varepsilon,h}}$ on non-uniform grids in Example 1.}
	\label{tab:non-uniform Example1}
	\begin{tabular}{lllllll}
		\hline\noalign{\smallskip}	
		$\varepsilon \setminus h$ & 3.250e-1 & 1.625e-1 & 8.125e-2  &  4.063e-2  & 2.031e-2 & Rate \\
		\noalign{\smallskip}\hline\noalign{\smallskip}
		$2^{0}$ & 0.6196  & 0.3127 & 0.1556  & 0.0776 & 0.0388 & 1.00 \\
		$2^{-2}$ & 0.5691 & 0.2798 & 0.1380 & 0.0686 & 0.0343 & 1.01 \\
		$2^{-4}$ & 0.3691 & 0.1597 & 0.0699 & 0.0330 & 0.0162 & 1.13 \\
		$2^{-6}$ & 0.2825 &	0.1318 & 0.0553  & 0.0192 & 0.0064 & 1.37 \\
		$2^{-8}$ & 0.2746 & 0.1337 & 0.0664  & 0.0311 & 0.0127 & 1.10 \\
		$2^{-10}$ & 0.2741 & 0.1339 & 0.0676 & 0.0338 & 0.0166 & 1.01 \\
		\noalign{\smallskip}\hline
	\end{tabular}
\end{table}
\begin{table}[h!!]
\caption{Relative error $\frac{\interleave u-u_{h}^{\rm{R}} \interleave_{\varepsilon,h}}{\interleave u \interleave_{\varepsilon,h}}$ on uniform grids in Example 1.}
\label{tab:uniform Example1}
\begin{tabular}{lllllll}
\hline\noalign{\smallskip}
$\varepsilon \setminus h$ & $2^{-2}$ & $2^{-3}$ &  $2^{-4}$  &  $2^{-5}$  & $2^{-6}$ & Rate \\
\noalign{\smallskip}\hline\noalign{\smallskip}
$2^{0}$ & 0.5403  & 0.2754 & 0.1376  & 0.0688 & 0.0344 & 0.99 \\
$2^{-2}$ & 0.4890 & 0.2448 & 0.1218 & 0.0608 & 0.0304 & 1.00 \\
$2^{-4}$ & 0.2926 & 0.1238 & 0.0585 & 0.0288 & 0.0144 & 1.08 \\
$2^{-6}$ & 0.2080 & 0.0585 & 0.0199  & 0.0084 & 0.0040 & 1.42 \\
$2^{-8}$ & 0.2002 & 0.0502 & 0.0130  & 0.0037 & 0.0013 & 1.84 \\
$2^{-10}$ & 0.1996 & 0.0496 & 0.0124  &	0.0031 & 0.0008 & 1.99 \\
\noalign{\smallskip}\hline
 \end{tabular}
\end{table}
\begin{table}[h!!]
	\caption{Relative error $\frac{\interleave u-u_{h}^{\rm{R}} \interleave_{\varepsilon,h}}{\interleave u \interleave_{\varepsilon,h}}$ on non-uniform grids in Example 2.}
	\label{tab:non-uniform Example2}
	\begin{tabular}{lllllll}
		\hline\noalign{\smallskip}	
		$\varepsilon \setminus h$ & 3.250e-1 & 1.625e-1 & 8.125e-2  &  4.063e-2  & 2.031e-2 & Rate \\
		\noalign{\smallskip}\hline\noalign{\smallskip}
		$2^{0}$ & 0.6236  & 0.3142 & 0.1558  & 0.0776 & 0.0388 & 1.00 \\
		$2^{-2}$ & 0.5722 & 0.2812 & 0.1382 & 0.0687 & 0.0343 & 1.02 \\
		$2^{-4}$ & 0.3711  & 0.1610 & 0.0700 & 0.0330 & 0.0162 & 1.13 \\
		$2^{-6}$ & 0.2863 &	0.1349 & 0.0556  & 0.0192 & 0.0064 & 1.38 \\
		$2^{-8}$ & 0.2787 & 0.1373 & 0.0668  & 0.0312 & 0.0127 & 1.11 \\
		$2^{-10}$ & 0.2782 & 0.1375 & 0.0681 & 0.0338 & 0.0166 & 1.02 \\
		\noalign{\smallskip}\hline
	\end{tabular}
\end{table}
\begin{table}[h!!]
\caption{Relative error $\frac{\interleave u-u_{h}^{\rm{R}} \interleave_{\varepsilon,h}}{\interleave u \interleave_{\varepsilon,h}}$ on uniform grids in Example 2.}
\label{tab:uniform Example2}
\begin{tabular}{lllllll}
\hline\noalign{\smallskip}
$\varepsilon \setminus h$ & $2^{-2}$ & $2^{-3}$ &  $2^{-4}$  &  $2^{-5}$  & $2^{-6}$ & Rate \\
\noalign{\smallskip}\hline\noalign{\smallskip}
$2^{0}$ & 0.5463  & 0.2763 & 0.1377  & 0.0688 & 0.0344 & 1.00 \\
$2^{-2}$ & 0.4938 & 0.2456 & 0.1219 & 0.0608 & 0.0304 & 1.01 \\
$2^{-4}$ & 0.2937 & 0.1242 & 0.0586 & 0.0288 & 0.0144 & 1.08 \\
$2^{-6}$ & 0.2077 & 0.0585 & 0.0200  & 0.0084 & 0.0040 & 1.42 \\
$2^{-8}$ & 0.1997 & 0.0502 & 0.0130  & 0.0037 & 0.0013 & 1.84 \\
$2^{-10}$ & 0.1991 & 0.0496 & 0.0124  &	0.0031 & 0.0008 & 1.98 \\
\noalign{\smallskip}\hline
 \end{tabular}
\end{table}
 \begin{table}[h!!]
 	\caption{Relative error $\frac{\interleave u^{0}-u_{h}^{\rm{R}} \interleave_{\varepsilon,h}}{\interleave u^{0} \interleave_{\varepsilon,h}}$ on non-uniform grids in Example 3.}
 	\label{tab:non-uniform Example3}
 	\begin{tabular}{lllllll}
 		\hline\noalign{\smallskip}
		$\varepsilon \setminus h$ & 3.250e-1 & 1.625e-1 & 8.125e-2  &  4.063e-2  & 2.031e-2 & Rate \\
 		\noalign{\smallskip}\hline\noalign{\smallskip}
 		$2^{-8}$ & 0.5846 & 0.3945 & 0.2755  & 0.1995 & 0.1555 & 0.48 \\
 		$2^{-10}$ & 0.5843 & 0.3934 & 0.2725  & 0.1912 & 0.1358 & 0.53 \\
 		$2^{-12}$ & 0.5843 & 0.3933 & 0.2723  &	0.1907 & 0.1343 & 0.53 \\
 		\noalign{\smallskip}\hline
 	\end{tabular}
 \end{table}
\begin{table}[h!!]
	\caption{Relative error $\frac{\interleave u^{0}-u_{h}^{\rm{R}} \interleave_{\varepsilon,h}}{\interleave u^{0} \interleave_{\varepsilon,h}}$ on uniform grids in Example 3.}
	\label{tab:uniform Example3}
	\begin{tabular}{lllllll}
		\hline\noalign{\smallskip}
		$\varepsilon \setminus h$ & $2^{-2}$ & $2^{-3}$ &  $2^{-4}$  &  $2^{-5}$  & $2^{-6}$ & Rate \\
		\noalign{\smallskip}\hline\noalign{\smallskip}
		$2^{-8}$ & 0.5731 & 0.3896 & 0.2743  & 0.1992 & 0.1549 & 0.47 \\
		$2^{-10}$ & 0.5728 & 0.3886 & 0.2714  & 0.1913 & 0.1362 & 0.52 \\
		$2^{-12}$ & 0.5728 & 0.3885 & 0.2712  &	0.1908 & 0.1347 & 0.52  \\
		\noalign{\smallskip}\hline
	\end{tabular}
\end{table}

\appendix

\section{When is a locally-defined interpolation projective?}
\label{sec:appendix}

In this section, a necessary and sufficient condition is proposed for the existence of a local projection interpolation of a finite element space. As an application, the theory shows there exists no interpolations defined by local information and preserving the RRM element space.

\subsection{A sufficient and necessary condition}

{
\noindent{\bf General Assumption }
Let $\mathcal{G}_{h}$ be a subdivision of $\Omega$.  Let $W_{h}$ be a finite element space with a set of linearly independent basis $\{\phi_{k}\}_{k = 1:n}$, where $n$ denotes the dimension of $W_{h}$. Denote the support of each basis function $\phi_{k}$ by $\mathcal{M}_{k}$. Let $\big\{\mathcal{D}_{k}\big\}_{k = 1:n}$ be a set of subdomains in $\Omega$.
}

\begin{lemma}\label{lem:general no dual}
Define an interpolation $I_{h}^{\rm d}$ as: 
$
 v \mapsto I_{h}^{\rm d}v= \sum\limits_{k = 1}^{n} \lambda_{k}^{\rm d}(v)\phi_{k}(x,y),
$
 where $\lambda_{k}^{\rm d}$ is a functional associated with $\phi_{k}$, and $\lambda_{k}^{\rm d}(v)$ is computed with the information of  $v$ on $\mathcal{D}_{k}$. For each $1\leqslant k \leqslant n$, we define $\Phi_{\mathcal{D}_{k}}: =\big\{\phi_{m}|_{\mathcal{D}_{k}}: \mathcal{M}_{m}\cap \mathring{\mathcal{D}_{k}}\ne \varnothing,\ 1\leqslant m \leqslant n\big\}$ and $\Phi_{\mathcal{D}_{k}}^{*} : = \Phi_{\mathcal{D}_{k}}\backslash \big\{ \phi_{k}\big|_{\mathcal{D}_{k}}\big\}$, where $\phi_{m}|_{\mathcal{D}_{k}}$ represents the restriction of $\phi_{m}$ on $\mathcal{D}_{k}$. If there exists some $1\leqslant k_{0}\leqslant n$, such that $\phi_{k_{0}}|_{\mathcal{D}_{k_{0}}}$ can be represented linearly by these functions in $\Phi_{\mathcal{D}_{k_{0}}}^{*}$, then $I_{h}^{\rm d}$ can not preserve the space $W_{h}$.
\end{lemma}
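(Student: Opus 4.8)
\emph{Proof plan.} The plan is to argue by contradiction, using only elementary linear algebra together with the defining locality of the functionals $\lambda_k^{\rm d}$. Throughout I use two standing facts: each $\lambda_k^{\rm d}$ is a linear functional (implicit in calling $I_h^{\rm d}$ an interpolation operator), and, by hypothesis, $\lambda_k^{\rm d}(v)$ is determined by $v|_{\mathcal{D}_k}$ alone, so that $\lambda_k^{\rm d}(v_1)=\lambda_k^{\rm d}(v_2)$ whenever $v_1$ and $v_2$ agree on $\mathcal{D}_k$ (null sets are harmless here, since for concrete functionals such as those in Definition~\ref{def:interpolationVh0} the value is assembled from integral means over subregions of $\mathcal{D}_k$).

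Assume, for contradiction, that $I_h^{\rm d}$ preserves $W_h$, i.e.\ $I_h^{\rm d}w=w$ for every $w\in W_h$. Let $k_0$ be an index as in the statement, so that there are coefficients $\{c_m\}$, the sum ranging over those $m\ne k_0$ with $\mathcal{M}_m\cap\mathring{\mathcal{D}_{k_0}}\ne\varnothing$, with $\phi_{k_0}|_{\mathcal{D}_{k_0}}=\sum_m c_m\,\phi_m|_{\mathcal{D}_{k_0}}$. Set $g:=\phi_{k_0}-\sum_m c_m\phi_m$. Then $g\in W_h$ because $\{\phi_k\}_{k=1:n}$ is a basis, and $g|_{\mathcal{D}_{k_0}}=0$ by construction. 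On the one hand, applying the reproduction property to $g$ and expanding gives $\sum_{k=1}^{n}\lambda_k^{\rm d}(g)\phi_k=I_h^{\rm d}g=g=\phi_{k_0}-\sum_m c_m\phi_m$; comparing the coefficient of $\phi_{k_0}$ on both sides, which is legitimate because $\{\phi_k\}_{k=1:n}$ is linearly independent and $k_0$ does not occur among the indices of the sum, forces $\lambda_{k_0}^{\rm d}(g)=1$. On the other hand, $g$ and the zero function agree on $\mathcal{D}_{k_0}$, so locality of $\lambda_{k_0}^{\rm d}$ together with its linearity yields $\lambda_{k_0}^{\rm d}(g)=\lambda_{k_0}^{\rm d}(0)=0$. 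The contradiction $0=1$ shows $I_h^{\rm d}$ cannot preserve $W_h$.

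I do not expect a genuine obstacle: the content is just the observation that a linear dependence among the \emph{restrictions} $\{\phi_m|_{\mathcal{D}_{k_0}}\}$ rules out any dual functional that reads off the $\phi_{k_0}$-coefficient from data on $\mathcal{D}_{k_0}$ alone. The only points deserving care are bookkeeping: (a) making sure the restriction identity transfers to the data actually seen by $\lambda_{k_0}^{\rm d}$ — this is precisely why $\Phi_{\mathcal{D}_k}$ is defined with the condition $\mathcal{M}_m\cap\mathring{\mathcal{D}_k}\ne\varnothing$, so basis functions supported away from $\mathring{\mathcal{D}_k}$ vanish (a.e.) on $\mathcal{D}_k$ and may be discarded; and (b) isolating which steps use linearity of the functional ($\lambda_{k_0}^{\rm d}(0)=0$ and the expansion of $\lambda_{k_0}^{\rm d}(g)$) versus only locality (the equality $\lambda_{k_0}^{\rm d}(g)=\lambda_{k_0}^{\rm d}(0)$), which I would state explicitly at the start. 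A remark afterwards can point out that the converse direction — constructing a local projective interpolation whenever no such $k_0$ exists, i.e.\ whenever $\phi_k|_{\mathcal{D}_k}\notin{\rm span}\,\Phi_{\mathcal{D}_k}^{*}$ for every $k$, by extending the well-defined $\phi_k$-coefficient map off $\mathcal{D}_k$-data — together with this lemma gives the announced necessary-and-sufficient characterization, and that applying it to the RRM basis $\{\varphi_K\}$ and the averaging domains of $\Pi_{h0}$ produces the claimed nonexistence result.
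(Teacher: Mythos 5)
Your proposal is correct and follows essentially the same route as the paper: both arguments reduce preservation of $W_h$ to the dual-basis conditions $\lambda_k^{\rm d}(\phi_j)=\delta_{kj}$ (using linearity of the functionals) and then show the linear dependence of the restrictions on $\mathcal{D}_{k_0}$, combined with locality, makes these conditions unsatisfiable. Your repackaging of the dependence into the single function $g=\phi_{k_0}-\sum_m c_m\phi_m$ vanishing on $\mathcal{D}_{k_0}$ is only a cosmetic rearrangement of the paper's coefficient-wise argument.
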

\begin{proof}
Since $\{\phi_{k}\}_{k = 1:n}$ is a linearly independent set, $I_{h}^{\rm d}(W_{h}) = W_{h}$ is equal to the following condition: for each functional $\lambda_{k}^{\rm d}$, $1\leqslant k \leqslant n$, it holds that 
\begin{align}\label{eq: dual property}
 \lambda_{k}^{{\rm d}}(\phi_{k}) = 1 \quad \mbox{and}\quad \lambda_{k}^{{\rm d}}(\phi_{j}) = 0, \quad \forall j \ne k.
\end{align}
From the assumption, there exists a set of coefficients $\{g_{j}\}$, such that 
$$\phi_{k_{0}}|_{\mathcal{D}_{k_{0}}}  = \sum\limits_{\phi_{j}|_{\mathcal{D}_{k_{0}}} \in \Phi_{\mathcal{D}_{k}}^{*}} g_{j}\phi_{j}|_{\mathcal{D}_{k_{0}}}.
$$
If $\lambda_{k_{0}}(\phi_{k_{0}})  = \lambda_{k_{0}}(\phi_{k_{0}}|_{\mathcal{D}_{k_{0}}}) =1$, then there exists some $j_{0}\ne k_{0}$, such that $ \lambda_{k_{0}}(\phi_{j_{0}}) = \lambda_{k_{0}}(\phi_{j_{0}}|_{\mathcal{D}_{k_{0}}}) \ne 0$. It indicates that \eqref{eq: dual property} does not holds for $\lambda_{k_{0}}^{{\rm d}}$. Therefore, $I_{h}^{\rm d}$ can not preserve the space $W_{h}$.
\end{proof}
\begin{theorem}\label{thm:existence of projection}
{\rm
There exists a set of functionals $\big\{\lambda_{k}^{\rm d}\big\}_{k = 1:n}$ such that $W_h$ is contained in the domain of $\lambda_k^{\rm d}{}'s$, and $\lambda_{k}^{\rm d}(v)$ only relies on the information of $v$ on $\mathcal{D}_{k}$, with which
$
I_{h}^{\rm d}v = \sum\limits_{k = 1}^{n} \lambda_{k}^{\rm d}(v)\phi_{k}(x,y)
$ 
defines an interpolation $I_{h}^{\rm d}$ satisfying $I_{h}^{\rm d}(W_{h})=W_{h}$, if and only if, on each $\mathcal{D}_{k}$, $\phi_{k}|_{\mathcal{D}_{k}}$ can not be represented linearly by functions in $\Phi_{\mathcal{D}_{k}}^{*}$}.
\end{theorem}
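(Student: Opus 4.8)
The plan is to prove the two implications separately, treating the ``only if'' direction as an immediate corollary of Lemma~\ref{lem:general no dual} and putting the substance into the ``if'' direction, which is a construction. Throughout I use the reformulation recorded in the proof of Lemma~\ref{lem:general no dual}: since $\{\phi_k\}_{k=1:n}$ is linearly independent, $I_h^{\rm d}$ preserves $W_h$ if and only if the biorthogonality relations~\eqref{eq: dual property} hold, that is, $\lambda_k^{\rm d}(\phi_j)=\delta_{kj}$ for all $j,k$.

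For the ``only if'' direction I argue by contraposition. Suppose that for some index $k_0$ the restriction $\phi_{k_0}|_{\mathcal{D}_{k_0}}$ lies in $\operatorname{span}\Phi_{\mathcal{D}_{k_0}}^{*}$. Then Lemma~\ref{lem:general no dual} applies directly and shows that no family of functionals $\{\lambda_k^{\rm d}\}$ with $\lambda_k^{\rm d}(v)$ depending only on $v|_{\mathcal{D}_k}$ can yield an $I_h^{\rm d}$ preserving $W_h$. Hence, if such a family does exist, the non-representability must hold for every $k$.

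For the ``if'' direction I construct the functionals one index at a time. Fix $k$ and put $V_k:=\operatorname{span}\Phi_{\mathcal{D}_k}$ and $U_k:=\operatorname{span}\Phi_{\mathcal{D}_k}^{*}$, finite-dimensional spaces of functions on $\mathcal{D}_k$. The hypothesis already forces $\mathcal{M}_k\cap\mathring{\mathcal{D}_k}\ne\varnothing$ and $\phi_k|_{\mathcal{D}_k}\ne 0$, since otherwise $\phi_k|_{\mathcal{D}_k}$ would be the zero function, trivially in $\operatorname{span}\Phi_{\mathcal{D}_k}^{*}$. Because $\phi_k|_{\mathcal{D}_k}\notin U_k$, we get a direct-sum decomposition $V_k=U_k\oplus\mathbb{R}\,\phi_k|_{\mathcal{D}_k}$; let $\ell_k:V_k\to\mathbb{R}$ be the associated projection onto the one-dimensional summand, so that $\ell_k(\phi_k|_{\mathcal{D}_k})=1$ and $\ell_k\equiv 0$ on $U_k$. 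I then set $\lambda_k^{\rm d}(v):=\ell_k(v|_{\mathcal{D}_k})$. This is well defined on $W_h$: writing $v=\sum_m c_m\phi_m$, every $\phi_m$ with $\mathcal{M}_m\cap\mathring{\mathcal{D}_k}=\varnothing$ vanishes on $\mathcal{D}_k$, so $v|_{\mathcal{D}_k}\in V_k$; and by construction $\lambda_k^{\rm d}$ uses only the information of $v$ on $\mathcal{D}_k$. (If one prefers $\lambda_k^{\rm d}$ defined on a larger space such as $L^{1}(\Omega)$, extend $\ell_k$ from the finite-dimensional $V_k$ to a bounded functional on $L^{2}(\mathcal{D}_k)$ by choosing any complement; nothing below changes.)

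It remains to check the biorthogonality. For $j=k$ we have $\lambda_k^{\rm d}(\phi_k)=\ell_k(\phi_k|_{\mathcal{D}_k})=1$. For $j\ne k$, either $\mathcal{M}_j\cap\mathring{\mathcal{D}_k}=\varnothing$, in which case $\phi_j|_{\mathcal{D}_k}=0$ and $\lambda_k^{\rm d}(\phi_j)=0$, or $\phi_j|_{\mathcal{D}_k}\in\Phi_{\mathcal{D}_k}^{*}\subset U_k$, in which case $\ell_k(\phi_j|_{\mathcal{D}_k})=0$; either way $\lambda_k^{\rm d}(\phi_j)=0$. Thus~\eqref{eq: dual property} holds for every $k$, so $I_h^{\rm d}$ preserves $W_h$, and the construction is complete. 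I expect the only points requiring care rather than ideas to be (i) the elementary support/continuity fact that $\phi_m$ vanishes on $\mathcal{D}_k$ when $\mathcal{M}_m\cap\mathring{\mathcal{D}_k}=\varnothing$ (to be read a.e.\ if the $\phi_m$ are merely integrable), and (ii) keeping the bookkeeping honest so that $U_k$ genuinely omits $\phi_k|_{\mathcal{D}_k}$ and the direct sum is legitimate; that step is precisely where the hypothesis enters and is the heart of the theorem.
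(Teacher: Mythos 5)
Your proposal is correct and follows essentially the same route as the paper: the ``only if'' direction is Lemma~\ref{lem:general no dual}, and the ``if'' direction builds, for each $k$, a functional on $\operatorname{span}\Phi_{\mathcal{D}_{k}}$ that equals $1$ on $\phi_{k}|_{\mathcal{D}_{k}}$ and vanishes on $\Phi_{\mathcal{D}_{k}}^{*}$, which is exactly what the paper realizes concretely via an $L^{2}(\mathcal{D}_{k})$-dual basis extended by zero. Your abstract direct-sum formulation $V_{k}=U_{k}\oplus\mathbb{R}\,\phi_{k}|_{\mathcal{D}_{k}}$ even sidesteps, a bit more cleanly, the paper's intermediate claim that the functions outside a maximal independent subset lie in the span of $\Phi_{\mathcal{D}_{k}}^{*}$ alone.
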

\begin{proof}
The necessity is derived from Lemma~\ref{lem:general no dual}. We only have to verify the sufficiency. The construction is similar with the idea of establishing $L^{2}$ average interpolations  (see, v.g. \cite{Scott;Zhang1990}). 
For each $\mathcal{D}_{k}$, recall that $\Phi_{\mathcal{D}_{k}} =\big\{\phi_{m}|_{\mathcal{D}_{k}}: \mathcal{M}_{m}\cap \mathring{\mathcal{D}_{k}}\ne \varnothing,\ 1\leqslant m \leqslant n\big\}$. Suppose $\#(\Phi_{\mathcal{D}_{k}}) = n_{k}$, and restate $\Phi_{\mathcal{D}_{k}}$ as $\big\{\phi_{i}^{k}\big\}_{i = 1:n_{k}}$, where especially we let $\phi_{1}^{k} = \phi_{k}\big|_{\mathcal{D}_{k}}$.
Let $\{\phi_{1}^{k} , \ \phi_{2}^{k},\  \cdots, \phi_{m_{k}}^{k}\}$ be a set of maximal linearly independent groups of $\big\{\phi_{i}^{k}\big\}_{i = 1:n_{k}}$. 
We denote by $\big\{\psi_{i}^{k}\big\}_{i = 1:m_{k}}$, a $L^{2}(\mathcal{D}_{k})$-dual basis of $\big\{\phi_{i}^{k}\big\}_{k = 1:m_{k}}$. It satisfy
$
\int_{\mathcal{D}_{k}} \psi_{s}^{k}  \phi_{t}^{k}  = \delta_{st}, \ s,\ t = 1,\ \cdots, \ m_{k},
$
where $\delta_{st}$ is the Kronecker delta. 
We only concern the dual of $\phi_{1}^{k}$, therefore,
\begin{align*}
\int_{\mathcal{D}_{k}} \psi_{1}^{k}  \phi_{t}^{k}  = \delta_{1t}, \quad t = 1,\ \cdots, \ n_{k},
\end{align*}
where we utilize the fact that $\phi_{1}^{k} = \phi_{k}|_{\mathcal{D}_{k}}$ can not be represented linearly by $\big\{\phi_{i}^{k}\big\}_{i = 2:n_{k}}$, which indicates if $m_{k}<n_{k}$, then $\{\phi_{m_{k}+1}^{k} , \  \cdots, \phi_{n_{k}}^{k}\}$ can be represented linearly by $\{\phi_{2}^{k} , \ \phi_{2}^{k},\  \cdots, \phi_{m_{k}}^{k}\}$. 
Let $\psi_{k}$ be an zero extern of $\psi_{1}^{k}$ from $\mathcal{D}_{k}$ to the whole domain. Therefore we obtain a set of dual basis $\{\psi_{k}\}_{k = 1:n}$, which satisfies 
\begin{align*}
\int_{\Omega} \psi_{s}  \phi_{t}   = \int_{\mathcal{D}_{s}} \psi_{s}  \phi_{t}  =\delta_{st}, \quad s,\ t = 1,\ \cdots n.
\end{align*}
Let $\lambda_{k}^{\rm d}(v) := \int_{\Omega}\psi_{k}v {\,d x d y} $, and then $
I_{h}^{\rm d}v = \sum\limits_{k = 1}^{n} \lambda_{k}^{\rm d}(v)\phi_{k}(x,y)
$ 
defines an interpolation $I_{h}^{\rm d}$ that can preserve the space $W_{h}$. Hence the sufficiency is derived. 
\end{proof}

\subsubsection{Examples}
A very special case of this theorem is that, if every $\mathcal{D}_{k}$ is chosen as $\Omega$, then there must exist an interpolation preserve $W_{h}$, since these basis functions are linearly independent on $\Omega$. 

\paragraph{\bf Projective interpolations associated with the Crouzeix-Raviart element} Now we will take the Crouzeix-Raviart nonconforming linear element as a simple example to illustrate that, different choices of $\{\mathcal{D}_{k}\}$ in Theorem~\ref{thm:existence of projection} produce different functionals and thus different interpolations. Let $W_{h}^{\rm CR}$ be the Crouzeix-Raviart  element space defined on $\mathcal{G}_{h}$, see Figure~\ref{fig:CRmesh} ~(Left).  Let $\{\phi_{k}\}_{k=1:n}$ be set of basis of $W_{h}^{\rm CR}$. The following interpolations all satisfy $I_{h}^{\rm d}(W_{h}^{\rm CR}) = W_{h}^{\rm CR}$.
\begin{figure}
\centering
\includegraphics[height=0.35\hsize]{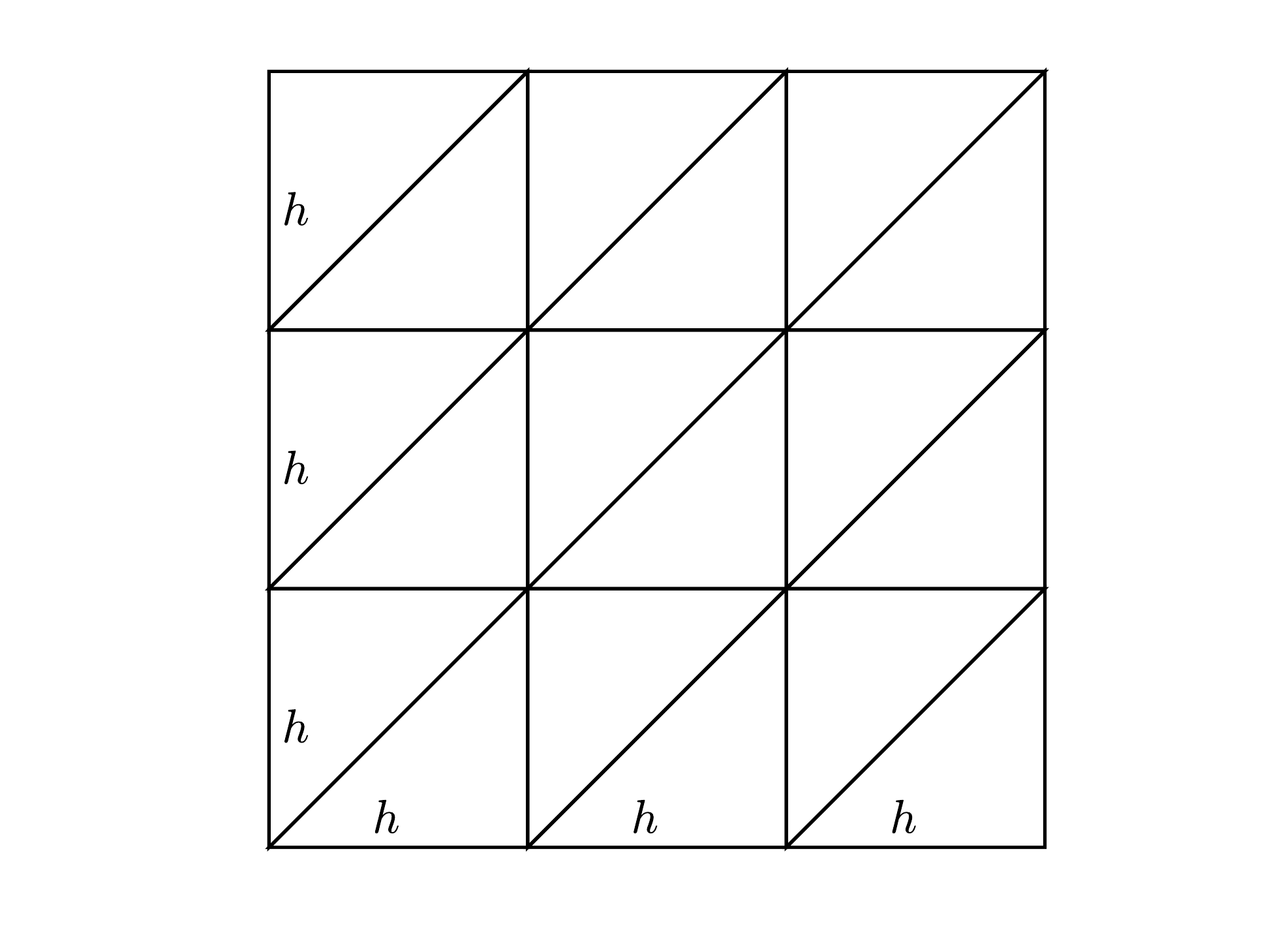}
\qquad
\includegraphics[height=0.35\hsize]{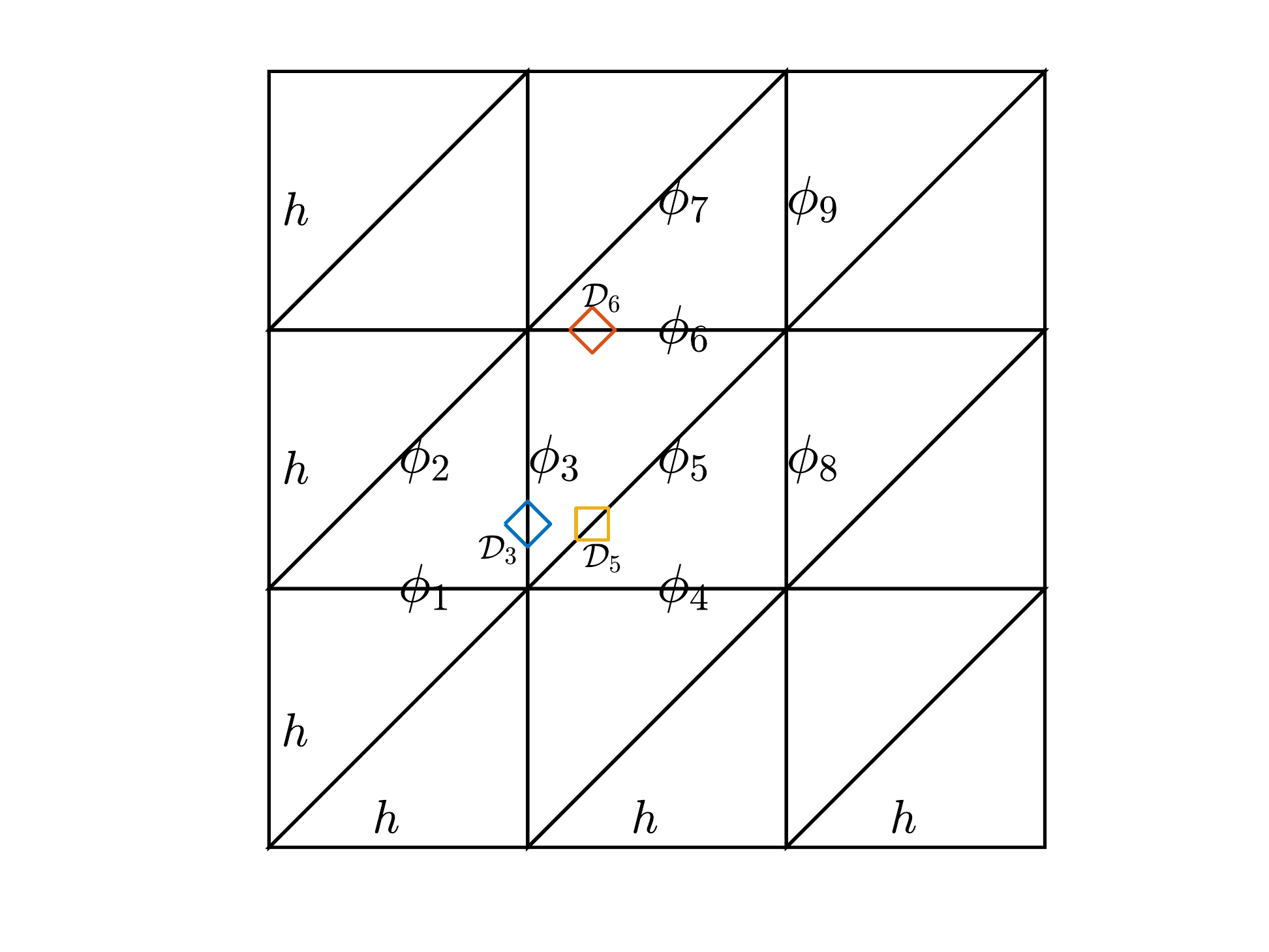}
\caption{A uniform triangular mesh (Left) and a selection of $\{\mathcal{D}_{k}\}_{k =1 :n}$ described in S3 (Right).}\label{fig:CRmesh}
\end{figure}
\begin{itemize}
\item[(S1)] For each $k$, $\mathcal{D}_{k}$ is selected as an arbitrarily small neighborhood containing each midpoint $b_{k}$ of each edge. Then we can alternatively define each functional by $\lambda_{k}(v) = v(b_{k})$. Then its corresponding interpolation is defined as 
$
I_{h}^{\rm d}(v) = \sum\limits_{k = 1}^{n}v(b_{k})\ \phi_{k},\mbox{ for any } v \in C^{0}(\Omega).
$
\item[(S2)] For each $k$, $\mathcal{D}_{k}$ is selected as an arbitrarily small neighborhood containing each edge $e_{k}$, which yields an alternatively choice of functional $\lambda_{k}(v) =  \fint_{e_{k}}v\,d s$. The corresponding interpolation is defined as 
$
I_{h}^{\rm d}(v) = \sum\limits_{k = 1}^{n} \big(\fint_{e_{k}}v\,d s\big)\ \phi_{k},$
for any
$ 
v \in H^{1}(\Omega).
$
\item[(S3)] For each $k$, $\mathcal{D}_{k}$ is selected as an arbitrarily small neighborhood containing quarter point $q_{k}$ of each edge. By Theorem~\ref{thm:existence of projection}, one can still find an $L^{2}$ average interpolation with $
\{\mathcal{D}_{k}\}_{k = 1:n}$. For instance, we choose each $\mathcal{D}_{k}$ as a rectangle with one diagonal on $e_{k}$ and the two diagonals intersecting at a quarter point of $e_{k}$; see Figure~\ref{fig:CRmesh} (Right). Let the side length of each rectangle be $\frac{h}{8\sqrt(2)}$. By the Riesz representation theorem, the $L^{2}$ dual basis $\psi_{3}$ of $\phi_{3}$, which satisfying  
$\int_{\mathcal{D}_{3}} \psi_{3}  \phi_{t}  =\delta_{3t} \ \mbox{ for any } t = 1:n$, is a linear combination of $\phi_{1}|_{\mathcal{D}_{3}}, \ \phi_{2}|_{\mathcal{D}_{3}},\ \phi_{3}|_{\mathcal{D}_{3}},\ \phi_{5}|_{\mathcal{D}_{3}},\ \phi_{6}|_{\mathcal{D}_{3}}$ when restricted on $\mathcal{D}_{3}$ and equals zero outside $\mathcal{D}_{3}$. The same is true for  $\psi_{5}, \ \psi_{6}$ and so on.
It can be computed that
\begin{align*}
& \psi_{3}|_{\mathcal{D}_{3}} = \frac{1}{h^{2}}[-18048,\ 6528,\ 12672,\ 6528,\ 31104]\cdot[\phi_{1}, \ \phi_{2},\ \phi_{3},\ \phi_{5},\ \phi_{6}]^{t}\big|_{\mathcal{D}_{3}}, \\
& \psi_{5}|_{\mathcal{D}_{5}} = \frac{1}{h^{2}}[q_{1},\ q_{2},\ 24960,\ q_{2},\ q_{1}]\cdot[\phi_{3}, \ \phi_{6},\ \phi_{5},\ \phi_{8},\ \phi_{4}]^{t}\big|_{\mathcal{D}_{5}}, \\
& \psi_{6}|_{\mathcal{D}_{6}} = \frac{1}{h^{2}}[-18048,\ 6528,\ 12672,\ 6528,\ 31104]\cdot[\phi_{3}, \ \phi_{5},\ \phi_{6},\ \phi_{7},\ \phi_{9}]^{t}\big|_{\mathcal{D}_{6}}
\end{align*}
where $ q_{1} = -384(8\sqrt{2} - 1), \ q_{2} = -384(8\sqrt{2} - 129)$.
The corresponding interpolation is defined as 
$
I_{h}^{\rm d}(v) = \sum\limits_{k = 1}^{n} \big(\int_{\mathcal{D}_{k}}\psi_{k} v {\,d x d y} \big)\ \phi_{k},\mbox{ for any } v \in L^{2}(\Omega).
$
\end{itemize}

\subsection{Non-existence of locally-defined projection associated with RRM element}
Next we consider the RRM element space $V_{h0}^{\rm R}$. Denote $\big\{\varphi_{K}\big\}_{K\in\mathcal{K}_{h}^{i}}$ as a linearly independent basis in $V_{h0}^{\rm R}$. 
\begin{lemma}\label{lem:linear dependent omega}
A subdomain $\omega$ is named as a completely subdomain of $\Omega$, if it is a union of elements in $\mathcal{G}_{h}$ and each element in $\omega$ is located in the supports of nine basis functions in $V_{h0}^{\rm R}$. Denote $\Phi_{\omega} : = \big\{ \varphi_{T}|_{\omega} : \mathcal{M}_{T}\cap \mathring{\omega} \ne \varnothing,\ T\in \mathcal{K}_{h}^{i}\big\}$. Then with a set of nonzero coefficients $\{d_{T} \in \mathcal{C}_{h}:\ T \subset \omega\}$, it holds that
\begin{align*}
\sum\limits_{\varphi_{T}|_{\omega}  \in \Phi_{\omega}} d_{T} L_{T}H_{T}\varphi_{T}|_{\omega} = 0.
\end{align*}
\end{lemma}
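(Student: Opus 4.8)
The plan is to obtain the identity by restricting to $\omega$ the global checkerboard relation already available for the whole domain; the only substantive point is to verify that this restriction annihilates every ``boundary-type'' term.

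First I would recall from Lemma~\ref{lem:property of phi}(c) that for any checkerboard coefficient set $\mathcal{C}_{\mathcal{J}_h}=\{d_K\}_{K\in\mathcal{J}_h}$, with each $d_K=\pm1$, one has $\sum_{K\in\mathcal{J}_h}d_KL_KH_K\varphi_K(x,y)=0$ for all $(x,y)\in\Omega$. Restricting this identity to $\omega$, a term disappears precisely when $\varphi_K|_\omega\equiv0$, i.e. when $\mathcal{M}_K$ shares no cell with $\omega$, and what remains is $\sum_{K\in\mathcal{J}_h,\ \mathcal{M}_K\cap\mathring\omega\neq\varnothing}d_KL_KH_K\varphi_K|_\omega=0$ on $\omega$. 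It then suffices to show that every surviving index $K$ already lies in $\mathcal{K}_h^i$: in that case the sum runs exactly over $\{T:\varphi_T|_\omega\in\Phi_\omega\}$, each coefficient $d_T=\pm1$ is nonzero, and the lemma follows, the $d_T$ being read off from $\mathcal{C}_{\mathcal{J}_h}$.

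The combinatorial claim ``$\mathcal{M}_K\cap\mathring\omega\neq\varnothing \Rightarrow K\in\mathcal{K}_h^i$'' is where the hypothesis that $\omega$ is a completely subdomain enters, and this is the step I would dwell on. If $\mathcal{M}_K\cap\mathring\omega\neq\varnothing$, then since $\mathring\omega$ is open, $\mathcal{M}_K$ has nonempty interior, and both are unions of (possibly fictitious) mesh cells, there must be a genuine mesh cell $C\subset\omega$ with $C\subseteq\mathcal{M}_K$, i.e. $C$ is one of the nine cells of the $3\times3$ patch centered at $K$. Since that relation is symmetric, $K\in\mathcal{M}_C$. But $C\subset\omega$ is located in the supports of nine basis functions of $V_{h0}^{\rm R}$, i.e. there are nine indices $T\in\mathcal{K}_h^i$ with $C\subseteq\mathcal{M}_T$, equivalently $T\in\mathcal{M}_C$; as $\mathcal{M}_C$ consists of exactly nine cells, this forces all nine cells of $\mathcal{M}_C$ into $\mathcal{K}_h^i$. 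Since $K\in\mathcal{M}_C$, we get $K\in\mathcal{K}_h^i$, which is the claim.

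The part I expect to cost the most care is precisely this combinatorial step, and within it the two ``obvious'' geometric facts: that $\mathcal{M}_K\cap\mathring\omega\neq\varnothing$ genuinely forces a full cell $C\subset\omega$ into $\mathcal{M}_K$ rather than a mere shared edge or vertex, and that $C\subseteq\mathcal{M}_K$ is symmetric in $C$ and $K$ even when $K$ is one of the fictitious expansion cells of $\mathcal{B}_h$ whose $3\times3$ patch reaches into $\Omega$ near $\partial\Omega$. Both are consequences of $\mathring\omega$ being open and the patches being honest $3\times3$ cell blocks, but they are what needs to be written out with care; the algebra itself is trivial.
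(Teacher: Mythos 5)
Your proof is correct and follows essentially the same route as the paper's: restrict the global checkerboard identity of Lemma~\ref{lem:property of phi}(c) to $\omega$ and observe that the surviving terms are exactly those indexed by $\Phi_\omega$. The paper's proof is just this two-line restriction and leaves the combinatorial step (that every surviving index lies in $\mathcal{K}_h^i$, which is where the ``completely subdomain'' hypothesis is used) implicit; your explicit verification of it is a welcome, correct elaboration rather than a different argument.
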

\begin{proof}
By Lemma~\ref{lem:property of phi}~$(c)$, there exists a checkerboard coefficients set $\mathcal{C}_{h} = \{d_{T}:\ T\in \mathcal{J}_{h}\}$, such that, for $(x,y) \in \omega$,
\begin{align*}
\sum\limits_{T\in \mathcal{J}_{h}} d_{T} L_{T}H_{T}\varphi_{T}(x,y) = \sum\limits_{\varphi_{T}|_{\omega}  \in \Phi_{\omega}} d_{T} L_{T}H_{T}\varphi_{T}(x,y) = 0,
\end{align*}
which yields that
$
\sum\limits_{\varphi_{T}|_{\omega}  \in \Phi_{\omega}} d_{T} L_{T}H_{T}\varphi_{T}|_{\omega} = 0.
$
\end{proof}
\begin{theorem}\label{thm:no dual basis RRM}
{Let $\{\mathcal{D}_{K}\}_{K \in \mathcal{K}_{h}^{i}}$ be a set of subdomains of $\Omega$. Let $\big\{\lambda_{K}^{\rm d}\big\}$ be a set of functionals, satisfying that $\lambda_{K}^{\rm d}(v)$ is computed with the information of $v$ within $\mathcal{D}_{K}$. Define an interpolation operator as
\begin{align*}
\Pi_{h}^{{\rm d}} : v \mapsto \Pi_{h}^{{\rm d}}v= \sum\limits_{K\in \mathcal{K}_{h}^{i}} \lambda_{K}^{{\rm d}}(v)\varphi_{K}(x,y).
\end{align*} 
If there exists some $\mathcal{D}_{K_{0}}$ that is a completely subdomain described in Lemma~\ref{lem:linear dependent omega}, then $\Pi_{h}^{\rm d}$ can not be a projection which satisfies $\Pi_{h}^{{\rm d}}(V_{h0}^{\rm R}) = V_{h0}^{\rm R}$. 
}
\end{theorem}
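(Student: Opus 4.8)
The plan is to derive the statement as an immediate consequence of Lemma~\ref{lem:general no dual}, by checking its hypothesis in the present configuration with the ``bad index'' $k_0$ taken to be $K_0$. Concretely, I would apply that lemma with $W_h = V_{h0}^{\rm R}$, with the linearly independent basis $\{\phi_k\}=\{\varphi_K\}_{K\in\mathcal{K}_h^i}$, with supports $\mathcal{M}_k=\mathcal{M}_K$, and with the given subdomains $\{\mathcal{D}_k\}=\{\mathcal{D}_K\}_{K\in\mathcal{K}_h^i}$; then the sets $\Phi_{\mathcal{D}_K}$ and $\Phi_{\mathcal{D}_K}^*$ are exactly those appearing there. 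It thus remains to show that, on $\mathcal{D}_{K_0}$, the restriction $\varphi_{K_0}|_{\mathcal{D}_{K_0}}$ is a linear combination of the functions in $\Phi_{\mathcal{D}_{K_0}}^*$.

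To produce such a representation I would invoke the checkerboard degeneracy of the RRM basis. Since $\mathcal{D}_{K_0}$ is a completely subdomain, Lemma~\ref{lem:linear dependent omega} applies with $\omega=\mathcal{D}_{K_0}$ and furnishes a checkerboard coefficient set $\mathcal{C}_h=\{d_T\}$, $d_T=\pm1$, with
\begin{equation*}
\sum_{\varphi_{T}|_{\mathcal{D}_{K_0}}\in\Phi_{\mathcal{D}_{K_0}}} d_{T}\,L_{T}H_{T}\,\varphi_{T}|_{\mathcal{D}_{K_0}} = 0 .
\end{equation*}
All scalar coefficients $d_T L_T H_T$ here are nonzero because $d_T=\pm1$ and $L_T,H_T>0$. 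If $\mathcal{M}_{K_0}\cap\mathring{\mathcal{D}_{K_0}}=\varnothing$, then $\varphi_{K_0}|_{\mathcal{D}_{K_0}}\equiv 0$ (as $\varphi_{K_0}$ is supported in $\mathcal{M}_{K_0}$), which is the trivial linear combination of $\Phi_{\mathcal{D}_{K_0}}^*$ — and already $\lambda_{K_0}^{\rm d}(\varphi_{K_0})=\lambda_{K_0}^{\rm d}(0)=0\neq1$, so no projection property can hold. Otherwise $\varphi_{K_0}|_{\mathcal{D}_{K_0}}\in\Phi_{\mathcal{D}_{K_0}}$ and it occurs in the displayed relation with the nonzero coefficient $d_{K_0}L_{K_0}H_{K_0}$; solving for that term,
\begin{equation*}
\varphi_{K_0}|_{\mathcal{D}_{K_0}} = -\frac{1}{d_{K_0}L_{K_0}H_{K_0}}\sum_{\substack{\varphi_{T}|_{\mathcal{D}_{K_0}}\in\Phi_{\mathcal{D}_{K_0}} \\ T\neq K_0}} d_{T}L_{T}H_{T}\,\varphi_{T}|_{\mathcal{D}_{K_0}},
\end{equation*}
exhibits $\varphi_{K_0}|_{\mathcal{D}_{K_0}}$ as a linear combination of functions in $\Phi_{\mathcal{D}_{K_0}}^*$.

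In either case the hypothesis of Lemma~\ref{lem:general no dual} is met with $k_0=K_0$, and I would conclude that $\Pi_h^{\rm d}$ cannot preserve $V_{h0}^{\rm R}$; equivalently, there is no choice of locally-defined functionals $\{\lambda_K^{\rm d}\}$ for which $\Pi_h^{\rm d}(V_{h0}^{\rm R})=V_{h0}^{\rm R}$. I do not expect any genuine obstacle in this argument: the only point requiring a moment's care is the bookkeeping that the coefficient attached to $\varphi_{K_0}$ in the checkerboard relation is nonzero (and the degenerate subcase $\varphi_{K_0}|_{\mathcal{D}_{K_0}}\equiv0$). The real content lies one level below, in the checkerboard identity of Lemma~\ref{lem:property of phi}(c)/Lemma~\ref{lem:linear dependent omega}, which says that the locally-supported RRM basis — though globally linearly independent — collapses to a linearly dependent family the instant it is restricted to a completely subdomain; it is precisely this loss of ``local linear independence'' that, through Theorem~\ref{thm:existence of projection}, forbids any local projection onto $V_{h0}^{\rm R}$.
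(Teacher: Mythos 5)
Your proposal is correct and follows essentially the same route as the paper: invoke Lemma~\ref{lem:linear dependent omega} on the completely subdomain $\mathcal{D}_{K_{0}}$ to obtain the checkerboard relation, solve for $\varphi_{K_{0}}|_{\mathcal{D}_{K_{0}}}$ using that its coefficient $d_{K_{0}}L_{K_{0}}H_{K_{0}}$ is nonzero, and conclude via Lemma~\ref{lem:general no dual} (equivalently the necessity direction of Theorem~\ref{thm:existence of projection}). Your explicit treatment of the degenerate subcase $\mathcal{M}_{K_{0}}\cap\mathring{\mathcal{D}}_{K_{0}}=\varnothing$ is a small extra care the paper leaves implicit, but it does not change the argument.
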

\begin{proof} 
From the assumption, $\mathcal{D}_{K_{0}}$ is a completely subdomain of $\Omega$. By Lemma~\ref{lem:linear dependent omega}, it holds that
\begin{align}\label{eq:linear dependent}
\sum\limits_{\varphi_{T}|_{\mathcal{D}_{K_{0}}}  \in \Phi_{\mathcal{D}_{K_{0}}}} d_{T} L_{T}H_{T}\varphi_{T}|_{\mathcal{D}_{K_{0}}} = 0.
\end{align}
Let $\Phi_{\mathcal{D}_{K_{0}}}^{*} = \Phi_{\mathcal{D}_{K_{0}}} \backslash (\varphi_{K_{0}}|_{\mathcal{D}_{K_{0}}})$.  Since $d_{K_{0}}\ne 0$, we obtain  
\begin{align*}
\varphi_{K_{0}}\big|_{ \mathcal{D}_{K_{0}} }  = \sum\limits_{\varphi_{T}|_{\mathcal{D}_{K_{0}}} \in \Phi_{\mathcal{D}_{K_{0}}}^{*}} g_{T}\varphi_{T}|_{ \mathcal{D}_{K_{0}}},
\end{align*}
where $\{g_{T}\}$ is derived by~\eqref{eq:linear dependent}. 
Therefore, from~Theorem~\ref{thm:existence of projection}, $\Pi_{h}^{\rm d}$ can not preserve $V_{h0}^{\rm R}$. 
\end{proof}
For the RRM element space, Theorem \ref{thm:no dual basis RRM} reveals that, there can be no interpolation preserving the space $V_{h0}^{\rm R}$ associated with the local basis functions, unless all functionals are computed with the global information of $v$.

\end{document}